\documentclass[letterpaper]{article}
\usepackage{aaai2026}
\usepackage{times}
\usepackage{helvet}
\usepackage{courier}
\usepackage{kotex}
\usepackage[hyphens]{url}
\usepackage{graphicx}
\usepackage{natbib}
\usepackage{caption}
\ifx\pdfoutput\undefined
\else
  \ifnum\pdfoutput>0
  \fi
\fi

\usepackage{amsmath,amssymb,amsfonts,amsthm,mathtools}
\usepackage{booktabs}
\usepackage{array}
\usepackage[colorlinks=true,linkcolor=blue,citecolor=blue,urlcolor=blue]{hyperref}
\usepackage[capitalize,noabbrev]{cleveref}

\numberwithin{equation}{section}
\newtheorem{definition}{Definition}[section]
\newtheorem{assumption}{Assumption}[section]
\newtheorem{proposition}{Proposition}[section]
\newtheorem{lemma}{Lemma}[section]
\newtheorem{theorem}{Theorem}[section]

\newcommand{\R}{\mathbb{R}}
\newcommand{\jsr}{\rho}
\newcommand{\co}{\operatorname{co}}
\newcommand{\diag}{\operatorname{diag}}

\title{Spectral Analysis of Heavy-Ball Q-Value Iteration}
\author{Donghwan Lee}
\affiliations{
Department of Electrical Engineering, Korea Advanced Institute of Science and Technology (KAIST)\\
Daejeon 34141, South Korea\\
donghwan@kaist.ac.kr
}

\begin{document}
\maketitle

\begin{abstract}
We study the convergence and acceleration of Q-value iteration (QVI) with
heavy-ball momentum. Although acceleration of value iteration has been
studied extensively, there has been less work on the acceleration of heavy-ball
QVI for control tasks. We analyze heavy-ball QVI from the viewpoint of
switched linear system (SLS) theory and the joint spectral radius (JSR). First, we
convert heavy-ball QVI into an exact SLS and interpret its
convergence through the JSR. We also give JSR-based conditions under
which heavy-ball QVI can be faster than standard QVI.
\end{abstract}

\begin{center}
\small
\textbf{Keywords.}
QVI, heavy-ball momentum, Bellman residual, JSR,
SLS, linear function approximation
\end{center}

\section{Introduction}
\label{sec:introduction}

Q-value iteration (QVI) is a basic fixed-point method for discounted Markov
decision process (MDP) control~\citep{puterman1994markov,bertsekas1996neuro}. Its
standard convergence analysis is based on the contraction property of the Bellman
optimality operator~\citep{puterman1994markov,bertsekas1996neuro}. Many
acceleration methods have been developed for value iteration and related Bellman
iterations~\citep{bertsekas1995rankone,farahmand2021pid,goyal2022firstorder},
but the acceleration of QVI with a heavy-ball momentum term is less understood
for control tasks~\citep{farahmand2021pid,goyal2022firstorder,weng2019momentum}.

The main difficulty is that Bellman optimality QVI is not a single fixed linear
system. Because the greedy action can change with the current Q-function, the
linear part of the error recursion switches across policy-dependent modes. After
heavy-ball momentum is added, the method becomes an augmented switched linear
system (SLS). Consequently, an eigenvalue analysis of one fixed policy does not
by itself certify global worst-case acceleration.

We study the heavy-ball QVI recursion~\citep{farahmand2021pid,goyal2022firstorder,weng2019momentum} given by the following update:
\begin{align}
  Q_{k+1}=Q_k+\alpha(F(Q_k)-Q_k)+\eta(Q_k-Q_{k-1}),
  \label{eq:intro-hb-update}
\end{align}
where $F$ is the Bellman optimality operator, $\alpha\in (0,1]$ is a
relaxation parameter (or a step-size), and $\eta>0$ is the heavy-ball momentum parameter. We
rewrite this recursion exactly as an SLS~\citep{liberzon2003switching,lin2009stability,shorten2007stability}
around the optimal Q-function. This representation lets us interpret convergence
and acceleration through the joint spectral radius (JSR)~\citep{rota1960note,jungers2009joint},
which measures the worst-case product growth rate of the SLS
modes~\citep{jungers2009joint,liberzon2003switching}.
In an SLS, the active system matrix changes with the switching signal. Hence,
the eigenvalues or spectral radius of a single fixed matrix do not generally
characterize worst-case product growth. We therefore use the JSR as the main
rate notion. Our acceleration analysis focuses on the common eigenvector
direction shared by the switching matrix family; it does not by itself
characterize the remaining transverse directions. Nevertheless, this analysis is
a first step toward understanding heavy-ball QVI for control tasks and provides
intuition for sharper future analyses.

\section{Related Work}
\label{sec:related_works}

Momentum-based modifications of value iteration (VI) have been studied from
several perspectives. \citet{farahmand2021pid} interpret VI as a feedback-control system in their proportional-integral-derivative (PID) accelerated VI framework and introduce proportional-derivative (PD), proportional-integral (PI), and PID variants. In fixed-policy settings, the PD/heavy-ball term changes the eigenvalues of the error dynamics of VI through a second-order companion polynomial associated with each eigenvalue of the transition matrix. \citet{goyal2022firstorder} also study momentum value computation and accelerated value iteration, connecting VI with first-order optimization methods and analyzing momentum-type VI updates for policy evaluation. Momentum-based accelerated Q-learning has also been studied in a stochastic approximation setting~\citep{weng2019momentum,bowen2021finitetime}. We analyze heavy-ball QVI as an SLS and compare its JSR against the JSR of standard QVI. Unlike much of the existing momentum-VI literature, which focuses mainly on policy evaluation or on algorithmic variants different from the update in~\Cref{eq:intro-hb-update}, this paper treats the control setting for the simple constant-stepsize heavy-ball QVI recursion and studies it through an exact SLS representation and a JSR-based rate comparison.

Other acceleration mechanisms for dynamic programming iterations are related but technically different. Adaptive relaxation and lookahead methods for VI were studied by~\citet{herzberg1994accelerating,shlakhter2010acceleration}; \citet{geist2018anderson} apply Anderson acceleration to dynamic programming iterations; \citet{vieillard2020momentum} propose Momentum VI, which averages successive $Q$-functions rather than using the heavy-ball QVI update analyzed here; and \citet{lee2023anchoring} give another provably accelerated Bellman-error mechanism for both Bellman consistency and optimality operators. Speedy Q-learning is also relevant because \citet{azar2011speedy} use two successive Q-estimates to obtain faster finite-sample bounds in a model-free setting. These works support the broader idea that past iterates or modified Bellman updates can accelerate Bellman-type recursions, but their proofs and algorithms differ from the SLS JSR analysis of heavy-ball QVI given here.

The slow constant direction of discounted dynamic programming iterations has a
long history. Rank-one correction and extrapolation methods exploit the dominant
stochastic matrix direction to improve VI behavior~\citep{bertsekas1995rankone}. Recent rank-one or deflation-based methods also use dominant-direction information, including rank-one modified VI and deflated dynamics VI~\citep{kolarijani2025rankone,lee2025deflateddynamics}. More recent deflation-based approaches remove or quotient out the component along the common eigenvector in order to expose faster
transverse dynamics; see, for example, the switching-geometry analysis of
deflated QVI by~\citet{lee2026deflated}. The present paper is
related in that every standard QVI switching matrix treats the constant-vector
direction in the same way. The mechanism studied here is different: heavy-ball momentum does not deflate or
remove this direction, but turns it into a two-dimensional companion block. The
main issue is therefore to combine the exact common-eigenvector improvement condition
with a projected JSR condition that controls the remaining policy-dependent
SLS dynamics.

The JSR is a standard measure of worst-case exponential growth for products of
matrices under arbitrary switching; see, for example,~\citep{jungers2009joint}. VI updates naturally lead to SLS representations because the greedy action
can change with the iterate. The SLS analysis used here is close to the SLS representation of Q-learning by~\citet{lee2026lyapunovcertified}. The projected error viewpoint is also related to recent
JSR analyses of QVI geometry, including~\citet{lee2026geometry,lee2026deflated}. Compared with these works, the present paper
focuses on the augmented heavy-ball SLS dynamics.

\section{Preliminaries}
\label{sec:preliminaries}
\subsection{Notation}
\label{sec:notation}

The set of real numbers is denoted by $\R$; $\R^m$ is the $m$-dimensional
Euclidean space; and $\R^{m\times n}$ is the set of all $m\times n$ real
matrices. For a matrix $A$, $A^\top$ denotes its transpose. The identity
matrix is denoted by $I$. For vectors, $e_i$ is the $i$th standard basis vector,
with dimension clear from context, and $\otimes$ denotes the Kronecker product.
For a finite set $\mathcal S$, $|\mathcal S|$ denotes its cardinality. For finite tabular state and action sets, set
\begin{align*}
  n:=|\mathcal S||\mathcal A|.
\end{align*}
We write $\Delta_m:=\left\{q\in\R^m:q_i\geq0,\ \sum_{i=1}^m q_i=1\right\}$ for the probability simplex in $\R^m$. For a finite matrix family
$\mathcal H=\{A_1,\ldots,A_N\}$, $\co(\mathcal H)
:=\left\{\sum_{i=1}^N\lambda_i A_i:
\lambda_i\geq0,\ \sum_{i=1}^N\lambda_i=1\right\}$ denotes its convex hull.
We also use standard matrix notation that appears repeatedly below. For a
vector $x$, $\|x\|_2$ is the Euclidean norm. For a square matrix $A$,
$\rho(A)$ denotes its ordinary spectral radius, while $\rho(\mathcal H)$ denotes
the JSR of a switching family once the JSR is defined below. For a symmetric
matrix $B$, $B\succ0$ means that $B$ is positive definite.

\subsection{Switching Linear System}
\label{sec:switching_systems}

The stability certificates used later are stated in the language of switched linear systems (SLSs), so we first recall the basic model before specializing it to the
Bellman-induced switching families. Let us consider the discrete-time switching
affine system~\citep{liberzon2003switching,lin2009stability,shorten2007stability}
\begin{align*}
  x_{k+1}=A_{\sigma_k}x_k+b_{\sigma_k},
\end{align*}
where each index $i\in\{1,2,\ldots,M\}$, equivalently each affine pair
$(A_i,b_i)$, is called a \emph{mode}, and $\sigma_k$ is the switching signal that
selects the active mode at time $k$. The matrix $A_{\sigma_k}$ is selected from
the prescribed family $\mathcal H:=\{A_1,A_2,\ldots,A_M\}$, which is called a
\emph{switching family}; $b_{\sigma_k}$ is a mode-dependent affine term. When
$b_{\sigma_k}=0$, the deterministic part reduces to an SLS, $x_{k+1}=A_{\sigma_k}x_k$. The worst-case exponential rate of the SLS
family is characterized by the JSR, defined as follows.
\begin{definition}
\label{def:jsr}
For a bounded set of matrices $\mathcal H\subset\R^{m\times m}$, its joint
spectral radius is
\begin{align*}
\jsr(\mathcal H)
:=
\lim_{k\to\infty}
\sup_{A_1,\ldots,A_k\in\mathcal H}
\|A_k\cdots A_1\|^{1/k}.
\end{align*}
\end{definition}
We note that the JSR is independent of the chosen submultiplicative
norm~\citep{rota1960note,jungers2009joint}. When $\mathcal H$ is finite, the
supremum for each fixed product length is a maximum over products generated by
matrices in $\mathcal H$. For a finite family $\mathcal H$, the notation
$\jsr(\co(\mathcal H))$ means the JSR computed when each factor in a product is
allowed to be any convex combination of matrices in $\mathcal H$.
Throughout the later JSR certificates, $\rho(\mathcal H)$ denotes this same JSR
value when the argument is a switching family.
We use two auxiliary JSR facts throughout. Convex-hull invariance and the block
triangular JSR decomposition are stated in Appendix~\ref{app:auxiliary-facts} as
\Cref{lem:convex-hull-jsr,lem:block-triangular-jsr}.

\subsection{Joint Spectral Radius and Lyapunov Certificates}
\label{sec:joint_spectral_radius}
Before proceeding, we note that most proofs are collected in Appendix~\ref{app:proofs-main-results}; the main text focuses on the constructions, statements, and their implications.
The JSR in~\Cref{def:jsr} turns arbitrary SLS products into a
single worst-case exponential rate. An SLS is uniformly exponentially stable under arbitrary switching if there
exist constants $C\geq1$
and $\eta\in(0,1)$ such that $\|A_{\sigma_{k-1}}\cdots A_{\sigma_0}x\|_2\leq C\eta^k\|x\|_2$ for every horizon $k\geq0$, every initial state $x\in\R^m$, and every switching
sequence. A \emph{common Lyapunov function} for $\mathcal H$ is a positive definite
function that decreases along every mode. In the analysis below, the Bellman
maximum in linear Q-learning induces stochastic-policy switching, and the
Lyapunov functions are built from products of the corresponding mode matrices.
The following finite-family piecewise-quadratic construction~\citep{lee2026lyapunovcertified,hushenzhang2010generating} is the Lyapunov certificate used in the error-recursion arguments below.
It turns the abstract JSR condition into a concrete decrease estimate that can be
reused for each algorithmic family.
\begin{lemma}
\label{lem:common_lyapunov_construction}
Let $\mathcal H=\{A_1,A_2,\ldots,A_M\}\subset\R^{m\times m}$ and fix
$\epsilon>0$ such that $\beta_\epsilon:=\rho(\mathcal H)+\epsilon\in(0,1)$.
For a word $\sigma=(\sigma_1,\ldots,\sigma_k)\in\{1,\ldots,M\}^k$, write
\begin{align*}
A_\sigma:=A_{\sigma_k}\cdots A_{\sigma_1},
\end{align*}
with the convention that the empty word gives $A_\sigma=I$. Define
\begin{align*}
V_\epsilon^\infty(x)
:=\sum_{k=0}^\infty \beta_\epsilon^{-2k}
\max_{\sigma\in\{1,\ldots,M\}^k}\|A_\sigma x\|_2^2,
\qquad x\in\R^m.
\end{align*}
Then $V_\epsilon^\infty$ is finite for every $x$, and there exists
$C_\epsilon>0$ such that
\begin{align*}
\|x\|_2^2\leq V_\epsilon^\infty(x)\leq C_\epsilon\|x\|_2^2,
\qquad \forall x\in\R^m.
\end{align*}
The function $p_\epsilon(x):=\sqrt{V_\epsilon^\infty(x)}$ is a norm on
$\R^m$, and every mode satisfies
\begin{align*}
p_\epsilon(A_i x)\leq\beta_\epsilon p_\epsilon(x),
\qquad \forall x\in\R^m,
\qquad i=1,\ldots,M.
\end{align*}
\end{lemma}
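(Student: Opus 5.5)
The plan is to obtain finiteness and the upper bound from the definition of the JSR, to read off the lower bound from the $k=0$ term, and to derive the norm property and the contraction directly from the series definition.

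\textbf{Step 1 (finiteness and upper bound).} Pick an intermediate rate $\gamma$ with $\rho(\mathcal H)<\gamma<\beta_\epsilon$, for instance $\gamma=\rho(\mathcal H)+\epsilon/2$. By \Cref{def:jsr} (the limit exists and equals the infimum over $k$ of the normalized suprema, by Fekete's lemma applied to the submultiplicative sequence $\max_\sigma\|A_\sigma\|$), there is $K$ such that $\max_{|\sigma|=k}\|A_\sigma\|_2\le\gamma^k$ for all $k\ge K$. The finitely many lengths $k<K$ only change the constant, so there is $D\ge1$ with $\max_{|\sigma|=k}\|A_\sigma\|_2\le D\gamma^k$ for every $k\ge0$. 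Then
\begin{align*}
V_\epsilon^\infty(x)\le D^2\sum_{k\ge0}(\gamma/\beta_\epsilon)^{2k}\|x\|_2^2,
\end{align*}
a convergent geometric series. This gives finiteness and $C_\epsilon:=D^2/(1-(\gamma/\beta_\epsilon)^2)$.

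\textbf{Step 2 (lower bound).} The $k=0$ term equals $\|x\|_2^2$ and all terms are nonnegative, so $V_\epsilon^\infty(x)\ge\|x\|_2^2$.

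\textbf{Step 3 (norm property).} Write $p_\epsilon(x)=\big(\sum_{k}\beta_\epsilon^{-2k}\max_{|\sigma|=k}\|A_\sigma x\|_2^2\big)^{1/2}$.
\begin{itemize}
\item Definiteness follows from Step 2.
\item Absolute homogeneity is immediate, since each $\|A_\sigma(cx)\|_2=|c|\,\|A_\sigma x\|_2$.
\item For the triangle inequality, set $f_k(x):=\max_{|\sigma|=k}\|A_\sigma x\|_2$. Each $f_k$ is a seminorm, being a maximum of seminorms, so $f_k(x+y)\le f_k(x)+f_k(y)$. Now $p_\epsilon(x)$ is the weighted $\ell^2$ norm of the sequence $(\beta_\epsilon^{-k}f_k(x))_k$. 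Minkowski's inequality in $\ell^2$, together with monotonicity of the $\ell^2$ norm under entrywise domination of nonnegative sequences, gives $p_\epsilon(x+y)\le p_\epsilon(x)+p_\epsilon(y)$.
\end{itemize}

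\textbf{Step 4 (contraction along every mode).} Fix $i$. For each $k$, every word $\sigma$ of length $k$ gives $A_\sigma A_i$, a word of length $k+1$, so $f_k(A_ix)\le f_{k+1}(x)$. Hence
\begin{align*}
V_\epsilon^\infty(A_ix)\le\sum_{k\ge0}\beta_\epsilon^{-2k}f_{k+1}(x)^2=\beta_\epsilon^{2}\sum_{k\ge1}\beta_\epsilon^{-2k}f_k(x)^2\le\beta_\epsilon^2V_\epsilon^\infty(x),
\end{align*}
where the last step just drops the nonnegative $k=0$ term. Taking square roots yields $p_\epsilon(A_ix)\le\beta_\epsilon p_\epsilon(x)$.

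The only delicate point is Step 1. The uniform bound $\max_\sigma\|A_\sigma\|\le D\gamma^k$ with $\gamma$ strictly below $\beta_\epsilon$ is needed because using $\beta_\epsilon$ itself would give a non-summable series. Standard JSR facts supply this bound.
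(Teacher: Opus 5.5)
Your proof is correct and complete. The intermediate rate $\rho(\mathcal H)<\gamma<\beta_\epsilon$ supplies the geometric bound that makes the series summable, and the $k=0$ term gives the lower bound. Minkowski's inequality in the weighted $\ell^2$ space of the seminorms $f_k$ gives the triangle inequality, and the index shift $f_k(A_ix)\le f_{k+1}(x)$ gives the one-step contraction.

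The paper does not prove this lemma itself; it defers to the cited references. Your argument is the standard self-contained verification of that construction, so nothing is missing.
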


Throughout the sequel, whenever this construction is applied to a switching
family with JSR less than one, we call the resulting $V_\epsilon^\infty$ a \emph{JSR
Lyapunov function} for that family, and we call the associated norm $p_\epsilon$
a \emph{JSR Lyapunov norm}.
The same Lyapunov argument will be used for the error recursions considered
below. The following lemma is the common bridge from a JSR bound to convergence of the corresponding error recursion.
\begin{lemma}
\label{lem:standard_jsr_convergence}
\label{lem:basic_jsr_convergence}
Let $\mathcal H=\{A_1,\ldots,A_M\}\subset\R^{n\times n}$ be finite and suppose
$\rho(\mathcal H)<1$. Let us consider any error recursion
\begin{align*}
x_{k+1}=A_kx_k,
\qquad A_k\in\co(\mathcal H),
\qquad k\in\{0,1,\ldots\}.
\end{align*}
Then, for every $\epsilon>0$ such that
$\beta_\epsilon:=\rho(\mathcal H)+\epsilon<1$, the Lyapunov function
$V_\epsilon^\infty$ and norm $p_\epsilon$ from~\Cref{lem:common_lyapunov_construction}, applied to
$\mathcal H$, satisfy
\begin{align*}
V_\epsilon^\infty(x_{k+1})
\leq
\beta_\epsilon^2 V_\epsilon^\infty(x_k),
\qquad
p_\epsilon(x_{k+1})\leq \beta_\epsilon p_\epsilon(x_k).
\end{align*}
Consequently, if $C_\epsilon$ is the constant from~\Cref{lem:common_lyapunov_construction}, then
\begin{align*}
p_\epsilon(x_k)&\leq \beta_\epsilon^k p_\epsilon(x_0),\\
\|x_k\|_2
&\leq \beta_\epsilon^k p_\epsilon(x_0)\\
&\leq \sqrt{C_\epsilon}\,\beta_\epsilon^k\|x_0\|_2,
\end{align*}
and hence $x_k\to0$.
\end{lemma}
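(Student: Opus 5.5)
The plan is to reduce the convex-hull recursion to the mode-wise contraction already established in \Cref{lem:common_lyapunov_construction}, using only the fact that $p_\epsilon$ is a norm. Fix $\epsilon>0$ with $\beta_\epsilon=\rho(\mathcal H)+\epsilon<1$. \Cref{lem:common_lyapunov_construction} then gives three things: $V_\epsilon^\infty$ is finite, it satisfies the sandwich bound $\|x\|_2^2\le V_\epsilon^\infty(x)\le C_\epsilon\|x\|_2^2$, and $p_\epsilon=\sqrt{V_\epsilon^\infty}$ is a norm with $p_\epsilon(A_ix)\le\beta_\epsilon p_\epsilon(x)$ for each vertex $A_i$.

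The key step is to pass from the vertices to an arbitrary $A_k=\sum_i\lambda_iA_i\in\co(\mathcal H)$. I will use the triangle inequality and absolute homogeneity of the norm $p_\epsilon$:
\begin{align*}
p_\epsilon(A_kx_k)\le\sum_{i=1}^M\lambda_i\,p_\epsilon(A_ix_k)\le\sum_{i=1}^M\lambda_i\beta_\epsilon p_\epsilon(x_k)=\beta_\epsilon p_\epsilon(x_k).
\end{align*}
This gives $p_\epsilon(x_{k+1})\le\beta_\epsilon p_\epsilon(x_k)$. Squaring both sides gives $V_\epsilon^\infty(x_{k+1})\le\beta_\epsilon^2V_\epsilon^\infty(x_k)$. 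This is the only nontrivial point, since $V_\epsilon^\infty$ itself is defined through a max over vertex words and is not obviously decreasing along convex combinations. The convexity of the norm $p_\epsilon$ is exactly what resolves this, so I take the norm property from \Cref{lem:common_lyapunov_construction} as given.

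Finally, induction on $k$ gives $p_\epsilon(x_k)\le\beta_\epsilon^kp_\epsilon(x_0)$. The lower sandwich bound gives $\|x_k\|_2\le p_\epsilon(x_k)$, and the upper bound gives $p_\epsilon(x_0)\le\sqrt{C_\epsilon}\|x_0\|_2$. Chaining these yields the stated estimate, and $x_k\to0$ follows because $\beta_\epsilon<1$.
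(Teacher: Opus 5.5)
Your proposal is correct and follows essentially the same route as the paper's proof. Both extend the vertex contraction $p_\epsilon(A_ix)\le\beta_\epsilon p_\epsilon(x)$ to convex combinations using the triangle inequality and homogeneity of the norm $p_\epsilon$, square to obtain the $V_\epsilon^\infty$ decrease, iterate, and conclude with the sandwich bound $\|x\|_2\le p_\epsilon(x)\le\sqrt{C_\epsilon}\|x\|_2$.
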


\subsection{Discounted Markov Decision Processes}
\label{sec:discounted_mdps}

We consider a finite discounted Markov decision process (MDP)~\citep{puterman1994markov,bertsekas1996neuro} with state-space
$\mathcal S=\{1,\ldots,|\mathcal S|\}$, action-space
$\mathcal A=\{1,\ldots,|\mathcal A|\}$, transition probability
$P(s'\mid s,a)$, real-valued one-step reward $r(s,a,s')$, expected reward $R(s,a):=\sum_{s'\in\mathcal S}P(s'\mid s,a)r(s,a,s')$, and discount factor $\gamma\in(0,1)$. State-action functions are viewed as
vectors in $\R^{n}$ using the action-block ordering $(1,1),(2,1),\ldots,(|\mathcal S|,1),
(1,2),(2,2),\ldots,(|\mathcal S|,|\mathcal A|)$. All matrices and vectors indexed by state-action pairs use this ordering. Define
\begin{align*}
P:=
\begin{bmatrix}
P_1\\
\vdots\\
P_{|\mathcal A|}
\end{bmatrix}
\in\R^{n\times |\mathcal S|},
\qquad
R:=
\begin{bmatrix}
R(\cdot,1)\\
\vdots\\
R(\cdot,|\mathcal A|)
\end{bmatrix}
\in\R^{n},
\end{align*}
where $P_a=P(\cdot\mid\cdot,a)\in\R^{|\mathcal S|\times|\mathcal S|}$.
Let $\Theta$ denote the set of deterministic stationary policies
$\pi:\mathcal S\to\mathcal A$. For any stochastic policy
$\mu:\mathcal S\to\Delta_{|\mathcal A|}$, define
\begin{align*}
\Pi^\mu:=
\begin{bmatrix}
\mu(1)^\top\otimes e_1^\top\\
\mu(2)^\top\otimes e_2^\top\\
\vdots\\
\mu(|\mathcal S|)^\top\otimes e_{|\mathcal S|}^\top
\end{bmatrix}
\in\R^{|\mathcal S|\times n}.
\end{align*}
For a deterministic policy $\pi\in\Theta$, the same notation $\Pi^\pi$ is used by
identifying $\pi(s)$ with its one-hot encoding.
For
$Q\in\R^{n}$, define
\begin{align*}
V_Q(s)&:=\max_{a\in\mathcal A}Q(s,a),\\
V_Q&:=(V_Q(1),\ldots,V_Q(|\mathcal S|))^\top.
\end{align*}
The Bellman optimality operator is
\begin{equation*}
  F(Q):=R+\gamma P V_Q.
\end{equation*}
The Bellman optimal Q-function $Q^\star\in\R^n$ is the unique fixed point of the
Bellman optimality operator $Q^\star=F(Q^\star)$.
For a tie-broken greedy policy $\pi_Q$ satisfying
\begin{align*}
  \pi_Q(s)\in\operatorname*{arg\,max}_{a\in\mathcal A}Q(s,a),
  \qquad s\in\mathcal S,
\end{align*}
one has $V_Q=\Pi^{\pi_Q}Q$ and
\begin{equation*}
  F(Q)=R+\gamma P\Pi^{\pi_Q}Q.
\end{equation*}
For every stochastic policy $\mu$, the matrix $P\Pi^\mu$ is row-stochastic and
satisfies
\begin{align}
  P\Pi^\mu\mathbf{1}=\mathbf{1},\label{eq:policy-selector-row-stochastic}
\end{align}
where $\mathbf{1}$ is the all-ones vector with dimension clear from context. The
Bellman-difference selector and the convex-hull relation between stochastic and
deterministic policy selectors are collected in \Cref{lem:bellman_difference_selector} in Appendix.

\section{Standard QVI as the Fixed-Point Benchmark}
\label{sec:standard_q_baseline}

In this paper, we consider the $\alpha$-relaxed QVI update, as a comparison benchmark, given by
\begin{equation}
\begin{aligned}
Q_{k+1}
&=Q_k+\alpha(F(Q_k)-Q_k)\\
&=(1-\alpha)Q_k+\alpha F(Q_k),
\end{aligned}
\label{eq:standard-q-update}
\end{equation}
where $0<\alpha\le1$ can be seen as a relaxation parameter or a step-size.
This is the relaxed version of standard QVI. When $\alpha=1$,~\Cref{eq:standard-q-update} reduces to the usual standard QVI. As in the SLS representation of Q-learning by~\citet{lee2026lyapunovcertified}, we can show that the standard QVI error admits an exact SLS form.
\begin{lemma}
\label{lem:standard-q-error-system}
Using~\Cref{lem:bellman_difference_fixed_point_selector} in Appendix, the error
dynamics around $Q^\star$ are expressed as
\begin{equation*}
  Q_{k+1}-Q^\star
  =A_{\mu_k}^{\rm QVI}(Q_k-Q^\star),
\end{equation*}
where for any stochastic policy $\mu$, $A_\mu^{\rm QVI}$ is defined as
\begin{align*}
  A_\mu^{\rm QVI}:=(1-\alpha)I+\alpha\gamma P\Pi^\mu.
\end{align*}
\end{lemma}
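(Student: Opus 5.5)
The plan is to subtract the fixed-point identity from the update and rewrite the Bellman difference $F(Q_k)-F(Q^\star)$ as a linear map applied to $Q_k-Q^\star$, using a stochastic policy selector. First, since $Q^\star=F(Q^\star)$, the relaxed fixed point also satisfies $Q^\star=(1-\alpha)Q^\star+\alpha F(Q^\star)$. Subtracting this from~\Cref{eq:standard-q-update} gives
\begin{align*}
Q_{k+1}-Q^\star=(1-\alpha)(Q_k-Q^\star)+\alpha\bigl(F(Q_k)-F(Q^\star)\bigr).
\end{align*}
Because the reward term $R$ cancels, the Bellman difference is
\begin{align*}
F(Q_k)-F(Q^\star)=\gamma P\bigl(V_{Q_k}-V_{Q^\star}\bigr).
\end{align*}

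The key step is to show that there is a stochastic policy $\mu_k$ with $V_{Q_k}-V_{Q^\star}=\Pi^{\mu_k}(Q_k-Q^\star)$. This is the content of~\Cref{lem:bellman_difference_fixed_point_selector}, which I would invoke directly. If I had to justify it, I would argue statewise. For each $s$ the scalar $\max_a Q_k(s,a)-\max_a Q^\star(s,a)$ is bounded as follows:
\begin{align*}
\min_a\bigl(Q_k-Q^\star\bigr)(s,\pi_{Q^\star}(s)) \text{-type bounds: }\
(Q_k-Q^\star)(s,\pi_{Q^\star}(s))\le V_{Q_k}(s)-V_{Q^\star}(s)\le (Q_k-Q^\star)(s,\pi_{Q_k}(s)).
\end{align*}
The lower bound comes from evaluating $Q_k$ at the greedy action of $Q^\star$, and the upper bound from evaluating $Q^\star$ at the greedy action of $Q_k$. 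The middle value therefore lies between two entries of $(Q_k-Q^\star)(s,\cdot)$. Hence it equals a convex combination $\mu_k(s)^\top (Q_k-Q^\star)(s,\cdot)$ supported on those two actions. Stacking over states gives $\Pi^{\mu_k}$, by the definition of $\Pi^\mu$ with the action-block ordering. If the two endpoints coincide, any weight works.

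Substituting this representation gives
\begin{align*}
Q_{k+1}-Q^\star=\bigl((1-\alpha)I+\alpha\gamma P\Pi^{\mu_k}\bigr)(Q_k-Q^\star)=A^{\rm QVI}_{\mu_k}(Q_k-Q^\star).
\end{align*}
The only nontrivial step is the selector construction, which is a one-dimensional intermediate-value argument. The main care needed there is that $\mu_k$ is state-dependent and depends on both $Q_k$ and $Q^\star$, so it is a stochastic rather than deterministic policy. The remainder is direct algebra.
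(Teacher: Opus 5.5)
Your proposal is correct and matches the paper's proof: you subtract $Q^\star=F(Q^\star)$ from the relaxed update, invoke the Bellman-difference selector $\mu_k$ of \Cref{lem:bellman_difference_fixed_point_selector}, and regroup the terms into $A_{\mu_k}^{\rm QVI}$. Your optional justification of the selector via the two greedy actions $\pi_{Q^\star}(s)$ and $\pi_{Q_k}(s)$ is a slightly sharper form of the paper's min/max interpolation in \Cref{lem:bellman_difference_selector}, since it gives a selector supported on at most two actions per state; the stray ``$\min_a\ldots$-type bounds'' fragment in that display should be deleted.
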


Since the error evolves as an SLS, its worst-case convergence rate is naturally measured by the JSR of its SLS family. The JSR gives the maximal asymptotic exponential growth rate over all switching products, and it is independent of the particular norm used in its definition~\citep{rota1960note,jungers2009joint}.
We introduce the corresponding switching family
\begin{equation*}
  \mathcal A^{\rm QVI}:=\left\{A_\pi^{\rm QVI}:\pi\in\Theta\right\}.
\end{equation*}
Note that this switching family is finite and only collects matrices corresponding to deterministic policies.
Therefore, for every stochastic selector $\mu_k$, one has
$A_{\mu_k}^{\rm QVI}\in\co(\mathcal A^{\rm QVI})$ by the convex-hull identity in \Cref{lem:bellman_difference_selector} in Appendix.
The next proposition shows that the JSR of $\co(\mathcal A^{\rm QVI})$ is identical to the JSR of the finite switching family $\mathcal A^{\rm QVI}$.
\begin{proposition}
\label{prop:standard_q_jsr}
For the standard QVI family, one has
\begin{equation}
  \rho(\mathcal A^{\rm QVI})
  =\rho(\co(\mathcal A^{\rm QVI}))
  =1-\alpha(1-\gamma).
  \label{eq:q-jsr}
\end{equation}
\end{proposition}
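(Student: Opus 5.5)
The plan is to sandwich the JSR between a matching lower and upper bound, both equal to $1-\alpha(1-\gamma)$. The first equality, $\rho(\mathcal A^{\rm QVI})=\rho(\co(\mathcal A^{\rm QVI}))$, I will take directly from the convex-hull invariance of the JSR (\Cref{lem:convex-hull-jsr}). So it remains to compute $\rho(\mathcal A^{\rm QVI})$.

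\emph{Upper bound.} I will use the $\infty$-norm, which is submultiplicative and so admissible in \Cref{def:jsr}. Since $\alpha\in(0,1]$, the matrix $A_\pi^{\rm QVI}=(1-\alpha)I+\alpha\gamma P\Pi^\pi$ is entrywise nonnegative. Its row sums are $(1-\alpha)+\alpha\gamma$ by \eqref{eq:policy-selector-row-stochastic}. Hence $\|A_\pi^{\rm QVI}\|_\infty=1-\alpha(1-\gamma)$ for every $\pi\in\Theta$. Submultiplicativity then gives $\|A_{\pi_k}\cdots A_{\pi_1}\|_\infty\le(1-\alpha(1-\gamma))^k$. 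Taking $k$-th roots and the limit yields $\rho(\mathcal A^{\rm QVI})\le 1-\alpha(1-\gamma)$.

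\emph{Lower bound.} Every mode shares the common eigenvector $\mathbf 1$. Again by \eqref{eq:policy-selector-row-stochastic}, $A_\pi^{\rm QVI}\mathbf 1=(1-\alpha+\alpha\gamma)\mathbf 1$. Therefore any product of length $k$ maps $\mathbf 1$ to $(1-\alpha(1-\gamma))^k\mathbf 1$. It follows that $\|A_{\pi_k}\cdots A_{\pi_1}\|_\infty\ge(1-\alpha(1-\gamma))^k$, and the limit in \Cref{def:jsr} is at least $1-\alpha(1-\gamma)$. Equivalently, one can note $\rho(\mathcal H)\ge\rho(A)$ for any $A\in\mathcal H$, and here $\rho(A_\pi^{\rm QVI})\ge 1-\alpha(1-\gamma)$ because $\mathbf 1$ is an eigenvector with that eigenvalue. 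Combining the two bounds gives \eqref{eq:q-jsr}.

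There is no real obstacle; each step is a one-line estimate. The only point needing care is the nonnegativity of $(1-\alpha)I$, which uses $\alpha\le1$. Without it, the $\infty$-norm would equal $|1-\alpha|+\alpha\gamma$ rather than the row sum. This is why the statement restricts $\alpha$ to $(0,1]$.
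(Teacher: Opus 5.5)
Your proof is correct and is essentially the paper's argument. The paper likewise uses nonnegativity of $A_\pi^{\rm QVI}$ for $0<\alpha\le1$ and the common row sum $1-\alpha(1-\gamma)$ to obtain $\bigl\|A_{\pi_{k-1}}^{\rm QVI}\cdots A_{\pi_0}^{\rm QVI}\bigr\|_\infty=\bigl(1-\alpha(1-\gamma)\bigr)^k$ exactly for every word, and it cites \Cref{lem:convex-hull-jsr} for the convex-hull equality. Your split into a submultiplicativity upper bound and a common-eigenvector lower bound simply unpacks that exact norm identity. The only slip is in your closing aside: for $\alpha>1$ the $\infty$-norm is in general only bounded above by $|1-\alpha|+\alpha\gamma$, not equal to it, but this does not affect the proof.
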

We also note that if $0<\alpha\le1$, then $\rho(\mathcal A^{\rm QVI})=1-\alpha(1-\gamma)<1$. We therefore impose this range as an assumption.
\begin{assumption}
\label{ass:qvi-alpha-stability}
Throughout the paper, we assume that the step-size satisfies $0<\alpha\le1$.
\end{assumption}
A key consequence of the row-stochastic structure in~\Cref{eq:policy-selector-row-stochastic} is that the vector $\mathbf{1}$ is a common eigenvector for every QVI mode:
\begin{equation*}
  A_\mu^{\rm QVI}\mathbf{1} =\bigl(1-\alpha(1-\gamma)\bigr)\mathbf{1},
\end{equation*}
for every stochastic policy $\mu$. This common direction is exactly the ambient-space bottleneck as emphasized
by~\citet{lee2026deflated}: convergence of the iterate $Q_k$ can be governed by
the component along this common eigenvector $\mathbf{1}$ even when the transverse projected dynamics are faster.
The next lemma presents the common eigenvector direction of the standard QVI SLS
family and the scalar recursion induced on that direction.
\begin{lemma}
\label{lem:standard-qvi-common-eigenvector}
For every stochastic policy $\mu$, one has
\begin{equation*}
  A_\mu^{\rm QVI}\mathbf{1}
  =\bigl(1-\alpha(1-\gamma)\bigr)\mathbf{1}.
\end{equation*}
Consequently, $\operatorname{span}\{\mathbf{1}\}$ is a common invariant subspace
of the standard QVI modes. If $Q_k-Q^\star=a_k\mathbf{1}$, then
$Q_{k+1}-Q^\star=a_{k+1}\mathbf{1}$ with
\begin{equation*}
  a_{k+1}=\bigl(1-\alpha(1-\gamma)\bigr)a_k.
\end{equation*}
\end{lemma}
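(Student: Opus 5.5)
The plan is a direct computation from the definition of $A_\mu^{\rm QVI}$ in \Cref{lem:standard-q-error-system}, combined with the row-stochastic identity~\Cref{eq:policy-selector-row-stochastic}.

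\textbf{Eigenvector identity.} Fix an arbitrary stochastic policy $\mu$. By linearity,
\begin{align*}
A_\mu^{\rm QVI}\mathbf{1}
=(1-\alpha)\mathbf{1}+\alpha\gamma P\Pi^\mu\mathbf{1}.
\end{align*}
Applying~\Cref{eq:policy-selector-row-stochastic} gives $P\Pi^\mu\mathbf{1}=\mathbf{1}$, so the right-hand side equals $(1-\alpha+\alpha\gamma)\mathbf{1}=\bigl(1-\alpha(1-\gamma)\bigr)\mathbf{1}$. The row-stochastic identity itself should be justified briefly if it is not taken as given: row $s$ of $\Pi^\mu$ is $\mu(s)^\top\otimes e_s^\top$, so $\Pi^\mu\mathbf{1}_n=\mathbf{1}_{|\mathcal S|}$ because $\mu(s)\in\Delta_{|\mathcal A|}$. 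Also, each row of $P$ is a probability distribution $P(\cdot\mid s,a)$, so $P\mathbf{1}_{|\mathcal S|}=\mathbf{1}_n$.

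\textbf{Invariance.} For any scalar $c$, the previous step gives $A_\mu^{\rm QVI}(c\mathbf{1})=c\bigl(1-\alpha(1-\gamma)\bigr)\mathbf{1}\in\operatorname{span}\{\mathbf{1}\}$. This holds for every $\mu$, so $\operatorname{span}\{\mathbf{1}\}$ is a common invariant subspace of all the modes.

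\textbf{Scalar recursion.} Suppose $Q_k-Q^\star=a_k\mathbf{1}$. By \Cref{lem:standard-q-error-system}, $Q_{k+1}-Q^\star=A_{\mu_k}^{\rm QVI}(a_k\mathbf{1})$ for the selector $\mu_k$ produced at step $k$. By the invariance step this equals $a_k\bigl(1-\alpha(1-\gamma)\bigr)\mathbf{1}$. Setting $a_{k+1}:=\bigl(1-\alpha(1-\gamma)\bigr)a_k$ gives the claim. The point is that this holds whichever selector $\mu_k$ is realized, because the eigenvalue does not depend on $\mu$.

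No step is a genuine obstacle. The only subtlety is that the selector $\mu_k$ in the error representation may depend on $Q_k$ and $Q^\star$, and so may be stochastic. Because the eigenvector identity has been proved uniformly over all stochastic $\mu$, this dependence does not matter.
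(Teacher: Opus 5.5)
Your proposal is correct and follows the paper's proof: both compute $A_\mu^{\rm QVI}\mathbf{1}$ directly from $P\Pi^\mu\mathbf{1}=\mathbf{1}$, then apply the error recursion of \Cref{lem:standard-q-error-system} to $a_k\mathbf{1}$, noting that the eigenvalue does not depend on the selector $\mu_k$. Your explicit check of the row-stochastic identity via the Kronecker structure of $\Pi^\mu$ is a harmless addition; the paper instead takes this identity from \Cref{lem:bellman_difference_selector}.
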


We next introduce the projection used to separate the common eigenvector
direction $\mathbf{1}$ from its orthogonal components for the standard QVI modes.
The same projection will be reused for the heavy-ball SLS family in~\Cref{sec:global_jsr_improvement}.
\begin{definition}
Let
\begin{equation}
  \Pi_\perp:=I-\frac{1}{n}\mathbf{1}\mathbf{1}^\top,
  \label{eq:orthogonal-projection}
\end{equation}
be the orthogonal projection onto $\operatorname{span}\left\{\mathbf{1}\right\}^\perp$. Let
$U\in\R^{n\times(n-1)}$ have orthonormal columns spanning
$\operatorname{span}\left\{\mathbf{1}\right\}^\perp$, so that $U^\top U=I_{n-1}$,
$U^\top\mathbf{1}=0$, and $UU^\top=\Pi_\perp$.
The projected standard QVI matrices and family are
\begin{equation}
  \bar A_\pi^{\rm QVI}:=U^\top A_\pi^{\rm QVI}U,
  \qquad
  \bar{\mathcal A}^{\rm QVI}:=\left\{\bar A_\pi^{\rm QVI}:\pi\in\Theta\right\}.
  \label{eq:projected-ql-family}
\end{equation}
For a stochastic policy $\mu$, the same formula defines $\bar A_\mu^{\rm QVI}$.
\end{definition}

Next, we show that the projected error $z_k:=U^\top(Q_k-Q^\star)$ is itself an SLS for standard QVI, and its active matrix is exactly the projected matrix defined in~\Cref{eq:projected-ql-family}.
\begin{lemma}
\label{lem:projected-error-systems}
Let $z_k:=U^\top(Q_k-Q^\star)$. For any stochastic-policy selector $\mu_k$, the
standard projected error satisfies
\begin{align*}
  z_{k+1}=\bar A_{\mu_k}^{\rm QVI}z_k.
\end{align*}
\end{lemma}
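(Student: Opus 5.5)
The plan is to start from the exact error recursion of \Cref{lem:standard-q-error-system}, $e_{k+1}=A_{\mu_k}^{\rm QVI}e_k$ with $e_k:=Q_k-Q^\star$, and apply $U^\top$ to both sides. This gives $z_{k+1}=U^\top A_{\mu_k}^{\rm QVI}e_k$. We then split the error along the orthogonal decomposition $\R^n=\operatorname{span}\{\mathbf{1}\}\oplus\operatorname{span}\{\mathbf{1}\}^\perp$, writing
\begin{align*}
e_k=\Pi_\perp e_k+\tfrac{1}{n}\mathbf{1}\mathbf{1}^\top e_k
=Uz_k+a_k\mathbf{1},
\qquad a_k:=\tfrac{1}{n}\mathbf{1}^\top e_k .
\end{align*}
This uses $UU^\top=\Pi_\perp$ and the definition in \Cref{eq:orthogonal-projection}.

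Substituting, we obtain $z_{k+1}=U^\top A_{\mu_k}^{\rm QVI}Uz_k+a_kU^\top A_{\mu_k}^{\rm QVI}\mathbf{1}$. By \Cref{lem:standard-qvi-common-eigenvector}, which holds for every stochastic $\mu$, we have $A_{\mu_k}^{\rm QVI}\mathbf{1}=(1-\alpha(1-\gamma))\mathbf{1}$. Since $U^\top\mathbf{1}=0$, the second term vanishes. The first term equals $\bar A_{\mu_k}^{\rm QVI}z_k$ by the definition in \Cref{eq:projected-ql-family}, extended to stochastic $\mu$, which gives the claim.

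The main point to watch is that the cross term really vanishes. This depends on $\mathbf{1}$ being a \emph{right} eigenvector common to all modes, which follows from the row-stochasticity in \Cref{eq:policy-selector-row-stochastic}. It is this property, and not any symmetry of $P\Pi^\mu$, that makes the projected dynamics closed in $z_k$ alone, with no dependence on $a_k$. Beyond this, the argument is a direct substitution.
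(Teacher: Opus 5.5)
Your proof is correct and takes essentially the same route as the paper. The paper packages the cancellation as $U^\top A_{\mu_k}^{\rm QVI}(I-UU^\top)=0$, using $A_{\mu_k}^{\rm QVI}\mathbf{1}=(1-\alpha(1-\gamma))\mathbf{1}$ and $U^\top\mathbf{1}=0$, and then inserts $UU^\top$; this is exactly your splitting $e_k=Uz_k+a_k\mathbf{1}$ with a vanishing cross term.
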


With this projected SLS representation, the JSR of the standard QVI SLS in~\Cref{lem:standard-q-error-system} can be analyzed by using a triangular decomposition that separates the common eigenvector direction from the transverse projected dynamics in~\Cref{lem:projected-error-systems,eq:projected-ql-family}.
\begin{lemma}
\label{lem:qvi-common-eigenvector-projected-jsr-decomposition}
For the switching family $\mathcal A^{\rm QVI}$ of the standard QVI and the
corresponding projected switching family $\bar{\mathcal A}^{\rm QVI}$ defined in
\Cref{eq:projected-ql-family}, we have
\begin{equation*}
  \rho(\mathcal A^{\rm QVI})
  =
  \max\left\{1-\alpha(1-\gamma),\rho(\bar{\mathcal A}^{\rm QVI})\right\}.
\end{equation*}
\end{lemma}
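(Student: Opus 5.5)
We will prove the identity by an orthogonal change of basis that puts every mode of $\mathcal A^{\rm QVI}$ into the same block upper-triangular form, and then apply the block triangular JSR decomposition, \Cref{lem:block-triangular-jsr} in the appendix. Let $u:=\mathbf{1}/\sqrt n$ and form the orthogonal matrix $T:=[\,u\;\;U\,]\in\R^{n\times n}$. This is orthogonal because $U^\top U=I_{n-1}$, $U^\top\mathbf{1}=0$, and $u^\top u=1$. For each deterministic policy $\pi\in\Theta$ we compute $T^\top A_\pi^{\rm QVI}T$ blockwise.

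\Cref{lem:standard-qvi-common-eigenvector} gives $A_\pi^{\rm QVI}u=\bigl(1-\alpha(1-\gamma)\bigr)u$. From this:
\begin{itemize}
\item the $(1,1)$ block is $u^\top A_\pi^{\rm QVI}u=1-\alpha(1-\gamma)$;
\item the $(2,1)$ block is $U^\top A_\pi^{\rm QVI}u=\bigl(1-\alpha(1-\gamma)\bigr)U^\top u=0$;
\item the $(1,2)$ block is $c_\pi^\top:=u^\top A_\pi^{\rm QVI}U$, a row vector that generally does not vanish, since $P\Pi^\pi$ is not symmetric and $\mathbf{1}^\top P\Pi^\pi\neq\mathbf{1}^\top$ in general;
\item the $(2,2)$ block is exactly $\bar A_\pi^{\rm QVI}=U^\top A_\pi^{\rm QVI}U$ from \Cref{eq:projected-ql-family}.
\end{itemize}
Hence
\begin{equation*}
  T^\top A_\pi^{\rm QVI}T=
  \begin{bmatrix}
  1-\alpha(1-\gamma) & c_\pi^\top\\
  0 & \bar A_\pi^{\rm QVI}
  \end{bmatrix},
  \qquad \pi\in\Theta,
\end{equation*}
with the same block partition for every mode.

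The JSR is invariant under a simultaneous similarity of the whole family. This holds because $\|T^\top A_{\sigma_k}\cdots A_{\sigma_1}T\|\le\|T^\top\|\|T\|\,\|A_{\sigma_k}\cdots A_{\sigma_1}\|$, and the reverse inequality holds likewise; the constant factors disappear under the $k$th root. With $T$ orthogonal this is even an exact equality in the spectral norm. Therefore $\rho(\mathcal A^{\rm QVI})=\rho(T^\top\mathcal A^{\rm QVI}T)$.

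Applying \Cref{lem:block-triangular-jsr} to the transformed family gives
\begin{equation*}
  \rho(\mathcal A^{\rm QVI})=\max\Bigl\{\rho\bigl(\{1-\alpha(1-\gamma)\}\bigr),\ \rho(\bar{\mathcal A}^{\rm QVI})\Bigr\}.
\end{equation*}
The JSR of the singleton scalar family is $|1-\alpha(1-\gamma)|$. This equals $1-\alpha(1-\gamma)$ because $0<\alpha\le1$ and $\gamma\in(0,1)$ make it positive (\Cref{ass:qvi-alpha-stability}). This is the claimed identity.

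The only step needing real care is the block triangular decomposition: the off-diagonal blocks $c_\pi$ are coupled with the switching, so one cannot just read the JSR off the diagonal blocks mode by mode. That difficulty is handled entirely by the cited \Cref{lem:block-triangular-jsr}, which only requires the same block partition for all modes, as verified above. If a self-contained argument were needed instead, I would expand the product of triangular matrices. Its off-diagonal block is a sum of $k$ terms of the form $\lambda^{j}c^\top\bar A_{\text{word}}$, which can be bounded by $k\,C(\max\{\lambda,\rho(\bar{\mathcal A}^{\rm QVI})\}+\epsilon)^{k}$ using the JSR Lyapunov norm of \Cref{lem:common_lyapunov_construction} on the projected family. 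The polynomial factor $k$ vanishes under the $k$th root, and the lower bound is immediate since the diagonal blocks of a product are products of diagonal blocks.
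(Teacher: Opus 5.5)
Your proposal is correct and follows essentially the same route as the paper: the orthogonal basis $[\mathbf{1}/\sqrt n\ \ U]$ puts every mode into the block upper-triangular form of \Cref{lem:standard-constant-projected-block-form}, and \Cref{lem:block-triangular-jsr} then gives the maximum of the scalar-block and projected-block JSRs. You also make explicit two points the paper uses only implicitly: the invariance of the JSR under a simultaneous orthogonal similarity, and the positivity of $1-\alpha(1-\gamma)$.
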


We next compare the JSR of the projected SLS family,
$\rho(\bar{\mathcal A}^{\rm QVI})$, with the JSR of the original SLS family
$\rho(\mathcal A^{\rm QVI})$.
\begin{lemma}
\label{lem:projected-ql-weak-bound}
Under the standing discounted and relaxation assumptions, the projected standard
QVI family satisfies
\begin{align*}
  \rho(\bar{\mathcal A}^{\rm QVI})
  \le \rho(\mathcal A^{\rm QVI})
  =\bigl(1-\alpha(1-\gamma)\bigr).
\end{align*}
\end{lemma}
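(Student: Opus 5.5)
The inequality follows at once from \Cref{lem:qvi-common-eigenvector-projected-jsr-decomposition}, which gives
\begin{equation*}
  \rho(\mathcal A^{\rm QVI})
  =\max\left\{1-\alpha(1-\gamma),\rho(\bar{\mathcal A}^{\rm QVI})\right\}
  \ge \rho(\bar{\mathcal A}^{\rm QVI}),
\end{equation*}
combined with the value $\rho(\mathcal A^{\rm QVI})=1-\alpha(1-\gamma)$ from \Cref{prop:standard_q_jsr}. I would nevertheless also give a self-contained argument, so that the proof does not lean entirely on the decomposition lemma. That argument compares products of projected matrices with projections of products of the full matrices.

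The key structural fact comes from \Cref{lem:standard-qvi-common-eigenvector}: $\operatorname{span}\{\mathbf{1}\}$ is invariant under every $A_\pi^{\rm QVI}$. Hence for any $x$ we have $A_\pi^{\rm QVI}\frac1n\mathbf{1}\mathbf{1}^\top x\in\operatorname{span}\{\mathbf{1}\}$, and so $U^\top A_\pi^{\rm QVI}\frac1n\mathbf{1}\mathbf{1}^\top=0$ because $U^\top\mathbf{1}=0$. Using $UU^\top=\Pi_\perp=I-\frac1n\mathbf{1}\mathbf{1}^\top$, this yields the intertwining relation
\begin{equation*}
  U^\top A_\pi^{\rm QVI}=U^\top A_\pi^{\rm QVI}(UU^\top+\tfrac1n\mathbf{1}\mathbf{1}^\top)=\bar A_\pi^{\rm QVI}U^\top .
\end{equation*}
Iterating this relation over a word $\sigma$ gives $\bar A_{\sigma_k}^{\rm QVI}\cdots\bar A_{\sigma_1}^{\rm QVI}=U^\top A_{\sigma_k}^{\rm QVI}\cdots A_{\sigma_1}^{\rm QVI}U$. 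Since $\|U\|_2=\|U^\top\|_2=1$, every projected product of length $k$ has spectral norm at most that of the corresponding full product. Taking the maximum over words, then the $k$-th root, then $k\to\infty$ in \Cref{def:jsr} gives $\rho(\bar{\mathcal A}^{\rm QVI})\le\rho(\mathcal A^{\rm QVI})$.

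The equality $\rho(\mathcal A^{\rm QVI})=1-\alpha(1-\gamma)$ is then quoted from \Cref{prop:standard_q_jsr}. The only point needing care is the intertwining step, which uses invariance of $\mathbf{1}$ rather than invariance of $\mathbf{1}^\perp$. The latter need not hold, because $P\Pi^\pi$ is not symmetric. I expect no other obstacles.
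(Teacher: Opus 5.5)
Your proposal is correct, and its first paragraph is the paper's own proof: you read off $\rho(\bar{\mathcal A}^{\rm QVI})\le\rho(\mathcal A^{\rm QVI})$ from the decomposition in \Cref{lem:qvi-common-eigenvector-projected-jsr-decomposition} and quote the value $1-\alpha(1-\gamma)$ from \Cref{prop:standard_q_jsr}. Your supplementary intertwining argument ($U^\top A_\pi^{\rm QVI}=\bar A_\pi^{\rm QVI}U^\top$, so projected products equal $U^\top(\cdot)U$ of full products) is also valid and matches the paper's \Cref{lem:projected-product-norm-bound}, which obtains the same norm comparison from the block upper triangular form in the orthonormal basis $[\mathbf{1}/\sqrt{n}\ \ U]$; its only extra benefit is that it avoids invoking the block-triangular JSR lemma.
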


Therefore the weak inequality
$\rho(\bar{\mathcal A}^{\rm QVI})\le\bigl(1-\alpha(1-\gamma)\bigr)$ always holds.
The next section presents the heavy-ball QVI with a momentum term.

\section{Heavy-Ball QVI}
\label{sec:heavy_ball_bellman_residual}

This section introduces the heavy-ball QVI recursion and derives
its exact SLS representation. The update is given as
\begin{equation}
  Q_{k+1}
  =Q_k+\alpha(F(Q_k)-Q_k)+\eta(Q_k-Q_{k-1}).
  \label{eq:simple-heavy-ball}
\end{equation}
Equivalently, it can be written as
\begin{align*}
  Q_{k+1}=(1-\alpha+\eta)Q_k-\eta Q_{k-1}+\alpha F(Q_k).
\end{align*}
The parameter $\alpha$ is the step-size and $\eta$ is the heavy-ball
momentum parameter. When $\eta=0$, the update reduces to the standard QVI update in~\Cref{eq:standard-q-update}. A natural initialization is $Q_{-1}=Q_0$, so that the first
step coincides with the standard QVI step.
Define the augmented first-order mapping $g:\R^n\times\R^n\to\R^n\times\R^n$ by
\begin{equation*}
  g(x,y):=
  \begin{bmatrix}
    (1-\alpha+\eta)x-\eta y+\alpha F(x)\\
    x
  \end{bmatrix}.
\end{equation*}
Then, the heavy-ball recursion in~\Cref{eq:simple-heavy-ball} is equivalently
\begin{equation*}
\begin{aligned}
\begin{bmatrix}Q_{k+1}\\Q_k\end{bmatrix}
&=g(Q_k,Q_{k-1}).
\end{aligned}
\end{equation*}
Therefore, a fixed point of the heavy-ball algorithm in~\Cref{eq:simple-heavy-ball} is a pair $(x,y)$ satisfying
$g(x,y)=(x,y)$. The following lemma verifies that the fixed point of $g$ is
exactly the fixed point of the Bellman equation copied into both coordinates.
\begin{lemma}
\label{lem:hb-fixed-point-unchanged}
The fixed point of the mapping $g$ is $(Q^\star,Q^\star)$.
\end{lemma}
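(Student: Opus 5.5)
The plan is to prove the statement in two directions: first that $(Q^\star,Q^\star)$ is a fixed point of $g$, and then that every fixed point of $g$ equals it. Both directions are short algebraic manipulations, and the only nontrivial input is uniqueness of the Bellman fixed point $Q^\star=F(Q^\star)$ recalled in the preliminaries.

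For the first direction, I will substitute $(x,y)=(Q^\star,Q^\star)$ into $g$. The second block returns $Q^\star$ trivially. For the first block, I use $F(Q^\star)=Q^\star$:
\begin{align*}
(1-\alpha+\eta)Q^\star-\eta Q^\star+\alpha Q^\star=Q^\star.
\end{align*}
Hence $g(Q^\star,Q^\star)=(Q^\star,Q^\star)$.

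For the converse, suppose $g(x,y)=(x,y)$. The second block of this equation forces $y=x$. Substituting $y=x$ into the first block gives
\begin{align*}
x=(1-\alpha+\eta)x-\eta x+\alpha F(x)=(1-\alpha)x+\alpha F(x),
\end{align*}
which simplifies to $\alpha(F(x)-x)=0$. Because $\alpha>0$ by \Cref{ass:qvi-alpha-stability}, this yields $F(x)=x$. The only potential subtlety is invoking uniqueness: $F$ is a $\gamma$-contraction in the sup-norm with $\gamma<1$, so its fixed point is unique, and therefore $x=Q^\star$ and $y=Q^\star$.

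Combining the two directions shows that the fixed-point set of $g$ is exactly $\{(Q^\star,Q^\star)\}$. The momentum parameter $\eta$ cancels entirely, so no condition on $\eta$ is needed.
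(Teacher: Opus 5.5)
Your proof is correct and follows essentially the same route as the paper: the second block forces $y=x$, the first block then reduces to $\alpha\{F(x)-x\}=0$, and $\alpha>0$ together with uniqueness of the Bellman fixed point gives $x=y=Q^\star$, with the converse checked by direct substitution using $F(Q^\star)=Q^\star$. The only cosmetic differences are that you verify $g(Q^\star,Q^\star)=(Q^\star,Q^\star)$ first, and that you justify uniqueness explicitly through the sup-norm $\gamma$-contraction of $F$, whereas the paper simply cites it.
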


Therefore, heavy-ball QVI changes the transient SLS dynamics, but not the fixed
point of the Bellman equation. Next, subtracting the augmented fixed point gives the
first-coordinate error recursion
\begin{equation}
\begin{aligned}
Q_{k+1}-Q^\star
  &=(1-\alpha+\eta)(Q_k-Q^\star)\\
  &\quad-\eta(Q_{k-1}-Q^\star)
    +\alpha\{F(Q_k)-F(Q^\star)\}.
\end{aligned}
\label{eq:error-dynamic1}
\end{equation}
In the next lemma, we prove that this error evolution is expressed as an SLS.
\begin{lemma}
\label{lem:hb-exact-sls}
The error of the heavy-ball QVI in~\Cref{eq:error-dynamic1} satisfies the SLS recursion
\begin{equation}
  \begin{bmatrix}Q_{k+1}-Q^\star\\Q_k-Q^\star\end{bmatrix}
  =A_{\mu_k}^{\rm HB}
  \begin{bmatrix}Q_k-Q^\star\\Q_{k-1}-Q^\star\end{bmatrix},
  \label{eq:simple-error}
\end{equation}
where $\mu_k$ is a stochastic policy selector given by~\Cref{lem:bellman_difference_selector} in Appendix and
\begin{align*}
  A_\mu^{\rm HB}:=
  \begin{bmatrix}
    (1-\alpha+\eta)I+\alpha\gamma P\Pi^\mu & -\eta I\\
    I&0
  \end{bmatrix}
\end{align*}
for any stochastic policy $\mu$. The corresponding heavy-ball SLS family is
\begin{equation}
  \mathcal A^{\rm HB}:=\left\{A_\pi^{\rm HB}:\pi\in\Theta\right\}.
  \label{eq:hb-mode-set}
\end{equation}
For every stochastic selector $\mu_k$, one has
$A_{\mu_k}^{\rm HB}\in\co(\mathcal A^{\rm HB})$.
\end{lemma}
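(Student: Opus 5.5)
We will derive \Cref{eq:simple-error} directly from the first-coordinate error recursion \Cref{eq:error-dynamic1}. The only nonlinear term there is $\alpha\{F(Q_k)-F(Q^\star)\}$, so we first linearize it exactly. By \Cref{lem:bellman_difference_selector} (the Bellman-difference selector in the Appendix), there is a stochastic policy $\mu_k$, depending on $Q_k$ and $Q^\star$, such that
\begin{align*}
  F(Q_k)-F(Q^\star)=\gamma P(V_{Q_k}-V_{Q^\star})=\gamma P\Pi^{\mu_k}(Q_k-Q^\star).
\end{align*}
This is the same selector used for standard QVI in \Cref{lem:standard-q-error-system}. The idea behind it is that, for each state $s$, the difference $\max_a Q_k(s,a)-\max_a Q^\star(s,a)$ lies between the values $(Q_k-Q^\star)(s,a)$ at the greedy actions of $Q^\star$ and of $Q_k$. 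Hence it equals a convex combination of those two entries, and that convex combination defines $\mu_k(s)$.

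Substituting this identity into \Cref{eq:error-dynamic1} gives
\begin{align*}
  Q_{k+1}-Q^\star=\bigl((1-\alpha+\eta)I+\alpha\gamma P\Pi^{\mu_k}\bigr)(Q_k-Q^\star)-\eta(Q_{k-1}-Q^\star).
\end{align*}
We then append the trivial identity $Q_k-Q^\star=I\cdot(Q_k-Q^\star)+0\cdot(Q_{k-1}-Q^\star)$ as the second block row. Stacking the two rows yields exactly the block matrix $A_{\mu_k}^{\rm HB}$ acting on the augmented error, which is \Cref{eq:simple-error}.

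For the convex-hull claim, the key observation is that $A_\mu^{\rm HB}$ depends affinely on $\Pi^\mu$, and only through its $(1,1)$ block. By the convex-hull relation in \Cref{lem:bellman_difference_selector}, $\Pi^{\mu}=\sum_{\pi\in\Theta}\lambda_\pi\Pi^\pi$ for some weights $\lambda_\pi\ge0$ with $\sum_\pi\lambda_\pi=1$. One explicit choice is $\lambda_\pi=\prod_s\mu(s)_{\pi(s)}$, the product measure over states. Because the weights sum to one, the constant blocks $(1-\alpha+\eta)I$, $-\eta I$, $I$ and $0$ are each reproduced unchanged by the combination $\sum_\pi\lambda_\pi A_\pi^{\rm HB}$. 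The $(1,1)$ block then becomes $(1-\alpha+\eta)I+\alpha\gamma P\sum_\pi\lambda_\pi\Pi^\pi=(1-\alpha+\eta)I+\alpha\gamma P\Pi^{\mu}$. Therefore $A_{\mu_k}^{\rm HB}=\sum_\pi\lambda_\pi A_\pi^{\rm HB}\in\co(\mathcal A^{\rm HB})$.

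The only nontrivial step is the exact selector representation of the max-difference. Since that representation is supplied by the Appendix lemma, the rest of the argument is bookkeeping. The one point to check carefully is that affinity plus weights summing to one lets the constant blocks pass through the convex combination unchanged.
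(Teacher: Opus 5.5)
Your proposal is correct and follows the paper's proof. Both linearize $F(Q_k)-F(Q^\star)$ with the Bellman-difference selector, substitute it into the error recursion, and stack the result with the lag identity. Both then obtain convex-hull membership from the affine dependence of $A_\mu^{\rm HB}$ on $P\Pi^\mu$ together with the product-measure decomposition of $\Pi^\mu$. Your explicit check that the constant blocks survive the convex combination because the weights sum to one simply spells out what the paper compresses into ``linearity in $P\Pi^\mu$.''
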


As in the standard QVI SLS family, every heavy-ball mode leaves an augmented
common-eigenvector subspace invariant. Define
\begin{equation}
  C_\eta^-:=
  \begin{bmatrix}
    \bigl(1-\alpha(1-\gamma)\bigr)+\eta&-\eta\\
    1&0
  \end{bmatrix}.
  \label{eq:common-eigenvector-companion}
\end{equation}
Then, for every stochastic policy $\mu$ and all $x,y\in\R$,
\begin{align*}
  A_\mu^{\rm HB}
  \begin{bmatrix}x\mathbf{1}\\y\mathbf{1}\end{bmatrix}
  &=
  \begin{bmatrix}
    \{\bigl(1-\alpha(1-\gamma)\bigr)+\eta\}x\mathbf{1}-\eta y\mathbf{1}\\
    x\mathbf{1}
  \end{bmatrix}\\
  &=\left(C_\eta^-\otimes I_n\right)
  \begin{bmatrix}x\mathbf{1}\\y\mathbf{1}\end{bmatrix}.
\end{align*}
Therefore, the standard common invariant eigenvector direction is replaced by the
augmented invariant subspace
\begin{equation}
  \mathcal S_{\mathbf{1}}^{\rm HB}
  :=\left\{\begin{bmatrix}x\mathbf{1}\\y\mathbf{1}\end{bmatrix}:x,y\in\R\right\},
  \label{eq:hb-common-eigenvector-subspace}
\end{equation}
which satisfies
\begin{equation}
  A_\mu^{\rm HB}\mathcal S_{\mathbf{1}}^{\rm HB}
  \subseteq \mathcal S_{\mathbf{1}}^{\rm HB}  \label{eq:hb-common-eigenvector-invariance}
\end{equation}
for every stochastic policy $\mu$. These directions can still be a bottleneck for the heavy-ball JSR. The
next lemma provides the common eigenvector direction and gives the scalar recursion induced on that direction.
\begin{lemma}
\label{lem:hb-common-eigenvector-recursion}
If $Q_k-Q^\star=a_k\mathbf{1}$ and
$Q_{k-1}-Q^\star=a_{k-1}\mathbf{1}$, then
$Q_{k+1}-Q^\star=a_{k+1}\mathbf{1}$, where
\begin{equation}
  a_{k+1}=
  \bigl(\bigl(1-\alpha(1-\gamma)\bigr)+\eta\bigr)a_k
  -\eta a_{k-1}.
  \label{eq:common-eigenvector-scalar-recursion}
\end{equation}
Equivalently, with $C_\eta^-$ defined in~\Cref{eq:common-eigenvector-companion},
\begin{equation*}
  \begin{bmatrix}a_{k+1}\\a_k\end{bmatrix}
  =C_\eta^-
  \begin{bmatrix}a_k\\a_{k-1}\end{bmatrix}.
\end{equation*}
\end{lemma}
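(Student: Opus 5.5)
The plan is to substitute the ansatz directly into the exact switched-system form of \Cref{lem:hb-exact-sls}. This works because the common-eigenvector identity of \Cref{lem:standard-qvi-common-eigenvector} makes the switching irrelevant on $\operatorname{span}\{\mathbf{1}\}$. So it suffices to use the first block row of \Cref{eq:simple-error}:
\begin{equation*}
  Q_{k+1}-Q^\star
  =\bigl((1-\alpha+\eta)I+\alpha\gamma P\Pi^{\mu_k}\bigr)(Q_k-Q^\star)
  -\eta(Q_{k-1}-Q^\star),
\end{equation*}
where $\mu_k$ is the stochastic selector supplied by \Cref{lem:bellman_difference_selector}. We do not need to know which selector is active.

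Next, insert $Q_k-Q^\star=a_k\mathbf{1}$ and $Q_{k-1}-Q^\star=a_{k-1}\mathbf{1}$. By the row-stochastic identity \Cref{eq:policy-selector-row-stochastic}, $P\Pi^{\mu_k}\mathbf{1}=\mathbf{1}$ for every stochastic $\mu_k$, so the first term equals $(1-\alpha+\eta+\alpha\gamma)a_k\mathbf{1}=\bigl((1-\alpha(1-\gamma))+\eta\bigr)a_k\mathbf{1}$. The second term is $-\eta a_{k-1}\mathbf{1}$. Hence $Q_{k+1}-Q^\star$ is a multiple of $\mathbf{1}$ with coefficient given by \Cref{eq:common-eigenvector-scalar-recursion}.

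For the companion form, stack this scalar identity with the trivial identity $a_k=a_k$; this gives exactly $C_\eta^-$ from \Cref{eq:common-eigenvector-companion}. Equivalently, the result is the already displayed relation $A_\mu^{\rm HB}[x\mathbf{1};y\mathbf{1}]=(C_\eta^-\otimes I_n)[x\mathbf{1};y\mathbf{1}]$ read off coordinatewise.

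There is no real obstacle. The only point needing care is that the selector $\mu_k$ depends on $Q_k$. The argument therefore has to rely on \Cref{eq:policy-selector-row-stochastic} holding for \emph{every} stochastic policy, and not on any particular greedy choice. The exact SLS representation guarantees the error update has the stated affine-free form regardless of ties.
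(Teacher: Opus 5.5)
Your proposal is correct and takes essentially the same route as the paper. The paper substitutes $x=a_k$, $y=a_{k-1}$ into the invariant-subspace computation of $A_\mu^{\rm HB}$ acting on $(x\mathbf{1},y\mathbf{1})$, which rests on the same identity $P\Pi^\mu\mathbf{1}=\mathbf{1}$ for every stochastic $\mu$ that you use, and then stacks the scalar identity with $a_k=a_k$ to obtain $C_\eta^-$.
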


The above lemma shows that the standard scalar recursion from~\Cref{lem:standard-qvi-common-eigenvector} is replaced, on the invariant subspace associated with this common eigenvector, by the companion recursion with $C_\eta^-$ from~\Cref{eq:common-eigenvector-companion}. Consequently, the convergence factor $1-\alpha(1-\gamma)$ of the standard QVI SLS on this direction is replaced by the spectral radius $\rho(C_\eta^-)$ of the matrix $C_\eta^-$.
\begin{lemma}
\label{lem:hb-common-eigenvector-spectral-replacement}
On the invariant subspace defined in~\Cref{eq:hb-common-eigenvector-subspace}, the standard scalar convergence factor $1-\alpha(1-\gamma)$ is replaced by the spectral radius $\rho(C_\eta^-)$ of the heavy-ball block. Equivalently, the scalar factor $1-\alpha(1-\gamma)$ is replaced by the roots of the polynomial
\begin{equation*}
\begin{aligned}
\bigl\{1-\alpha(1-\gamma)\bigr\}
&\longmapsto
\{\lambda\in\mathbb C:\\
&\quad
\lambda^2-\bigl(1-\alpha(1-\gamma)+\eta\bigr)\lambda+
\eta=0\}.
\end{aligned}
\end{equation*}
Moreover,
\begin{align*}
  \rho(\mathcal A^{\rm HB})\ge \rho(C_\eta^-).
\end{align*}
\end{lemma}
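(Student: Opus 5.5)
The plan is to combine the exact restriction identity already recorded before \Cref{lem:hb-common-eigenvector-recursion} with a change of basis that makes every heavy-ball mode block upper triangular. The two claims of the statement are then handled separately.

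\emph{Spectral replacement.} By \Cref{lem:hb-common-eigenvector-recursion}, on $\mathcal S_{\mathbf{1}}^{\rm HB}$ the dynamics, in the coordinates $(a_k,a_{k-1})$, are exactly the linear recursion driven by the fixed matrix $C_\eta^-$. This matrix is the same for every mode and every stochastic policy $\mu$. Hence the asymptotic rate of $\|(a_k,a_{k-1})\|$ is $\rho(C_\eta^-)$, playing the role that the scalar $1-\alpha(1-\gamma)$ plays in \Cref{lem:standard-qvi-common-eigenvector}. Expanding $\det(\lambda I-C_\eta^-)=\lambda^2-(1-\alpha(1-\gamma)+\eta)\lambda+\eta$, which is a routine companion-matrix computation, shows that the eigenvalues of $C_\eta^-$ are exactly the stated roots. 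This gives the ``equivalently'' clause.

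\emph{Lower bound on the JSR.} I would choose the orthogonal basis $T:=\operatorname{diag}([\mathbf{1}/\sqrt n,\ U],[\mathbf{1}/\sqrt n,\ U])$ of $\R^{2n}$ and reorder coordinates so that the two $\mathbf{1}$-coordinates come first. The invariance in~\Cref{eq:hb-common-eigenvector-invariance} implies that every mode has the form
\begin{equation*}
  T^\top A_\pi^{\rm HB}T\simeq\begin{bmatrix}C_\eta^- & *\\ 0 & \bar A_\pi^{\rm HB}\end{bmatrix}.
\end{equation*}
Here the top-left block is $C_\eta^-$ itself, because the normalization by $\sqrt n$ is identical in both coordinates and so leaves the companion block unchanged. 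Similarity by a fixed orthogonal matrix preserves the JSR. By the block triangular JSR decomposition, \Cref{lem:block-triangular-jsr}, we get
\begin{equation*}
  \rho(\mathcal A^{\rm HB})=\max\{\rho(\{C_\eta^-\}),\rho(\bar{\mathcal A}^{\rm HB})\}\ge \rho(C_\eta^-).
\end{equation*}
For a singleton family the JSR is the ordinary spectral radius, by Gelfand's formula.

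A more elementary alternative avoids the decomposition lemma. Pick any deterministic $\pi$ and note that $(A_\pi^{\rm HB})^k$ restricted to $\mathcal S_{\mathbf{1}}^{\rm HB}$ equals $(C_\eta^-)^k\otimes$ identity on that subspace. Therefore $\|(A_\pi^{\rm HB})^k\|\ge c\|(C_\eta^-)^k\|$ for a constant $c>0$ that depends only on the embedding. Taking $k$-th roots and using $\rho(\mathcal A^{\rm HB})\ge\rho(A_\pi^{\rm HB})$, or applying \Cref{def:jsr} directly with constant switching, gives the bound.

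I expect the only delicate point to be bookkeeping: checking that the embedding of $(a_k,a_{k-1})$ into $\R^{2n}$ is a fixed isometry up to the factor $\sqrt n$, so that the induced matrix is exactly $C_\eta^-$ and no mode-dependent terms appear. The invariance identity shown just before \Cref{lem:hb-common-eigenvector-recursion} already supplies this.
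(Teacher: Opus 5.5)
Your proposal is correct. The spectral-replacement part matches the paper: the companion recursion from \Cref{lem:hb-common-eigenvector-recursion} together with the determinant of $\lambda I-C_\eta^-$. Your main route to $\rho(\mathcal A^{\rm HB})\ge\rho(C_\eta^-)$ is different, however.

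You conjugate every mode by the orthogonal matrix built from $[\mathbf{1}/\sqrt n\ \ U]$ in each block. This gives a block upper triangular form with diagonal blocks $C_\eta^-$ and $\bar A_\pi^{\rm HB}$, and you then invoke \Cref{lem:block-triangular-jsr}. This is exactly the computation the paper carries out later to prove \Cref{lem:hb-common-eigenvector-projected-jsr-decomposition}. In effect you prove the stronger identity $\rho(\mathcal A^{\rm HB})=\max\{\rho(C_\eta^-),\rho(\bar{\mathcal A}^{\rm HB})\}$ and read off the inequality. There is no circularity, since that identity rests only on the invariance of $\mathcal S_{\mathbf{1}}^{\rm HB}$ and the cited block-triangular fact.

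The paper's proof of this lemma argues directly instead. Every product maps $[x\mathbf{1};y\mathbf{1}]$ to $[x_k\mathbf{1};y_k\mathbf{1}]$ with $(x_k,y_k)^\top=(C_\eta^-)^k(x,y)^\top$. So product growth dominates that of $(C_\eta^-)^k$, and Gelfand's formula finishes. This is essentially your ``more elementary alternative.'' You fix a single $\pi$ and use constant switching, while the paper ranges over all words; either suffices. In the Euclidean norm your constant is $c=1$, because the embedding $(x,y)\mapsto(x\mathbf{1},y\mathbf{1})$ scales every vector by exactly $\sqrt n$.

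The trade-off is this. The restriction argument is self-contained and gives exactly the one-sided bound the lemma asks for. The block-triangular route depends on the externally cited decomposition lemma, but it delivers the two-sided characterization that the paper needs anyway for \Cref{thm:hb-global-jsr-improvement}.
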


Consequently, the convergence rate of the common eigenvector direction $\mathbf{1}$ can change under
heavy-ball momentum. It may become faster or slower depending on the parameters
$\eta$, $\alpha$, and $\gamma$, through the roots of the polynomial
in~\Cref{lem:hb-common-eigenvector-spectral-replacement}. The characteristic polynomial of $C_\eta^-$ is
\begin{equation}
  \lambda^2-(\bigl(1-\alpha(1-\gamma)\bigr)+\eta)\lambda+\eta=0.
  \label{eq:hb-common-eigenvector-poly}
\end{equation}
Thus, acceleration along the directions in the invariant subspace defined in~\Cref{eq:hb-common-eigenvector-subspace} is necessary for global JSR improvement.
We now present the exact condition on the momentum parameter $\eta>0$ for the acceleration.
\begin{proposition}
\label{prop:common_eigenvector_acceleration}
For the matrix $C_\eta^-$, we have
\begin{align*}
\rho(C_\eta^-)&<\rho(\mathcal A^{\rm QVI})=1-\alpha(1-\gamma)\\
&\Longleftrightarrow 0<\eta<\rho(\mathcal A^{\rm QVI})^2.
\end{align*}
Consequently, a necessary condition for the heavy-ball JSR to be smaller than
that of standard QVI is
\begin{align*}
  0<\eta<\rho(\mathcal A^{\rm QVI})^2.
\end{align*}
\end{proposition}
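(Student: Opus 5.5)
Write $r:=\rho(\mathcal A^{\rm QVI})=1-\alpha(1-\gamma)\in(0,1)$, using \Cref{prop:standard_q_jsr} and \Cref{ass:qvi-alpha-stability}. By \Cref{eq:hb-common-eigenvector-poly}, the eigenvalues of $C_\eta^-$ are the roots of
\[
p(\lambda)=\lambda^2-(r+\eta)\lambda+\eta .
\]
The plan is a direct root analysis of $p$ for $\eta>0$. Everything rests on two facts: the product of the roots is $\eta$, and $p$ factors nicely at $\lambda=r$. Indeed,
\[
p(r)=r^2-r^2-\eta r+\eta=\eta(1-r)>0 .
\]
We also have $p(1)=1-r>0$. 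Finally, the discriminant is $D=(r+\eta)^2-4\eta$.

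\textbf{Complex roots ($D<0$).} The two roots are conjugate, so $|\lambda|^2=\lambda\bar\lambda=\eta$ and $\rho(C_\eta^-)=\sqrt\eta$. Hence $\rho(C_\eta^-)<r$ if and only if $\eta<r^2$.

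\textbf{Real roots ($D\ge0$).} The product of the roots is $\eta>0$ and their sum is $r+\eta>0$, so both roots are positive. We have $\rho(C_\eta^-)<r$ exactly when both roots lie in $(0,r)$. Since $p(r)>0$, the point $r$ lies outside the closed interval between the roots. So both roots are below $r$ if and only if the vertex satisfies $(r+\eta)/2<r$, that is, $\eta<r$.

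Now reconcile this with $\eta<r^2$. If both roots are below $r$, their product is $\eta<r^2$. Conversely, assume $D\ge0$ and $\eta<r^2$. Then $\eta<r^2<r$ because $r<1$, so by the vertex criterion both roots lie below $r$. Combining the two cases gives $\rho(C_\eta^-)<r\iff 0<\eta<r^2$.

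The only delicate point is the boundary bookkeeping. At $\eta=r^2$ we get $\rho\ge\sqrt{\eta}=r$ in both regimes, since the product of the roots equals $r^2$. For $\eta>r^2$, the same product bound $\rho(C_\eta^-)^2\ge|\lambda_1\lambda_2|=\eta>r^2$ gives failure. This product argument in fact handles the necessity direction uniformly: $\rho(C_\eta^-)<r$ forces $\eta=|\lambda_1\lambda_2|<r^2$. So the main work is sufficiency, which is exactly the case analysis above.

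The "consequently" part then follows from \Cref{lem:hb-common-eigenvector-spectral-replacement}, which gives $\rho(\mathcal A^{\rm HB})\ge\rho(C_\eta^-)$. If $\rho(\mathcal A^{\rm HB})<\rho(\mathcal A^{\rm QVI})$, then $\rho(C_\eta^-)<r$, and therefore $0<\eta<r^2$.
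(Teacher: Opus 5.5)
Your proof is correct, but it argues differently from the paper. The paper substitutes $\lambda=rz$ with $r=1-\alpha(1-\gamma)$ and applies the second-order Jury criterion (\Cref{lem:jury-quadratic}) to $z^2-(1+\eta/r)z+\eta/r^2$. It reads off $1-a+b=\eta(1-r)/r^2>0\iff\eta>0$ and $1-b>0\iff\eta<r^2$, with $1+a+b>0$ then automatic.

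You instead locate the roots by hand. You treat the conjugate case via $|\lambda|^2=\eta$, and the real case via positivity of the roots, the sign of $p(r)$, and the position of the vertex. Your two key facts are the Jury conditions in disguise: $p(r)=\eta(1-r)=r^2(1-a+b)$, and the product-of-roots bound is $1-b>0$. The remaining condition $1+a+b>0$ (equivalently $p(-r)>0$) is replaced by your observation that real roots are positive.

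Your route is self-contained, whereas the paper cites Jury's criterion without proof. It also gives extra information, such as $\rho(C_\eta^-)=\sqrt{\eta}$ in the complex regime. The paper's route is shorter and gets the sign condition on $\eta$ for free from $1-a+b>0$.

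That sign condition is the one thing you skip. You assume $\eta>0$ at the outset, so the condition $0<\eta$ on the right-hand side is never derived from $\rho(C_\eta^-)<r$. This is harmless under the paper's convention $\eta>0$, but the paper also works with $\eta=0$. The fix is one line using your own computation: if $\eta\le0$ then $p(r)=\eta(1-r)\le0$, so the roots are real and $r$ lies between them, giving $\rho(C_\eta^-)\ge r$.

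The final implication via \Cref{lem:hb-common-eigenvector-spectral-replacement} is the same as in the paper.
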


The preceding proposition gives a lower-bound obstruction through the matrix $C_\eta^-$: if the recursion in~\Cref{eq:common-eigenvector-scalar-recursion} is not faster than the standard QVI benchmark, then the heavy-ball SLS family cannot have a smaller JSR. A complementary upper bound for the full heavy-ball SLS family is given in Appendix~\ref{app:hb-jsr-upper-bound} as \Cref{prop:hb-jsr-upper-bound}. It gives a simple sufficient condition on $\eta$ under which $\rho(\mathcal A^{\rm HB})<1$ holds automatically.

\section{Global JSR Improvement Certificate}
\label{sec:global_jsr_improvement}

The necessary condition in~\Cref{prop:common_eigenvector_acceleration} derived in the previous section is not sufficient by itself, because the transverse directions outside the invariant subspace in~\Cref{eq:hb-common-eigenvector-subspace} can also determine the JSR. The projection and the projected QVI SLS families were introduced at the end of~\Cref{sec:standard_q_baseline}; here we use them to impose the comparison condition directly on the projected heavy-ball QVI error system. The projection from~\Cref{eq:orthogonal-projection,eq:projected-ql-family} is now applied to the
heavy-ball modes from~\Cref{eq:hb-mode-set}.
\begin{lemma}
\label{lem:projected-hb-error-system}
For each deterministic policy $\pi$, let us define
\begin{equation}
\begin{aligned}
\bar A_\pi^{\rm HB}&:=
  \begin{bmatrix}
    \bar A_\pi^{\rm QVI}+\eta I_{n-1}&-\eta I_{n-1}\\
    I_{n-1}&0
  \end{bmatrix},\\
\bar{\mathcal A}^{\rm HB}&:=
  \left\{\bar A_\pi^{\rm HB}:\pi\in\Theta\right\}.
\end{aligned}
\label{eq:projected-hb-family}
\end{equation}
For a stochastic policy $\mu$, the same formula defines $\bar A_\mu^{\rm HB}$.
Indeed, if $z_k:=U^\top(Q_k-Q^\star)$, then the heavy-ball projected error
satisfies
\begin{equation*}
  \begin{bmatrix}z_{k+1}\\z_k\end{bmatrix}
  =\bar A_{\mu_k}^{\rm HB}
  \begin{bmatrix}z_k\\z_{k-1}\end{bmatrix}.
\end{equation*}
\end{lemma}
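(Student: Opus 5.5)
The plan is to left-multiply the first-coordinate error recursion~\Cref{eq:error-dynamic1} by $U^\top$ and then re-express everything in terms of $z_k$ and $z_{k-1}$, exactly as in the proof of \Cref{lem:projected-error-systems}. By \Cref{lem:hb-exact-sls}, $Q_{k+1}-Q^\star=\bigl((1-\alpha+\eta)I+\alpha\gamma P\Pi^{\mu_k}\bigr)(Q_k-Q^\star)-\eta(Q_{k-1}-Q^\star)$. Applying $U^\top$ gives
\begin{equation*}
z_{k+1}=U^\top\bigl((1-\alpha)I+\alpha\gamma P\Pi^{\mu_k}\bigr)(Q_k-Q^\star)+\eta z_k-\eta z_{k-1},
\end{equation*}
where the scalar multiples of the identity pass through $U^\top$ trivially. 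Equivalently, this is $U^\top A_{\mu_k}^{\rm QVI}(Q_k-Q^\star)+\eta z_k-\eta z_{k-1}$.

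The one nontrivial step is to show that $U^\top A_{\mu}^{\rm QVI}e=\bar A_\mu^{\rm QVI}U^\top e$ for every $e\in\R^n$. Here I would decompose $e=\Pi_\perp e+\frac{1}{n}\mathbf{1}\mathbf{1}^\top e=UU^\top e+c\mathbf{1}$. By \Cref{lem:standard-qvi-common-eigenvector}, $A_\mu^{\rm QVI}\mathbf{1}=(1-\alpha(1-\gamma))\mathbf{1}$. Since $U^\top\mathbf{1}=0$, the component $c\mathbf{1}$ contributes nothing after left-multiplication by $U^\top$. What remains is $U^\top A_\mu^{\rm QVI}UU^\top e=\bar A_\mu^{\rm QVI}U^\top e$. (This is presumably the content already used in \Cref{lem:projected-error-systems}, so I can cite that argument directly.) Applying it with $e=Q_k-Q^\star$ yields
\begin{equation*}
z_{k+1}=(\bar A_{\mu_k}^{\rm QVI}+\eta I_{n-1})z_k-\eta z_{k-1}.
\end{equation*}

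Stacking this with the trivial identity $z_k=z_k$ gives the block form with $\bar A_{\mu_k}^{\rm HB}$ as defined in~\Cref{eq:projected-hb-family}. For stochastic $\mu$, the map $\mu\mapsto \bar A_\mu^{\rm HB}$ is affine in $P\Pi^\mu$, so the same formula applies verbatim. If it is needed downstream, this also shows $\bar A_{\mu_k}^{\rm HB}\in\co(\bar{\mathcal A}^{\rm HB})$ via the convex-hull identity of \Cref{lem:bellman_difference_selector}.

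The main (mild) obstacle is the commutation step $U^\top A_\mu^{\rm QVI}=\bar A_\mu^{\rm QVI}U^\top$. It relies specifically on $\mathbf{1}$ being a right eigenvector of every mode; note that it is not a left eigenvector. The rest is bookkeeping.
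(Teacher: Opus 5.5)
Your proposal is correct and follows essentially the same route as the paper. The paper likewise writes the heavy-ball error as $(A_{\mu_k}^{\rm QVI}+\eta I)(Q_k-Q^\star)-\eta(Q_{k-1}-Q^\star)$, applies $U^\top$, invokes the identity $U^\top A_{\mu}^{\rm QVI}(I-UU^\top)=0$ from \Cref{lem:projected-error-systems}, and stacks the result with the lag identity; your decomposition $e=UU^\top e+\tfrac{1}{n}(\mathbf{1}^\top e)\mathbf{1}$ simply spells out that same projection step.
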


The next lemma applies the triangular
separation used in~\Cref{lem:qvi-common-eigenvector-projected-jsr-decomposition} to the augmented heavy-ball SLS family.
\begin{lemma}
\label{lem:hb-common-eigenvector-projected-jsr-decomposition}
For the heavy-ball SLS family $\mathcal A^{\rm HB}$ and the projected
heavy-ball SLS family $\bar{\mathcal A}^{\rm HB}$ defined in~\Cref{eq:hb-mode-set,eq:projected-hb-family}, it holds that
\begin{equation*}
  \rho(\mathcal A^{\rm HB})
  =
  \max\left\{\rho(C_\eta^-),\rho(\bar{\mathcal A}^{\rm HB})\right\}.
\end{equation*}
\end{lemma}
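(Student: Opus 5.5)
We will prove the identity by an orthogonal change of coordinates that puts every heavy-ball mode into block upper-triangular form. The diagonal blocks will be $C_\eta^-$ and $\bar A_\pi^{\rm HB}$. We then invoke the block triangular JSR decomposition, \Cref{lem:block-triangular-jsr}. This mirrors the argument behind \Cref{lem:qvi-common-eigenvector-projected-jsr-decomposition}.

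\textbf{Orthogonal basis.} Let $u:=\mathbf{1}/\sqrt n$, so that $W:=[u\ \ U]\in\R^{n\times n}$ is orthogonal. Define the orthogonal matrix
\begin{equation*}
T:=\begin{bmatrix} u & 0 & U & 0\\ 0 & u & 0 & U\end{bmatrix}\in\R^{2n\times 2n}.
\end{equation*}
Its first two columns span $\mathcal S_{\mathbf 1}^{\rm HB}$, and its remaining $2(n-1)$ columns span the orthogonal complement.

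\textbf{Block form of $T^\top A_\pi^{\rm HB} T$.} We compute this blockwise using three identities, each valid for every $\pi$ and following from $P\Pi^\pi\mathbf 1=\mathbf 1$ and $U^\top\mathbf 1=0$:
\begin{itemize}
\item $U^\top A_\pi^{\rm QVI}u=0$;
\item $u^\top A_\pi^{\rm QVI}u=1-\alpha(1-\gamma)$;
\item $U^\top A_\pi^{\rm QVI}U=\bar A_\pi^{\rm QVI}$.
\end{itemize}
Note that the first block of $A_\pi^{\rm HB}$ equals $A_\pi^{\rm QVI}+\eta I$.

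From the invariance \eqref{eq:hb-common-eigenvector-invariance}, the lower-left block, which maps the $(u,u)$ coordinates into the $(U,U)$ coordinates, vanishes. The upper-left $2\times2$ block is exactly $C_\eta^-$, by the computation preceding \eqref{eq:hb-common-eigenvector-subspace}. The lower-right block is
\begin{equation*}
\begin{bmatrix}U^\top(A_\pi^{\rm QVI}+\eta I)U & -\eta I_{n-1}\\ I_{n-1}&0\end{bmatrix}=\bar A_\pi^{\rm HB},
\end{equation*}
which matches \eqref{eq:projected-hb-family}. The upper-right block, $u^\top(\gamma\alpha P\Pi^\pi)U$ in the first row and zero elsewhere, is generally nonzero but plays no role.

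\textbf{Conclusion.} Thus $T^\top A_\pi^{\rm HB}T=\begin{bmatrix}C_\eta^- & *\\ 0 & \bar A_\pi^{\rm HB}\end{bmatrix}$ for every $\pi\in\Theta$. The same $T$ works for all modes, so the JSR is invariant under the similarity: $\rho(\mathcal A^{\rm HB})=\rho(T^\top\mathcal A^{\rm HB}T)$, since products transform by the same similarity and the JSR is norm-independent. Applying \Cref{lem:block-triangular-jsr} gives
\begin{equation*}
\rho(\mathcal A^{\rm HB})=\max\{\rho(\{C_\eta^-\}),\rho(\bar{\mathcal A}^{\rm HB})\}.
\end{equation*}
Finally, the JSR of the singleton $\{C_\eta^-\}$ equals $\rho(C_\eta^-)$ by Gelfand's formula.

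The main obstacle is bookkeeping rather than substance. We must verify that the zero lower-left block and the diagonal blocks come out exactly with the chosen ordering of columns in $T$. In particular, the interleaving of the $u$ and $U$ coordinates across the two copies must reproduce the companion structure of $C_\eta^-$ and of $\bar A_\pi^{\rm HB}$, and not a permuted version; a permutation would be harmless anyway, since it is a further common similarity.
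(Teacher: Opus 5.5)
Your proposal is correct and is essentially the paper's proof. The paper uses exactly the same orthogonal matrix $T$ (with $q=\mathbf{1}/\sqrt n$ in place of your $u$), obtains $T^\top A_\pi^{\rm HB}T=\begin{bmatrix}C_\eta^-&B_\pi\\0&\bar A_\pi^{\rm HB}\end{bmatrix}$ with $B_\pi$ nonzero only in its first row $u^\top A_\pi^{\rm QVI}U$, and concludes via \Cref{lem:block-triangular-jsr}. Your remarks that the JSR is invariant under this common orthogonal similarity, and that $\rho(\{C_\eta^-\})=\rho(C_\eta^-)$, make explicit two steps the paper leaves implicit.
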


A direct consequence of this decomposition is the following weak bound.
\begin{lemma}
\label{lem:projected-hb-weak-bound}
For the heavy-ball SLS family $\mathcal A^{\rm HB}$ and the projected
heavy-ball SLS family $\bar{\mathcal A}^{\rm HB}$ defined in~\Cref{eq:hb-mode-set,eq:projected-hb-family}, we have
\begin{align*}
  \rho(\bar{\mathcal A}^{\rm HB})\le \rho(\mathcal A^{\rm HB}).
\end{align*}
Thus the projected heavy-ball JSR is bounded by the full heavy-ball JSR.
\end{lemma}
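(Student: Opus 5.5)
The claim follows directly from \Cref{lem:hb-common-eigenvector-projected-jsr-decomposition}, which gives the identity
\begin{equation*}
  \rho(\mathcal A^{\rm HB})=\max\left\{\rho(C_\eta^-),\rho(\bar{\mathcal A}^{\rm HB})\right\}.
\end{equation*}
A maximum of two nonnegative numbers dominates each argument, so $\rho(\bar{\mathcal A}^{\rm HB})\le\rho(\mathcal A^{\rm HB})$ is immediate. The proof is therefore one line, provided the decomposition lemma is taken as established.

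To make the argument self-contained, I would also sketch why the decomposition holds. I would pick the orthogonal change of coordinates $T:=I_2\otimes\begin{bmatrix}\mathbf{1}/\sqrt n & U\end{bmatrix}$ on $\R^{2n}$. The invariance relation in \Cref{eq:hb-common-eigenvector-invariance} shows that each $T^\top A_\pi^{\rm HB}T$ is block upper triangular after a permutation grouping the $\mathbf{1}$-coordinates. The diagonal blocks of this form are $C_\eta^-$ and $\bar A_\pi^{\rm HB}$.

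To identify the lower-right block as $\bar A_\pi^{\rm HB}$, I would use $U^\top\mathbf{1}=0$ and $U^\top U=I$. These give $U^\top\bigl((1-\alpha+\eta)I+\alpha\gamma P\Pi^\pi\bigr)U=\bar A_\pi^{\rm QVI}+\eta I$, which matches \Cref{eq:projected-hb-family}.

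Next, the JSR is invariant under a common similarity transformation. The block triangular JSR fact (\Cref{lem:block-triangular-jsr}) then gives the maximum of the JSRs of the diagonal-block families. The family $\{C_\eta^-\}$ is a singleton, so its JSR is just $\rho(C_\eta^-)$.

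The only delicate point is the orientation of the triangular structure. The invariant subspace is $\mathcal S_{\mathbf{1}}^{\rm HB}$, so the projected block sits in the quotient position, and the zero block is the one mapping $\mathcal S_{\mathbf{1}}^{\rm HB}$ into its complement. Either orientation suffices, since \Cref{lem:block-triangular-jsr} handles both. Alternatively, one can avoid the decomposition entirely: projected products equal compressions $(I_2\otimes U)^\top A_{\sigma}(I_2\otimes U)$ because the complement coordinates evolve autonomously. This gives $\|\bar A_\sigma\|\le\|A_\sigma\|$ in the $2$-norm, and taking $k$th roots and limits yields the inequality directly.
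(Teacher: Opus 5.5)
Your proposal is correct and matches the paper's proof, which likewise obtains $\rho(\bar{\mathcal A}^{\rm HB})\le\rho(\mathcal A^{\rm HB})$ in one line from the identity $\rho(\mathcal A^{\rm HB})=\max\{\rho(C_\eta^-),\rho(\bar{\mathcal A}^{\rm HB})\}$ of \Cref{lem:hb-common-eigenvector-projected-jsr-decomposition}; your sketch of that decomposition uses the same augmented change of coordinates as the paper. Your alternative compression argument is also sound, because invariance of $\mathcal S_{\mathbf{1}}^{\rm HB}$ makes $(I_2\otimes U)^\top A_\sigma(I_2\otimes U)$ multiplicative over words; it is the heavy-ball analogue of \Cref{lem:projected-product-norm-bound} and avoids the block-triangular JSR lemma entirely.
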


When $\eta=0$, the heavy-ball QVI update in~\Cref{eq:simple-heavy-ball} reduces to
standard QVI in its first block, with the second block only storing the previous
iterate. The same zero-momentum relation holds at the switching-family level:
the lag-augmented matrices have the same asymptotic product growth as the
corresponding standard matrices. We first present this fact for the full
family and then for the projected family.
\begin{lemma}
\label{lem:full-hb-zero-momentum}
At $\eta=0$, the full heavy-ball construction gives
\begin{equation*}
  \rho(\mathcal A^{\rm HB})=\rho(\mathcal A^{\rm QVI}).
\end{equation*}
\end{lemma}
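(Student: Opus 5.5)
At $\eta=0$ each heavy-ball mode becomes block lower triangular:
\begin{equation*}
  A_\pi^{\rm HB}\big|_{\eta=0}
  =\begin{bmatrix}
    A_\pi^{\rm QVI} & 0\\
    I & 0
  \end{bmatrix}.
\end{equation*}
The plan is to compare products of these matrices directly with products of the standard QVI modes; the block triangular JSR lemma (\Cref{lem:block-triangular-jsr}) is a possible shortcut but is not needed. First I will compute the product of $k\ge1$ factors for an arbitrary word $\pi_1,\ldots,\pi_k$. Write $M_k:=A_{\pi_k}^{\rm QVI}\cdots A_{\pi_1}^{\rm QVI}$ and $M_{k-1}:=A_{\pi_{k-1}}^{\rm QVI}\cdots A_{\pi_1}^{\rm QVI}$ (empty product $=I$). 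A short induction, using that the second block column of every factor is zero, gives
\begin{equation*}
  A_{\pi_k}^{\rm HB}\cdots A_{\pi_1}^{\rm HB}
  =\begin{bmatrix}
    M_k & 0\\
    M_{k-1} & 0
  \end{bmatrix}.
\end{equation*}
The inductive step is one line: left-multiplying by $\begin{bmatrix}A & 0\\ I & 0\end{bmatrix}$ sends the first block column $(M_{k-1},M_{k-2})$ to $(AM_{k-1},M_{k-1})$.

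The bounds then follow in two directions. For the upper bound, the spectral norm of the product is at most $\|M_k\|_2+\|M_{k-1}\|_2$. Fix $\epsilon>0$ and let $\rho:=\rho(\mathcal A^{\rm QVI})$. By the definition of the JSR there is a constant $c_\epsilon$ with $\|M_j\|_2\le c_\epsilon(\rho+\epsilon)^j$ for all $j$ and all words. Hence the product norm is at most $c_\epsilon(\rho+\epsilon)^{k-1}\bigl((\rho+\epsilon)+1\bigr)$. Taking $k$th roots and letting $k\to\infty$ and then $\epsilon\to0$ gives $\rho(\mathcal A^{\rm HB})\le\rho$. For the lower bound, the product norm is at least $\|M_k\|_2$, because the $(1,1)$ block of a matrix has norm at most the norm of the whole matrix. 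So the supremum over words of length $k$ dominates $\sup\|M_k\|_2$, and $\rho(\mathcal A^{\rm HB})\ge\rho$. A quicker route to the lower bound is \Cref{lem:hb-common-eigenvector-spectral-replacement}: at $\eta=0$ the companion matrix $C_0^-$ has eigenvalues $1-\alpha(1-\gamma)$ and $0$, so $\rho(\mathcal A^{\rm HB})\ge 1-\alpha(1-\gamma)=\rho(\mathcal A^{\rm QVI})$ by \Cref{prop:standard_q_jsr}.

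There is no real obstacle. The only points needing care are the $k$th-root limit in the upper bound, where the extra $M_{k-1}$ term contributes only a bounded factor that disappears in the limit, and keeping the product convention consistent with \Cref{def:jsr}, so that the lag block is $M_{k-1}$ and not some other partial product.
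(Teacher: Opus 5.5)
Your proposal is correct and follows essentially the same route as the paper's proof. The paper also writes the zero-momentum product as $\begin{bmatrix} M_k & 0\\ M_{k-1} & 0\end{bmatrix}$, takes the lower bound from the upper-left block, and takes the upper bound from the block-column estimate $\|M_k\|_2+\|M_{k-1}\|_2$ of \Cref{lem:block-column-norm-bound}; the only difference is that it handles the $k$th-root limit with a factor $2^{1/k}$ and an exponent rescaling $(k-1)/k$ instead of your envelope $\|M_j\|_2\le c_\epsilon(\rho+\epsilon)^j$, which is equivalent.
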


\begin{lemma}
\label{lem:projected-hb-zero-momentum}
At $\eta=0$, the projected heavy-ball construction gives
\begin{equation*}
  \rho(\bar{\mathcal A}^{\rm HB})=\rho(\bar{\mathcal A}^{\rm QVI}).
\end{equation*}
\end{lemma}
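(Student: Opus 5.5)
At $\eta=0$ the projected heavy-ball modes in~\Cref{eq:projected-hb-family} reduce to
\begin{equation*}
  \bar A_\pi^{\rm HB}\big|_{\eta=0}
  =\begin{bmatrix}\bar A_\pi^{\rm QVI}&0\\ I_{n-1}&0\end{bmatrix}.
\end{equation*}
The plan is to compare finite products of these block matrices with products of the projected QVI matrices. We then pass to the limit in~\Cref{def:jsr}. A block lower-triangular argument via \Cref{lem:block-triangular-jsr} would also work. That lemma gives the JSR of a block triangular family as the maximum of the diagonal-block JSRs, here $\max\{\rho(\bar{\mathcal A}^{\rm QVI}),\rho(\{0\})\}=\rho(\bar{\mathcal A}^{\rm QVI})$. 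However, the family is block \emph{lower} triangular with respect to the ordering $(z_k,z_{k-1})$, so I would first check that the cited lemma covers this orientation. If it does not, I would conjugate by the block permutation swapping the two coordinates, which turns it into an upper triangular family with diagonal blocks $0$ and $\bar A_\pi^{\rm QVI}$. This conjugation is common to all modes and therefore preserves the JSR.

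To keep the proof self-contained, I would also give the direct product computation. By induction on $k\ge1$, for any deterministic policies $\pi_1,\ldots,\pi_k$,
\begin{equation*}
  \bar A_{\pi_k}^{\rm HB}\cdots\bar A_{\pi_1}^{\rm HB}
  =\begin{bmatrix}\bar A_{\pi_k}^{\rm QVI}\cdots \bar A_{\pi_1}^{\rm QVI}&0\\ \bar A_{\pi_{k-1}}^{\rm QVI}\cdots\bar A_{\pi_1}^{\rm QVI}&0\end{bmatrix}.
\end{equation*}
For $k=1$ the lower-left block is the empty product $I_{n-1}$. The inductive step is just one multiplication by the block form above.

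With this formula, I would bound the norm from both sides. From below, the operator norm of the product is at least the norm of its upper-left block, $\|\bar A_{\pi_k}^{\rm QVI}\cdots\bar A_{\pi_1}^{\rm QVI}\|_2$. From above, it is at most the sum of the norms of the two nonzero blocks. Taking the maximum over words of length $k$ and writing $M_k:=\max_{\pi_1,\ldots,\pi_k}\|\bar A_{\pi_k}^{\rm QVI}\cdots\bar A_{\pi_1}^{\rm QVI}\|_2$ (with $M_0:=1$), this gives
\begin{equation*}
  M_k\le \max\|\text{HB product}\|_2\le M_k+M_{k-1}.
\end{equation*}
Taking $k$-th roots and letting $k\to\infty$, the lower bound gives $\rho(\bar{\mathcal A}^{\rm HB})\ge\rho(\bar{\mathcal A}^{\rm QVI})$.

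The main obstacle is the upper bound, where one must show that $(M_k+M_{k-1})^{1/k}\to\rho(\bar{\mathcal A}^{\rm QVI})$. I would use that for any $\epsilon>0$ there is a constant $c_\epsilon$ with $M_j\le c_\epsilon(\rho(\bar{\mathcal A}^{\rm QVI})+\epsilon)^j$ for all $j$, which follows from the definition of the JSR as a limit. Then
\begin{equation*}
  M_k+M_{k-1}\le c_\epsilon\bigl(1+(\rho(\bar{\mathcal A}^{\rm QVI})+\epsilon)^{-1}\bigr)(\rho(\bar{\mathcal A}^{\rm QVI})+\epsilon)^k.
\end{equation*}
Taking $k$-th roots, the constant factor disappears in the limit, and letting $\epsilon\to0$ gives $\rho(\bar{\mathcal A}^{\rm HB})\le\rho(\bar{\mathcal A}^{\rm QVI})$.

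The degenerate case $\rho(\bar{\mathcal A}^{\rm QVI})=0$ needs no special treatment, since this bound holds for every $\epsilon>0$ regardless. Combining the two inequalities yields $\rho(\bar{\mathcal A}^{\rm HB})=\rho(\bar{\mathcal A}^{\rm QVI})$. This mirrors the argument one would use for \Cref{lem:full-hb-zero-momentum}.
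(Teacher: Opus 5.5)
Your proposal is correct and takes essentially the same route as the paper. The paper's proof defers to the argument for \Cref{lem:full-hb-zero-momentum}, which uses the explicit zero-momentum product formula, the lower bound from the upper-left block, and the upper bound from the block-column norm bound; your $c_\epsilon$ estimate plays the role of the paper's $2^{1/k}$ and $(k-1)/k$ limit argument, and your alternative of conjugating by the block swap and applying \Cref{lem:block-triangular-jsr} is also valid and even shorter.
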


We now state a sufficient condition under which heavy-ball QVI has a smaller
JSR than standard QVI.

\begin{theorem}
\label{thm:hb-global-jsr-improvement}
If
\begin{equation}
  \rho(\bar{\mathcal A}^{\rm QVI})<\rho(\mathcal A^{\rm QVI}),
  \label{eq:projected-ql-gap}
\end{equation}
then there exists $\eta_0>0$ such that every
\begin{equation*}
  0<\eta<\min\left\{\rho(\mathcal A^{\rm QVI})^2,\eta_0\right\}
\end{equation*}
satisfies
\begin{equation*}
  \rho(\mathcal A^{\rm HB})
  <\rho(\mathcal A^{\rm QVI}).
\end{equation*}
Consequently, the deterministic heavy-ball QVI error recursion converges
exponentially to zero with a worst-case JSR strictly smaller than that of
standard QVI.
\end{theorem}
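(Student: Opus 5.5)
The plan is to use the decomposition of \Cref{lem:hb-common-eigenvector-projected-jsr-decomposition}, $\rho(\mathcal A^{\rm HB})=\max\{\rho(C_\eta^-),\rho(\bar{\mathcal A}^{\rm HB})\}$, and to show that each term in the maximum lies strictly below $\rho(\mathcal A^{\rm QVI})=1-\alpha(1-\gamma)$ for all small $\eta>0$. The first term is handled by \Cref{prop:common_eigenvector_acceleration}: for every $0<\eta<\rho(\mathcal A^{\rm QVI})^2$ we already have $\rho(C_\eta^-)<\rho(\mathcal A^{\rm QVI})$. The remaining work is the projected family.

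For the projected family, write $\bar{\mathcal A}^{\rm HB}(\eta)$ to stress the dependence on $\eta$. By \Cref{lem:projected-hb-zero-momentum}, $\rho(\bar{\mathcal A}^{\rm HB}(0))=\rho(\bar{\mathcal A}^{\rm QVI})$, and by hypothesis~\eqref{eq:projected-ql-gap} this is strictly less than $\rho(\mathcal A^{\rm QVI})$. So I would invoke continuity of the JSR with respect to the matrix family. The family is finite, indexed by $\Theta$, and each matrix $\bar A_\pi^{\rm HB}(\eta)$ depends affinely on $\eta$, so the family moves continuously in the Hausdorff metric. The JSR is continuous on bounded families; it is upper semicontinuous in general and continuous because it equals both the inf over $k$ of the $k$th norm bound and the sup over $k$ of the spectral-radius bound.

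If I want a self-contained argument instead of citing continuity, I would make the upper semicontinuity explicit. Fix $r$ with $\rho(\bar{\mathcal A}^{\rm QVI})<r<\rho(\mathcal A^{\rm QVI})$. Since $\rho(\bar{\mathcal A}^{\rm HB}(0))<r$, there is a $k$ with $\max_{\sigma}\|\bar A_\sigma(0)\|^{1/k}<r$ over words of length $k$. Each such finite product is a polynomial in $\eta$, and there are finitely many words. Hence there is $\eta_0>0$ with $\max_\sigma\|\bar A_\sigma(\eta)\|^{1/k}<r$ for $0\le\eta<\eta_0$. By submultiplicativity, $\rho(\bar{\mathcal A}^{\rm HB}(\eta))\le\max_\sigma\|\bar A_\sigma(\eta)\|^{1/k}<r$. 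Only upper semicontinuity is needed, so this step is elementary.

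Combining the two bounds, for $0<\eta<\min\{\rho(\mathcal A^{\rm QVI})^2,\eta_0\}$ both terms in the maximum are strictly below $\rho(\mathcal A^{\rm QVI})$. Therefore $\rho(\mathcal A^{\rm HB})<\rho(\mathcal A^{\rm QVI})\le1$. The convergence claim then follows from \Cref{lem:standard_jsr_convergence} applied to the augmented error recursion~\eqref{eq:simple-error}, using that $A_{\mu_k}^{\rm HB}\in\co(\mathcal A^{\rm HB})$ by \Cref{lem:hb-exact-sls}. The main obstacle is the perturbation step: the argument relies on the fact that strict inequality $\rho<r$ is witnessed by a single finite product length, which gives an open condition in $\eta$. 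I expect no difficulty beyond stating that carefully.
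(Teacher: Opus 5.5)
Your proposal is correct and follows essentially the same route as the paper. Like the paper, you use the decomposition $\rho(\mathcal A^{\rm HB})=\max\{\rho(C_\eta^-),\rho(\bar{\mathcal A}^{\rm HB})\}$, \Cref{prop:common_eigenvector_acceleration} for the companion block, the zero-momentum lemma plus small-$\eta$ perturbation for the projected block, and \Cref{lem:basic_jsr_convergence} for convergence. The only difference is that you replace the paper's citation of full JSR continuity with an explicit upper-semicontinuity argument through a single product length $k$, which is all that is needed here.
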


Next, we briefly note how the proposed heavy-ball QVI update can be extended to a
stochastic heavy-ball Q-learning setting. A closely related deterministic Q-learning recursion is obtained by weighting the
Bellman residual with the diagonal sampling matrix $D$ and keeping the same
heavy-ball lag term. Under the column-vector convention used throughout this
paper, the update is
\begin{equation*}
  Q_{k+1}=Q_k+\alpha D\{F(Q_k)-Q_k\}+\eta(Q_k-Q_{k-1}).
\end{equation*}
The diagonal matrix $D$ weights the Bellman fixed-point residual, while $\eta(Q_k-Q_{k-1})$ is the
momentum term. This deterministic recursion is a natural starting point for
stochastic reinforcement learning extensions with sampled coordinate updates. However, after
inserting $D$, the common-eigenvector argument used above no longer applies in
general: the corresponding modes need not map the all-ones vector to a scalar
multiple of itself unless the diagonal weights are uniform.
Therefore, the common-eigenvector/quotient-space proof in this paper does not directly
certify such stochastic reinforcement learning variants, and a separate analysis is needed.
These stochastic and coordinate-sampled deflated-momentum variants are therefore left as future topics.

The momentum mechanism above is similar in spirit to the deflated QVI and rank-one modified value-iteration methods studied by~\citet{bertsekas1995rankone,kolarijani2025rankone,lee2025deflateddynamics,lee2026deflated} in that they exploit related low-rank or common-direction structure. The
mechanisms, however, are different. The heavy-ball recursion accelerates the
common eigenvector direction by replacing the scalar recursion with the companion block
$C_\eta^-$, and it also changes the projected lag family
$\bar{\mathcal A}^{\rm HB}$. Thus, depending on the problem and on $\eta$, one can
have
\begin{align*}
  \rho(\bar{\mathcal A}^{\rm HB})<\rho(\bar{\mathcal A}^{\rm QVI}),
\end{align*}
which gives additional acceleration beyond the common-eigenvector improvement. The
projected term is not automatically controlled by the scalar common-eigenvector
argument and must be checked separately. Deflated QVI instead removes the
component along this common eigenvector by a rank-one residual correction and leaves the quotient
trajectory of standard QVI unchanged. In this sense,
heavy-ball momentum can accelerate the common eigenvector direction and may also
accelerate transverse dynamics, whereas deflated QVI removes only
this direction rather than adding a momentum recursion to the projected modes.

An extension to projected Q-value iteration (PQVI) with linear function approximation (LFA) is collected in Appendix~\ref{app:hb-lfa}; the setup and the PQVI construction appear in Appendix~\ref{app:lfa-setup} and Appendix~\ref{app:pqvi-lfa}, respectively. In brief, the same projected-error construction applies when the feature space contains a constant feature: the appendix defines the weighted projection, derives the standard and heavy-ball PQVI SLS families, and gives a small-momentum JSR improvement certificate.

\section{Example}
\label{sec:example}

We now give a small stochastic example that compares standard QVI and heavy-ball
QVI on a simple discounted MDP. The state and action sets are
$\mathcal S=\{1,2\}$ and $\mathcal A=\{1,2\}$, and the discount factor is
$\gamma=0.95$. The transition kernel is stochastic: $P(1\mid 1,1)=0.5027$,
$P(2\mid 1,1)=0.4973$, $P(1\mid 1,2)=0.7461$,
$P(2\mid 1,2)=0.2539$, $P(1\mid 2,1)=0.5617$,
$P(2\mid 2,1)=0.4383$, $P(1\mid 2,2)=0.4418$, and
$P(2\mid 2,2)=0.5582$. The one-step reward is defined by $r(1,1,1)=0.10$, $r(1,1,2)=0.35$,
$r(1,2,1)=0.60$, $r(1,2,2)=0.80$, $r(2,1,1)=0.40$,
$r(2,1,2)=1.00$, $r(2,2,1)=0.95$, and $r(2,2,2)=0.15$.
Starting from $Q_{-1}=Q_0=0$, we compare standard QVI in~\Cref{eq:standard-q-update}
with $\alpha=1$ and heavy-ball QVI in~\Cref{eq:intro-hb-update} with
$\alpha=1$ and $\eta=0.60$. Iterating the Bellman optimality operator to high
accuracy gives $Q^\star(1,1)\approx12.6640$,
$Q^\star(1,2)\approx13.0870$, $Q^\star(2,1)\approx13.1019$, and
$Q^\star(2,2)\approx12.9440$. \Cref{fig:stochastic-two-action-example} plots the error
$\|Q_k-Q^\star\|_\infty$ over 120 iterations. In this example, the error at
iteration 120 is approximately $2.779\times 10^{-2}$ for standard QVI and
$1.211\times 10^{-10}$ for heavy-ball QVI.
\begin{figure}[t]
\centering
\includegraphics[width=\columnwidth]{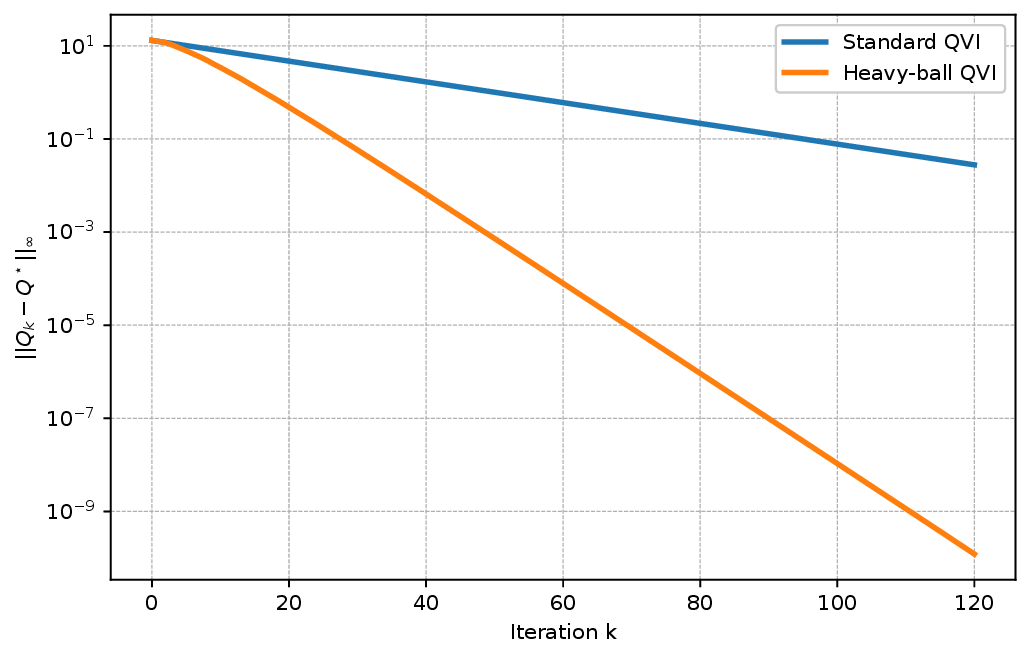}
\caption{Error curves for the MDP instance in~\Cref{sec:example}. The heavy-ball QVI curve uses $\alpha=1$ and $\eta=0.60$.}
\label{fig:stochastic-two-action-example}
\end{figure}

\section{Example}
\label{sec:example2}
We give a tabular control example using the OpenAI Gym/Gymnasium \texttt{Taxi} environment. The
model has 500 states and 6 actions, and we use $\gamma=0.99$. Both methods start
from $Q_{-1}=Q_0=0$ and use the same relaxation parameter $\alpha=0.5$.
Heavy-ball QVI uses $\eta=0.15$, selected from the grid
$\{0,0.05,\ldots,0.35\}$ by the final error after 100 iterations. A reference
$Q^\star$ is computed by exact QVI to high accuracy, and the figure plots
$\|Q_k-Q^\star\|_\infty$.
\begin{figure}[h]
\centering
\includegraphics[width=0.98\columnwidth]{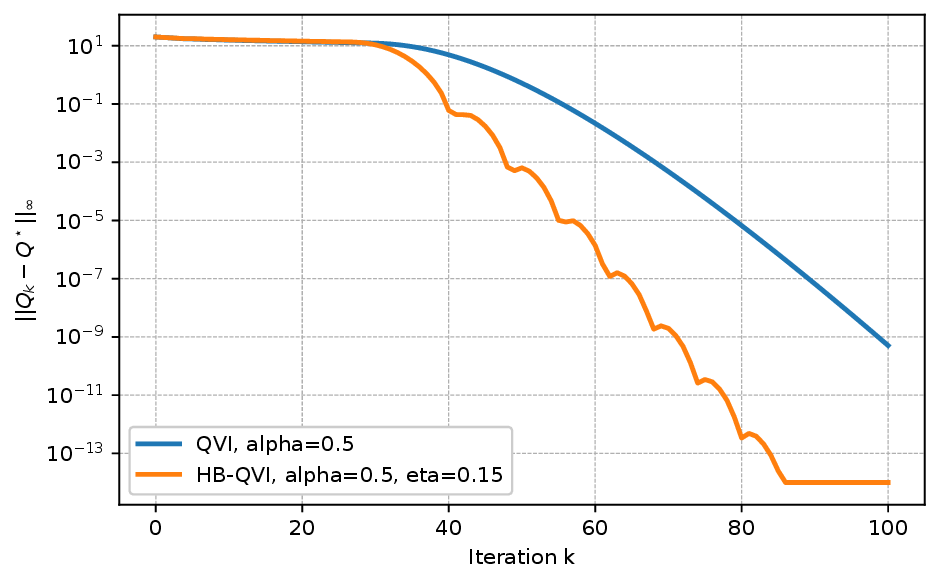}
\caption{Taxi example: QVI and heavy-ball QVI error curves.}
\label{fig:taxi-qvi-hb-example}
\end{figure}
In this example, heavy-ball QVI reduces the error faster after a short transient.
At iteration 100, the infinity-norm error is about $5.06\times10^{-10}$ for QVI
and $7.11\times10^{-15}$ for heavy-ball QVI; both methods recover the same greedy
policy as $Q^\star$.

\section{Conclusion}
\label{sec:conclusion}

This paper studied the convergence and acceleration of heavy-ball QVI through
SLS theory and the JSR. The heavy-ball recursion was rewritten exactly as an SLS around the Bellman
fixed point. Its worst-case asymptotic rate is therefore described by its JSR
rather than by the eigenvalues of a single fixed-policy matrix. The analysis
separates the common eigenvector component from the remaining projected
policy-dependent dynamics: the heavy-ball term can improve the common component,
but global acceleration is certified only when the projected JSR remains below
the standard QVI benchmark.

\clearpage
\onecolumn
\appendix
\section*{\LARGE Appendix}
\section{Auxiliary Facts}
\label{app:auxiliary-facts}

\subsection{JSR Structural Facts}
\label{app:jsr-structural-facts}

\begin{lemma}
\label{lem:convex-hull-jsr}
For a finite matrix family $\mathcal H$, one has
\begin{align*}
  \rho(\co(\mathcal H))=\rho(\mathcal H),
\end{align*}
where the left-hand side is computed over all convex combinations of matrices in
$\mathcal H$.
\end{lemma}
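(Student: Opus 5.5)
The statement to prove is Lemma~\ref{lem:convex-hull-jsr}, $\rho(\co(\mathcal H))=\rho(\mathcal H)$. The plan is to prove two inequalities separately. The easy direction, $\rho(\mathcal H)\le\rho(\co(\mathcal H))$, is immediate from the definition. Since $\mathcal H\subseteq\co(\mathcal H)$, for every length $k$ the supremum of $\|A_k\cdots A_1\|^{1/k}$ over $\co(\mathcal H)$ dominates the maximum over $\mathcal H$. Passing to the limit in Definition~\ref{def:jsr} gives the inequality. Note that $\co(\mathcal H)$ is bounded, because it is the convex hull of finitely many matrices, so its JSR is well defined.

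For the reverse inequality, fix $k$ and take arbitrary $B_1,\ldots,B_k\in\co(\mathcal H)$, written as $B_j=\sum_{i}\lambda^{(j)}_i A_i$ with $\lambda^{(j)}\in\Delta_M$. Expanding the product multilinearly gives
\begin{align*}
B_k\cdots B_1=\sum_{i_1,\ldots,i_k}\lambda^{(k)}_{i_k}\cdots\lambda^{(1)}_{i_1}\,A_{i_k}\cdots A_{i_1}.
\end{align*}
This exhibits $B_k\cdots B_1$ as a convex combination of length-$k$ products from $\mathcal H$, because the coefficients are nonnegative and sum to $\prod_j\sum_i\lambda^{(j)}_i=1$. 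By the triangle inequality and convexity of the norm,
\begin{align*}
\|B_k\cdots B_1\|\le\max_{\sigma\in\{1,\ldots,M\}^k}\|A_\sigma\|.
\end{align*}
Taking the supremum over the $B_j$ shows that the length-$k$ quantities for $\co(\mathcal H)$ and $\mathcal H$ coincide for every $k$. Taking $k$-th roots and the limit then gives $\rho(\co(\mathcal H))\le\rho(\mathcal H)$.

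The only point that needs care is that the limit in Definition~\ref{def:jsr} exists for both families. I would cite the standard fact that $s_k:=\sup\|A_k\cdots A_1\|$ is submultiplicative for any bounded family, so Fekete's lemma gives existence of $\lim s_k^{1/k}=\inf_k s_k^{1/k}$. Since the argument above shows the sequences $s_k$ are actually equal for $\mathcal H$ and $\co(\mathcal H)$, existence for one family transfers to the other. No genuine obstacle arises; the key step is the multilinear expansion showing that products of convex combinations are convex combinations of products.
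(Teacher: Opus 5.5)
Your proposal is correct and takes the same route as the paper. The inclusion $\mathcal H\subseteq\co(\mathcal H)$ gives one inequality, and the multilinear expansion of products of convex combinations, together with the triangle inequality, bounds the length-$k$ suprema in the other direction; one then takes $k$th roots and the limit. Your extra observations, that the length-$k$ suprema actually coincide and that the limit exists by Fekete's lemma (for a submultiplicative norm), are harmless refinements that the paper leaves implicit.
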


\begin{lemma}
\label{lem:block-triangular-jsr}
Let $\mathcal T$ be a bounded family of block upper triangular matrices of the
form
\begin{align*}
  T_i=\begin{bmatrix}B_i&E_i\\0&D_i\end{bmatrix}.
\end{align*}
Then
\begin{align*}
  \rho(\mathcal T)
  =\max\left\{
    \rho\left(\left\{B_i:T_i\in\mathcal T\right\}\right),
    \rho\left(\left\{D_i:T_i\in\mathcal T\right\}\right)
  \right\}.
\end{align*}
The same statement holds with any finite number of diagonal blocks.
\end{lemma}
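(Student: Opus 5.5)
The plan is to prove the two inequalities separately, working in the operator norm induced by the Euclidean norm. For a word $\sigma=(\sigma_1,\ldots,\sigma_k)$, write $T_\sigma:=T_{\sigma_k}\cdots T_{\sigma_1}$, and similarly $B_\sigma$ and $D_\sigma$. A direct induction on $k$ shows that $T_\sigma$ is again block upper triangular:
\begin{align*}
  T_\sigma=\begin{bmatrix}B_\sigma&E_\sigma\\0&D_\sigma\end{bmatrix},
  \qquad
  E_\sigma=\sum_{j=1}^{k}B_{\sigma_k}\cdots B_{\sigma_{j+1}}\,E_{\sigma_j}\,D_{\sigma_{j-1}}\cdots D_{\sigma_1},
\end{align*}
where empty products are identities. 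This explicit formula for the off-diagonal block drives the whole argument.

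\emph{Lower bound.} The norm of a submatrix never exceeds the norm of the full matrix, since it is a compression by coordinate injections and projections of norm at most one. Hence
\begin{align*}
  \|T_\sigma\|\ge\max\{\|B_\sigma\|,\|D_\sigma\|\}.
\end{align*}
Take the supremum over words of length $k$, then $k$-th roots, and let $k\to\infty$ in \Cref{def:jsr}. This gives $\rho(\mathcal T)\ge\max\{\rho(\{B_i\}),\rho(\{D_i\})\}$.

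\emph{Upper bound.} Let $r:=\max\{\rho(\{B_i\}),\rho(\{D_i\})\}$ and fix $\epsilon>0$. By \Cref{def:jsr} there is $m_0$ such that every product of $B$'s, and every product of $D$'s, of length $m\ge m_0$ has norm at most $(r+\epsilon)^m$. The products of length $m<m_0$ are uniformly bounded because the family is bounded. Together these give a constant $C_\epsilon\ge1$ with
\begin{align*}
  \|B_{\sigma_k}\cdots B_{\sigma_{j+1}}\|\le C_\epsilon(r+\epsilon)^{k-j},
  \qquad
  \|D_{\sigma_{j-1}}\cdots D_{\sigma_1}\|\le C_\epsilon(r+\epsilon)^{j-1}
\end{align*}
for all words. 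Let $M:=\sup_i\|E_i\|<\infty$. Each of the $k$ terms in $E_\sigma$ is then bounded by $C_\epsilon^2M(r+\epsilon)^{k-1}$, so
\begin{align*}
  \|T_\sigma\|\le\|B_\sigma\|+\|D_\sigma\|+\|E_\sigma\|\le 2C_\epsilon(r+\epsilon)^k+kC_\epsilon^2M(r+\epsilon)^{k-1}.
\end{align*}
Taking $k$-th roots, the polynomial factor $k^{1/k}$ and the constants tend to $1$, so $\rho(\mathcal T)\le r+\epsilon$. Since $\epsilon$ was arbitrary, the upper bound follows.

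The main obstacle is exactly this off-diagonal accumulation. The key point is that it contributes only a factor linear in $k$, which is harmless after taking roots, provided we use the uniform constant $C_\epsilon$ rather than any per-product estimate. The case $r+\epsilon$ small or zero causes no trouble because the bounds are all upper estimates.

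For finitely many diagonal blocks, we argue by induction. Group the first diagonal block against the remaining ones, which themselves form a block upper triangular family. Apply the two-block result, then apply the inductive hypothesis to the lower-right family.
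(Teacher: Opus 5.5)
Your proof is correct and follows the same route as the paper, whose proof only cites Proposition~1.5 of Jungers' monograph and sketches that products stay block upper triangular with diagonal blocks $B_\sigma$, $D_\sigma$, and that the off-diagonal block does not change the exponential growth rate. Your explicit formula for $E_\sigma$, the uniform constant $C_\epsilon$, and the resulting bound $\|E_\sigma\|\le k\,C_\epsilon^2M(r+\epsilon)^{k-1}$ supply the details that the sketch omits.
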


\subsection{Bellman Selector Construction}
\label{app:bellman-selector-construction}

\begin{lemma}
\label{lem:bellman_difference_selector}
For any two vectors $Q,\bar Q\in\R^{n}$, there exists a stochastic policy
$\mu_{Q,\bar Q}$ such that
\begin{equation}
  V_Q-V_{\bar Q}=\Pi^{\mu_{Q,\bar Q}}(Q-\bar Q).
  \label{eq:value-difference-selector-general}
\end{equation}
Consequently,
\begin{equation}
  F(Q)-F(\bar Q)=\gamma P\Pi^{\mu_{Q,\bar Q}}(Q-\bar Q).
  \label{eq:bellman-difference-selector-general}
\end{equation}
Moreover, for every stochastic policy $\mu$,
\begin{equation}
  P\Pi^\mu\in\co\left\{P\Pi^\pi:\pi\in\Theta\right\},
  \label{eq:policy-convex-hull}
\end{equation}
and $P\Pi^\mu$ is row-stochastic and satisfies $P\Pi^\mu\mathbf{1}=\mathbf{1}$.
\end{lemma}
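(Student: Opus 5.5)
We prove the three claims in order, working state by state; the only step needing an actual argument is the construction of $\mu_{Q,\bar Q}$.

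\emph{Step 1: the selector.} Fix $s\in\mathcal S$ and write $d:=Q-\bar Q$. Choose $a\in\operatorname*{arg\,max}_{a'}Q(s,a')$ and $b\in\operatorname*{arg\,max}_{a'}\bar Q(s,a')$. Then
\begin{align*}
V_Q(s)-V_{\bar Q}(s)&\le Q(s,a)-\bar Q(s,a)=d(s,a),\\
V_Q(s)-V_{\bar Q}(s)&\ge Q(s,b)-\bar Q(s,b)=d(s,b).
\end{align*}
So $V_Q(s)-V_{\bar Q}(s)$ lies in the interval $[d(s,b),d(s,a)]$. Hence there is $\lambda_s\in[0,1]$ with
\begin{align*}
V_Q(s)-V_{\bar Q}(s)=\lambda_s d(s,a)+(1-\lambda_s)d(s,b).
\end{align*}
If $d(s,a)=d(s,b)$, any $\lambda_s$ works. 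Define $\mu(s):=\lambda_s e_a+(1-\lambda_s)e_b\in\Delta_{|\mathcal A|}$.

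\emph{Step 2: identifying $\Pi^\mu d$.} In the action-block ordering, the $s$th row $\mu(s)^\top\otimes e_s^\top$ of $\Pi^\mu$ has entry $\mu(s)_{a'}$ at the coordinate of $(s,a')$ and zeros elsewhere. Therefore
\begin{align*}
(\Pi^\mu d)(s)=\sum_{a'}\mu(s)_{a'}\,d(s,a')=V_Q(s)-V_{\bar Q}(s),
\end{align*}
which gives \eqref{eq:value-difference-selector-general}. For the Bellman difference, $R$ cancels in $F(Q)-F(\bar Q)=\gamma P(V_Q-V_{\bar Q})$, and substituting the selector gives \eqref{eq:bellman-difference-selector-general}.

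\emph{Step 3: convex hull.} Assign to each deterministic $\pi\in\Theta$ the weight
\begin{align*}
w_\pi:=\prod_{s}\mu(s)_{\pi(s)}.
\end{align*}
These weights are nonnegative and sum to $1$. The matrix $\Pi^\mu$ depends on $\mu$ only through its rows, and row $s$ is linear in $\mu(s)$. Row $s$ of $\sum_\pi w_\pi\Pi^\pi$ equals $\bigl(\sum_\pi w_\pi e_{\pi(s)}\bigr)^\top\otimes e_s^\top$. Summing out the other states, the marginal of the weights on $\pi(s)$ is $\mu(s)$, so $\sum_\pi w_\pi e_{\pi(s)}=\mu(s)$. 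Thus $\Pi^\mu=\sum_\pi w_\pi\Pi^\pi$, and left-multiplying by $P$ gives \eqref{eq:policy-convex-hull}.

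\emph{Step 4: row-stochasticity.} Each entry of $\Pi^\mu$ is nonnegative, and each row sums to $\mathbf{1}^\top\mu(s)=1$, so $\Pi^\mu\mathbf{1}=\mathbf{1}$. Each row of $P$ is a probability distribution $P(\cdot\mid s,a)$, so $P$ is nonnegative with $P\mathbf{1}=\mathbf{1}$. Hence $P\Pi^\mu$ is nonnegative and $P\Pi^\mu\mathbf{1}=P\mathbf{1}=\mathbf{1}$.
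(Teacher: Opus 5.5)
Your proof is correct and follows the same route as the paper: you interpolate $V_Q(s)-V_{\bar Q}(s)$ state by state as a convex combination of the coordinate differences, stack the rows and multiply by $\gamma P$, use the product weights $\prod_s\mu(\pi(s)\mid s)$ for the convex-hull claim, and get row-stochasticity from that of $P$ and $\mu$. Your Step~1 is slightly more explicit than the paper, which only asserts that the difference lies between $\min_a u_a$ and $\max_a u_a$; you prove the bracket $[d(s,b),d(s,a)]$ using greedy actions and so obtain a selector supported on at most two actions.
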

\begin{proof}
Fix a state $s$ and write
\begin{align*}
  u_a:=Q(s,a)-\bar Q(s,a),
  \qquad a\in\mathcal A.
\end{align*}
The scalar difference $V_Q(s)-V_{\bar Q}(s)$ lies between
$\min_{a\in\mathcal A}u_a$ and $\max_{a\in\mathcal A}u_a$. Hence it can be
written as a convex combination of the numbers $\{u_a:a\in\mathcal A\}$. Choose
one such convex combination and denote its weights by
$\mu_{Q,\bar Q}(s)\in\Delta_{|\mathcal A|}$. Doing this independently for each
state gives
\begin{align*}
  V_Q(s)-V_{\bar Q}(s)
  =\sum_{a\in\mathcal A}\mu_{Q,\bar Q}(a\mid s)
  \{Q(s,a)-\bar Q(s,a)\},
\end{align*}
for every $s\in\mathcal S$. Stacking the state-wise identities gives
\Cref{eq:value-difference-selector-general}, and multiplying by $\gamma P$ gives
\Cref{eq:bellman-difference-selector-general}.

For the convex-hull statement, write
\begin{align*}
  \lambda_\pi:=\prod_{s\in\mathcal S}\mu(\pi(s)\mid s),
  \qquad \pi\in\Theta.
\end{align*}
Then $\lambda_\pi\ge0$ and $\sum_{\pi\in\Theta}\lambda_\pi=1$. The matrix
$\Pi^\mu$ is the corresponding convex combination of the deterministic selector
matrices $\Pi^\pi$, and therefore \Cref{eq:policy-convex-hull} follows after
multiplication by $P$. The row-stochasticity and the identity
$P\Pi^\mu\mathbf{1}=\mathbf{1}$ follow from the stochasticity of $P$ and
$\mu$.
\end{proof}

\begin{lemma}
\label{lem:bellman_difference_fixed_point_selector}
For each $k$, let $\mu_k:=\mu_{Q_k,Q^\star}$ denote one stochastic
Bellman-difference selector. Then
\begin{equation*}
  F(Q_k)-F(Q^\star)=\gamma P\Pi^{\mu_k}(Q_k-Q^\star).
\end{equation*}
\end{lemma}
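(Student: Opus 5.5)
This is a direct application of the previous lemma: substitute $Q=Q_k$ and $\bar Q=Q^\star$ into \Cref{lem:bellman_difference_selector}. The plan is simply to verify that the hypotheses are met and then read off the conclusion.

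First, the general selector lemma requires only that $Q$ and $\bar Q$ be arbitrary vectors in $\R^n$. Both $Q_k$, the current iterate of whichever recursion is being considered, and $Q^\star$ satisfy this. So for each $k$, \Cref{lem:bellman_difference_selector} supplies a stochastic policy $\mu_{Q_k,Q^\star}$. This policy is built state by state: for each $s$, the scalar $V_{Q_k}(s)-V_{Q^\star}(s)$ is written as a convex combination of the action-wise differences $Q_k(s,a)-Q^\star(s,a)$. We define $\mu_k:=\mu_{Q_k,Q^\star}$, fixing one such choice for each $k$; since the selector need not be unique, any choice works.

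Next, \Cref{eq:bellman-difference-selector-general} with $(Q,\bar Q)=(Q_k,Q^\star)$ gives
\begin{equation*}
F(Q_k)-F(Q^\star)=\gamma P\Pi^{\mu_k}(Q_k-Q^\star),
\end{equation*}
which is exactly the claim. If one wants this step self-contained, it follows from $F(Q)=R+\gamma PV_Q$: the reward terms cancel, leaving $\gamma P(V_{Q_k}-V_{Q^\star})$. Then \Cref{eq:value-difference-selector-general} replaces the value difference with $\Pi^{\mu_k}(Q_k-Q^\star)$.

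There is no real obstacle. The only subtle point is that $\mu_k$ depends on $Q_k$, and hence on the trajectory. This does not matter here, because the identity is pointwise in $k$. What the later SLS lemmas actually need is that $A^{\rm QVI}_{\mu_k}$ and $A^{\rm HB}_{\mu_k}$ lie in the convex hull of the deterministic-policy family, and that is already provided by \Cref{eq:policy-convex-hull}.
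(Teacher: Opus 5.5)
Your proposal is correct and matches the paper's proof, which simply applies \Cref{lem:bellman_difference_selector} with $Q=Q_k$ and $\bar Q=Q^\star$. Your extra unpacking through $F(Q)=R+\gamma PV_Q$ and the value-difference identity is consistent with how that lemma is proved.
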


\subsection{Standard Constant/Projection Block Form}
\label{app:standard-constant-projection-block-form}

\begin{lemma}
\label{lem:standard-constant-projected-block-form}
Let $q:=\mathbf{1}/\sqrt n$ and $S:=[q\ U]$. For every deterministic policy
$\pi\in\Theta$,
\begin{align*}
  S^\top A_\pi^{\rm QVI}S
  =
  \begin{bmatrix}
    \bigl(1-\alpha(1-\gamma)\bigr)&q^\top A_\pi^{\rm QVI}U\\
    0&\bar A_\pi^{\rm QVI}
  \end{bmatrix}.
\end{align*}
Consequently, for every word $(\pi_0,\ldots,\pi_{k-1})$,
\begin{equation*}
\begin{aligned}
&S^\top A_{\pi_{k-1}}^{\rm QVI}\cdots A_{\pi_0}^{\rm QVI}S\\
&\quad=
  \begin{bmatrix}
    \bigl(1-\alpha(1-\gamma)\bigr)^k&*\\
    0&\bar A_{\pi_{k-1}}^{\rm QVI}\cdots\bar A_{\pi_0}^{\rm QVI}
  \end{bmatrix},
\end{aligned}
\end{equation*}
where $*$ denotes an unspecified block of conforming dimension.
\end{lemma}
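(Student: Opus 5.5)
The plan is a direct computation. It relies on two facts: $S$ is an orthogonal matrix, and $\mathbf{1}$ is a common eigenvector of every standard QVI mode (\Cref{lem:standard-qvi-common-eigenvector}).

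\emph{Orthogonality of $S$.} Since $q=\mathbf{1}/\sqrt n$ has unit norm, $U^\top U=I_{n-1}$ and $U^\top q=U^\top\mathbf{1}/\sqrt n=0$, we get $S^\top S=I_n$. Because $S$ is square, this also gives $SS^\top=qq^\top+UU^\top=I_n$; equivalently, $UU^\top=\Pi_\perp=I-qq^\top$.

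\emph{Block form for a single mode.} Write
\begin{align*}
  S^\top A_\pi^{\rm QVI}S=
  \begin{bmatrix}
    q^\top A_\pi^{\rm QVI}q & q^\top A_\pi^{\rm QVI}U\\
    U^\top A_\pi^{\rm QVI}q & U^\top A_\pi^{\rm QVI}U
  \end{bmatrix}.
\end{align*}
By \Cref{lem:standard-qvi-common-eigenvector}, $A_\pi^{\rm QVI}q=\bigl(1-\alpha(1-\gamma)\bigr)q$. Hence the $(1,1)$ entry is $\bigl(1-\alpha(1-\gamma)\bigr)q^\top q=1-\alpha(1-\gamma)$. The $(2,1)$ block is $\bigl(1-\alpha(1-\gamma)\bigr)U^\top q=0$. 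The $(2,2)$ block is $\bar A_\pi^{\rm QVI}$ by definition~\Cref{eq:projected-ql-family}. The $(1,2)$ block is left as written. This gives the first display.

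\emph{Products.} Using $SS^\top=I$, insert $SS^\top$ between consecutive factors:
\begin{align*}
  S^\top A_{\pi_{k-1}}^{\rm QVI}\cdots A_{\pi_0}^{\rm QVI}S
  =(S^\top A_{\pi_{k-1}}^{\rm QVI}S)\cdots(S^\top A_{\pi_0}^{\rm QVI}S).
\end{align*}
A product of block upper triangular matrices with conforming blocks is again block upper triangular. Its diagonal blocks are the products of the corresponding diagonal blocks. The $(1,1)$ block is therefore $\bigl(1-\alpha(1-\gamma)\bigr)^k$, and the $(2,2)$ block is $\bar A_{\pi_{k-1}}^{\rm QVI}\cdots\bar A_{\pi_0}^{\rm QVI}$. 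The off-diagonal block is some mixed sum, which is recorded as $*$. An induction on $k$ makes this rigorous.

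There is no real obstacle. The only point needing care is the identity $SS^\top=I$: it is what allows the conjugated product to factor into the conjugated modes. This uses the fact that $q$ and the columns of $U$ together form a complete orthonormal basis of $\R^n$.
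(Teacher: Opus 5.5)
Your proposal is correct and follows essentially the same route as the paper: it uses the common eigenvector identity $A_\pi^{\rm QVI}q=\bigl(1-\alpha(1-\gamma)\bigr)q$ to compute the four blocks of $S^\top A_\pi^{\rm QVI}S$, then observes that products of block upper triangular matrices stay block upper triangular with diagonal blocks multiplying. You also state explicitly the step the paper leaves implicit, namely inserting $SS^\top=I$ between factors so that the conjugated product factors into the conjugated modes.
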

\begin{proof}
Because $q=\mathbf{1}/\sqrt n$ and $P\Pi^\pi\mathbf{1}=\mathbf{1}$, every
standard QVI mode satisfies
\begin{align*}
  A_\pi^{\rm QVI}q=\bigl(1-\alpha(1-\gamma)\bigr)q.
\end{align*}
Together with $q^\top q=1$, $U^\top q=0$, and
$\bar A_\pi^{\rm QVI}=U^\top A_\pi^{\rm QVI}U$, this gives
\begin{align*}
  q^\top A_\pi^{\rm QVI}q&=\bigl(1-\alpha(1-\gamma)\bigr),\\
  U^\top A_\pi^{\rm QVI}q&=\bigl(1-\alpha(1-\gamma)\bigr)U^\top q=0,\\
  U^\top A_\pi^{\rm QVI}U&=\bar A_\pi^{\rm QVI}.
\end{align*}
Therefore
\begin{align*}
  S^\top A_\pi^{\rm QVI}S
  &=
  \begin{bmatrix}q^\top\\U^\top\end{bmatrix}
  A_\pi^{\rm QVI}
  \begin{bmatrix}q&U\end{bmatrix}\\
  &=
  \begin{bmatrix}
    q^\top A_\pi^{\rm QVI}q&q^\top A_\pi^{\rm QVI}U\\
    U^\top A_\pi^{\rm QVI}q&U^\top A_\pi^{\rm QVI}U
  \end{bmatrix}\\
  &=
  \begin{bmatrix}
    \bigl(1-\alpha(1-\gamma)\bigr)&q^\top A_\pi^{\rm QVI}U\\
    0&\bar A_\pi^{\rm QVI}
  \end{bmatrix}.
\end{align*}
Products of these transformed matrices remain block upper triangular. Their
upper-left diagonal block is multiplied by the same scalar at each step, and
their lower-right diagonal block is the corresponding product of the projected
modes. This proves the product identity.
\end{proof}

\subsection{Block Column Norm Bound}
\label{app:block-column-norm-bound}

\begin{lemma}
\label{lem:block-column-norm-bound}
For any conforming matrices $X$ and $Y$,
\begin{align*}
  \left\lVert
    \begin{bmatrix}X&0\\Y&0\end{bmatrix}
  \right\rVert_2
  \le \left\lVert X\right\rVert_2+\left\lVert Y\right\rVert_2.
\end{align*}
\end{lemma}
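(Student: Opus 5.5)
The plan is to bound the spectral norm of $M:=\begin{bmatrix}X&0\\Y&0\end{bmatrix}$ directly from the variational definition $\|M\|_2=\sup_{\|w\|_2=1}\|Mw\|_2$. We split a test vector conformally as $w=(u,v)$, with $u$ matching the column dimension of $X$ (equivalently of $Y$) and $v$ matching the zero block. The zero second block column gives
\begin{align*}
  Mw=\begin{bmatrix}Xu\\Yu\end{bmatrix},
\end{align*}
so $v$ does not contribute to the output, and $\|u\|_2\le\|w\|_2=1$.

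Next we use the Euclidean identity for stacked vectors, $\left\lVert\begin{bmatrix}a\\b\end{bmatrix}\right\rVert_2=\sqrt{\|a\|_2^2+\|b\|_2^2}$, together with the elementary inequality $\sqrt{s^2+t^2}\le s+t$ for $s,t\ge0$. These give
\begin{align*}
  \|Mw\|_2=\sqrt{\|Xu\|_2^2+\|Yu\|_2^2}
  \le \|Xu\|_2+\|Yu\|_2
  \le \bigl(\|X\|_2+\|Y\|_2\bigr)\|u\|_2
  \le \|X\|_2+\|Y\|_2 .
\end{align*}
Taking the supremum over unit $w$ then yields the claim.

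An equivalent route is the triangle inequality on the split $M=\begin{bmatrix}X&0\\0&0\end{bmatrix}+\begin{bmatrix}0&0\\Y&0\end{bmatrix}$. Each summand has spectral norm equal to that of its nonzero block, because padding with zero rows and columns does not change singular values.

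There is no real obstacle here. The only point needing care is bookkeeping: the blocks must be conforming, meaning $X$ and $Y$ have the same number of columns, so that the same $u$ feeds both.
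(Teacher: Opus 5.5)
Your proof is correct and follows essentially the same route as the paper: split the test vector as $(u,v)$, observe that the zero block column removes $v$, bound $\|(Xu,Yu)\|_2\le(\|X\|_2+\|Y\|_2)\|u\|_2\le(\|X\|_2+\|Y\|_2)\|(u,v)\|_2$, and take the supremum. You also make explicit the step $\sqrt{s^2+t^2}\le s+t$, which the paper leaves implicit.
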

\begin{proof}
For any vector $\begin{bmatrix}u\\v\end{bmatrix}$,
\begin{align*}
  \left\lVert
  \begin{bmatrix}X&0\\Y&0\end{bmatrix}
  \begin{bmatrix}u\\v\end{bmatrix}
  \right\rVert_2
  &=
  \left\lVert
  \begin{bmatrix}Xu\\Yu\end{bmatrix}
  \right\rVert_2 \\
  &\le
  \left(\left\lVert X\right\rVert_2+\left\lVert Y\right\rVert_2\right)
  \left\lVert u\right\rVert_2 \\
  &\le
  \left(\left\lVert X\right\rVert_2+\left\lVert Y\right\rVert_2\right)
  \left\lVert\begin{bmatrix}u\\v\end{bmatrix}\right\rVert_2.
\end{align*}
Taking the supremum over all nonzero $\begin{bmatrix}u\\v\end{bmatrix}$ gives the claim.
\end{proof}

\subsection{Projected Product-Norm Bound}
\label{app:projected-product-norm-bound}

\begin{lemma}
\label{lem:projected-product-norm-bound}
Let $q:=\mathbf{1}/\sqrt n$ and $S:=[q\ U]$. For every deterministic word
$(\pi_0,\ldots,\pi_{k-1})$,
\begin{align*}
  \left\lVert
    \bar A_{\pi_{k-1}}^{\rm QVI}\cdots \bar A_{\pi_0}^{\rm QVI}
  \right\rVert_2
  &\le
  \left\lVert
    S^\top A_{\pi_{k-1}}^{\rm QVI}\cdots A_{\pi_0}^{\rm QVI}S
  \right\rVert_2\\
  &=
  \left\lVert
    A_{\pi_{k-1}}^{\rm QVI}\cdots A_{\pi_0}^{\rm QVI}
  \right\rVert_2.
\end{align*}
\end{lemma}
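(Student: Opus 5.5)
The statement to prove is \Cref{lem:projected-product-norm-bound}. The plan is to reduce everything to the block upper triangular form of \Cref{lem:standard-constant-projected-block-form}, together with orthogonal invariance of the spectral norm.

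\textbf{Step 1: $S$ is orthogonal.} Since $q^\top q=1$, $U^\top q=0$ and $U^\top U=I_{n-1}$, we have
\begin{align*}
  S^\top S=\begin{bmatrix}q^\top q&q^\top U\\U^\top q&U^\top U\end{bmatrix}=I_n .
\end{align*}
Because $S$ is square, this gives $SS^\top=I_n$ as well.

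\textbf{Step 2: the equality.} Write $M:=A_{\pi_{k-1}}^{\rm QVI}\cdots A_{\pi_0}^{\rm QVI}$. For every vector $x$ we have $\|Sx\|_2=\|x\|_2$ and $\|S^\top x\|_2=\|x\|_2$. Hence
\begin{align*}
  \|S^\top M S\|_2=\sup_{x\neq0}\frac{\|S^\top M Sx\|_2}{\|x\|_2}=\sup_{y\neq0}\frac{\|My\|_2}{\|y\|_2}=\|M\|_2 ,
\end{align*}
using the substitution $y=Sx$, which is a bijection of $\R^n$.

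\textbf{Step 3: the inequality.} By \Cref{lem:standard-constant-projected-block-form}, $S^\top MS$ is block upper triangular. Its lower-right block is exactly $\bar A_{\pi_{k-1}}^{\rm QVI}\cdots\bar A_{\pi_0}^{\rm QVI}$.

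The spectral norm of any submatrix is bounded by that of the full matrix. Concretely, let $J:=\begin{bmatrix}0\\ I_{n-1}\end{bmatrix}$. Then $J$ and $J^\top$ both have spectral norm $1$, and the lower-right block equals $J^\top(S^\top MS)J$. Submultiplicativity then gives the inequality.

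\textbf{Main obstacle.} The only point needing care is confirming that the lower-right block of the product is the product of the projected modes. This is already supplied by \Cref{lem:standard-constant-projected-block-form}, because the zero lower-left blocks make products of block upper triangular matrices multiply blockwise along the diagonal. With that lemma in hand, the rest is routine.
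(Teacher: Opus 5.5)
Your proof is correct and follows essentially the same route as the paper: the equality comes from orthogonal invariance of the spectral norm, and the inequality from the block upper triangular form in \Cref{lem:standard-constant-projected-block-form}. Your compression $J^\top(S^\top MS)J$ with $\|J\|_2=\|J^\top\|_2=1$ is a matrix-form restatement of the paper's argument, which applies $S^\top MS$ to test vectors $\begin{bmatrix}0\\ y\end{bmatrix}$ and reads off the lower block.
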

\begin{proof}
The block form gives
\begin{equation*}
\begin{aligned}
&S^\top A_{\pi_{k-1}}^{\rm QVI}\cdots A_{\pi_0}^{\rm QVI}S =
  \begin{bmatrix}
    \bigl(1-\alpha(1-\gamma)\bigr)^k&*\\
    0&\bar A_{\pi_{k-1}}^{\rm QVI}\cdots \bar A_{\pi_0}^{\rm QVI}
  \end{bmatrix},
\end{aligned}
\end{equation*}
where $*$ denotes an unspecified block of conforming dimension whose value is not
used below. Hence, for every $y\in\R^{n-1}$ with $y\ne0$,
\begin{align*}
  &S^\top A_{\pi_{k-1}}^{\rm QVI}\cdots A_{\pi_0}^{\rm QVI}S
  \begin{bmatrix}0\\y\end{bmatrix}
  =
  \begin{bmatrix}
    *\\
    \bar A_{\pi_{k-1}}^{\rm QVI}\cdots \bar A_{\pi_0}^{\rm QVI}y
  \end{bmatrix},
\end{align*}
and therefore
\begin{align*}
  \frac{
  \left\lVert
    \bar A_{\pi_{k-1}}^{\rm QVI}\cdots \bar A_{\pi_0}^{\rm QVI}y
  \right\rVert_2}{\left\lVert y\right\rVert_2}
  &\le
  \frac{
  \left\lVert
    S^\top A_{\pi_{k-1}}^{\rm QVI}\cdots A_{\pi_0}^{\rm QVI}S
    \begin{bmatrix}0\\y\end{bmatrix}
  \right\rVert_2}{\left\lVert \begin{bmatrix}0\\y\end{bmatrix}\right\rVert_2}\\
  &\le
  \left\lVert
    S^\top A_{\pi_{k-1}}^{\rm QVI}\cdots A_{\pi_0}^{\rm QVI}S
  \right\rVert_2.
\end{align*}
Taking the supremum over all nonzero $y$ gives
\begin{align*}
&\left\lVert
    \bar A_{\pi_{k-1}}^{\rm QVI}\cdots \bar A_{\pi_0}^{\rm QVI}
  \right\rVert_2\\
&\quad=
  \sup_{y\in\R^{n-1},\,y\ne0}
  \frac{\left\lVert
    \bar A_{\pi_{k-1}}^{\rm QVI}\cdots \bar A_{\pi_0}^{\rm QVI}y
  \right\rVert_2}{\left\lVert y\right\rVert_2}\\
&\quad\le
  \left\lVert
    S^\top A_{\pi_{k-1}}^{\rm QVI}\cdots A_{\pi_0}^{\rm QVI}S
  \right\rVert_2.
\end{align*}
Since $S$ is orthogonal, $S^\top S=SS^\top=I$ and orthogonal multiplication
preserves the Euclidean norm. Hence $\|S^\top v\|_2=\|v\|_2$ and
$\|Sx\|_2=\|x\|_2$, so
\begin{align*}
&\left\lVert
  S^\top A_{\pi_{k-1}}^{\rm QVI}\cdots A_{\pi_0}^{\rm QVI}S
  \right\rVert_2\\
&\quad=\sup_{x\ne0}
  \frac{\left\lVert
  S^\top A_{\pi_{k-1}}^{\rm QVI}\cdots A_{\pi_0}^{\rm QVI}Sx
  \right\rVert_2}{\left\lVert x\right\rVert_2}\\
&\quad=\sup_{x\ne0}
  \frac{\left\lVert
  A_{\pi_{k-1}}^{\rm QVI}\cdots A_{\pi_0}^{\rm QVI}Sx
  \right\rVert_2}{\left\lVert Sx\right\rVert_2}\\
&\quad=\sup_{y\ne0}
  \frac{\left\lVert
  A_{\pi_{k-1}}^{\rm QVI}\cdots A_{\pi_0}^{\rm QVI}y
  \right\rVert_2}{\left\lVert y\right\rVert_2}\\
&\quad=\left\lVert A_{\pi_{k-1}}^{\rm QVI}\cdots A_{\pi_0}^{\rm QVI}\right\rVert_2.
\end{align*}
\end{proof}

\subsection{JSR Continuity for the Projected Heavy-Ball Family}
\label{app:projected-hb-jsr-continuity}

The next auxiliary fact is the continuity property used in the global JSR
improvement argument.
\begin{lemma}
\label{lem:projected-hb-jsr-continuity}
The map
\begin{align*}
  \eta\mapsto \rho(\bar{\mathcal A}^{\rm HB})
\end{align*}
is continuous for the projected heavy-ball family in
\Cref{eq:projected-hb-family}.
\end{lemma}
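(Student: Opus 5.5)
The plan is to obtain continuity as the combination of upper and lower semicontinuity. Each of these comes from a different variational characterization of the JSR of a finite family whose members depend continuously on $\eta$.

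First I would fix the setting. The index set $\Theta$ is finite and does not depend on $\eta$. By \Cref{eq:projected-hb-family}, each mode $\bar A_\pi^{\rm HB}(\eta)$ is an affine, hence continuous, function of $\eta$. Since $\bar A_\pi^{\rm QVI}$ does not involve $\eta$, only the blocks $\eta I_{n-1}$ and $-\eta I_{n-1}$ vary. Consequently, for every fixed length $k$ and every fixed word $(\pi_1,\ldots,\pi_k)\in\Theta^k$, the product $\bar A_{\pi_k}^{\rm HB}(\eta)\cdots\bar A_{\pi_1}^{\rm HB}(\eta)$ is a polynomial matrix function of $\eta$. It is therefore continuous.

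For upper semicontinuity I would use the fact that, by submultiplicativity and Fekete's lemma, the limit in \Cref{def:jsr} is actually an infimum:
\begin{align*}
\rho(\bar{\mathcal A}^{\rm HB})=\inf_{k\ge1}\max_{\pi_1,\ldots,\pi_k\in\Theta}\bigl\|\bar A_{\pi_k}^{\rm HB}\cdots\bar A_{\pi_1}^{\rm HB}\bigr\|_2^{1/k}.
\end{align*}
For each fixed $k$, the function $\eta\mapsto\max_{\pi_1,\ldots,\pi_k}\|\cdots\|_2^{1/k}$ is a maximum of finitely many continuous functions, so it is continuous. An infimum of continuous functions is upper semicontinuous.

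For lower semicontinuity I would invoke the Berger--Wang theorem (see \citealp{jungers2009joint}) for bounded families:
\begin{align*}
\rho(\bar{\mathcal A}^{\rm HB})=\limsup_{k\to\infty}\max_{\pi_1,\ldots,\pi_k}\rho\bigl(\bar A_{\pi_k}^{\rm HB}\cdots\bar A_{\pi_1}^{\rm HB}\bigr)^{1/k}.
\end{align*}
Combined with the elementary bound $\rho(M)^{1/k}\le\rho(\mathcal H)$ for any product $M$ of length $k$, this limsup equals
\begin{align*}
\sup_{k\ge1}\max_{\pi_1,\ldots,\pi_k}\rho\bigl(\bar A_{\pi_k}^{\rm HB}\cdots\bar A_{\pi_1}^{\rm HB}\bigr)^{1/k}.
\end{align*}
The ordinary spectral radius is a continuous function of the matrix entries, because eigenvalues depend continuously on the coefficients of the characteristic polynomial. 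Hence each term is continuous in $\eta$. A supremum of continuous functions is lower semicontinuous.

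Combining the two directions gives continuity at every $\eta\in\R$, and in particular on $\eta\ge0$.

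The main obstacle is the lower semicontinuity step, since it relies on the nontrivial Berger--Wang identity rather than on elementary norm estimates. If a self-contained route is preferred, I would instead cite the known continuity of the JSR with respect to the Hausdorff distance on compact families of matrices (Heil--Strang, reproduced in \citealp{jungers2009joint}). The Hausdorff distance between the families at $\eta$ and $\eta'$ is at most $\max_\pi\|\bar A_\pi^{\rm HB}(\eta)-\bar A_\pi^{\rm HB}(\eta')\|_2\le\sqrt2\,|\eta-\eta'|$, so continuity in $\eta$ follows immediately from that result.
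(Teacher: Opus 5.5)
Your proposal is correct and takes essentially the paper's route. The paper's proof only notes that each $\bar A_\pi^{\rm HB}$ depends affinely on $\eta$ over the fixed finite index set $\Theta$, then cites continuity of the JSR of a finite family in the matrix entries \citep[Chapter~1]{jungers2009joint}, which is your Heil--Strang fallback; your semicontinuity argument (Fekete infimum of product norms for the upper bound, Berger--Wang supremum of product spectral radii for the lower bound) is the standard proof of that cited fact, so it makes the step self-contained modulo Berger--Wang rather than changing the route.
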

\begin{proof}
The family in~\Cref{eq:projected-hb-family} is finite for each fixed $\eta$, and
each of its matrices depends continuously, in fact affinely, on $\eta$. The JSR
of a finite matrix family is continuous with respect to the entries of the
matrices in the family; see, for example,~\citep[Chapter~1]{jungers2009joint}.
Applying this standard continuity property to the projected heavy-ball family
gives the claim.
\end{proof}

\subsection{Common Eigenvector Invariance}
\label{app:common-eigenvector-invariance}

The appendix verifies the invariant augmented subspace associated with the common eigenvector used in the
main text.
\begin{lemma}
\label{lem:hb-common-eigenvector-invariance}
The two-dimensional augmented subspace $\mathcal S_{\mathbf{1}}^{\rm HB}$ defined in
\Cref{eq:hb-common-eigenvector-subspace} is invariant for every $A_\mu^{\rm HB}$
and therefore for every $A_\pi^{\rm HB}\in\mathcal A^{\rm HB}$. Equivalently, for every stochastic policy
$\mu$,
\begin{equation*}
  A_\mu^{\rm HB}\mathcal S_{\mathbf{1}}^{\rm HB}
  \subseteq \mathcal S_{\mathbf{1}}^{\rm HB}.
\end{equation*}
That is, applying any heavy-ball mode to a vector whose two blocks are constant
vectors again gives a vector whose two blocks are constant vectors.
\end{lemma}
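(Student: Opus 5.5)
The plan is a direct block computation on an arbitrary element of $\mathcal S_{\mathbf{1}}^{\rm HB}$, using only the row-stochastic identity $P\Pi^\mu\mathbf{1}=\mathbf{1}$ from \Cref{eq:policy-selector-row-stochastic} (equivalently, the last assertion of \Cref{lem:bellman_difference_selector}). We fix an arbitrary stochastic policy $\mu$ and an arbitrary vector $v=\begin{bmatrix}x\mathbf{1}\\y\mathbf{1}\end{bmatrix}$ with $x,y\in\R$. We then apply the block form of $A_\mu^{\rm HB}$ from \Cref{lem:hb-exact-sls} one block row at a time.

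For the first block row, we expand
\begin{align*}
\bigl((1-\alpha+\eta)I+\alpha\gamma P\Pi^\mu\bigr)x\mathbf{1}-\eta y\mathbf{1}
&=(1-\alpha+\eta)x\mathbf{1}+\alpha\gamma x\,P\Pi^\mu\mathbf{1}-\eta y\mathbf{1}.
\end{align*}
Substituting $P\Pi^\mu\mathbf{1}=\mathbf{1}$, this becomes $\bigl\{\bigl(1-\alpha(1-\gamma)\bigr)+\eta\bigr\}x\mathbf{1}-\eta y\mathbf{1}$. This is a scalar multiple of $\mathbf{1}$. The second block row is simply $I\cdot x\mathbf{1}+0\cdot y\mathbf{1}=x\mathbf{1}$. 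Hence
\begin{align*}
A_\mu^{\rm HB}v=\begin{bmatrix}x'\mathbf{1}\\y'\mathbf{1}\end{bmatrix},
\qquad
\begin{bmatrix}x'\\y'\end{bmatrix}=C_\eta^-\begin{bmatrix}x\\y\end{bmatrix},
\end{align*}
so $A_\mu^{\rm HB}v\in\mathcal S_{\mathbf{1}}^{\rm HB}$. Since $v$ was arbitrary, this gives the claimed inclusion $A_\mu^{\rm HB}\mathcal S_{\mathbf{1}}^{\rm HB}\subseteq\mathcal S_{\mathbf{1}}^{\rm HB}$. As a byproduct, it also recovers the Kronecker identity with $C_\eta^-$ displayed before \Cref{lem:hb-common-eigenvector-recursion}.

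The statement for $\mathcal A^{\rm HB}$ follows because each deterministic policy $\pi\in\Theta$ is a special stochastic policy (its one-hot encoding), so $A_\pi^{\rm HB}$ is an instance of $A_\mu^{\rm HB}$. The same argument would also extend to any element of $\co(\mathcal A^{\rm HB})$, since a linear subspace is preserved under convex combinations of maps that each leave it invariant. There is no real obstacle here. The only step that needs care is correctly invoking the row-stochasticity of $P\Pi^\mu$ for stochastic, not just deterministic, $\mu$, and that fact is already supplied by \Cref{lem:bellman_difference_selector}.
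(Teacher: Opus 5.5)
Your proposal is correct and matches the paper's proof. Both are the same direct block computation on $\begin{bmatrix}x\mathbf{1}\\y\mathbf{1}\end{bmatrix}$, using $P\Pi^\mu\mathbf{1}=\mathbf{1}$ to collapse the first block row to $\{(1-\alpha(1-\gamma))+\eta\}x\mathbf{1}-\eta y\mathbf{1}$, while the second block row is $x\mathbf{1}$.
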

\begin{proof}
For all $x,y\in\R$ and every stochastic policy $\mu$,
\begin{align*}
  A_\mu^{\rm HB}
  \begin{bmatrix}x\mathbf{1}\\y\mathbf{1}\end{bmatrix}
  &=
  \begin{bmatrix}
    \{(1-\alpha+\eta)I+\alpha\gamma P\Pi^\mu\}x\mathbf{1}-\eta y\mathbf{1}\\
    x\mathbf{1}
  \end{bmatrix}\\
  &=
  \begin{bmatrix}
    (1-\alpha+\eta)x\mathbf{1}+\alpha\gamma xP\Pi^\mu\mathbf{1}-\eta y\mathbf{1}\\
    x\mathbf{1}
  \end{bmatrix}\\
  &=
  \begin{bmatrix}
    \{\bigl(1-\alpha(1-\gamma)\bigr)+\eta\}x\mathbf{1}-\eta y\mathbf{1}\\
    x\mathbf{1}
  \end{bmatrix}
  \in\mathcal S_{\mathbf{1}}^{\rm HB},
\end{align*}
where the third equality uses $P\Pi^\mu\mathbf{1}=\mathbf{1}$. This proves
\Cref{eq:hb-common-eigenvector-invariance}.
\end{proof}

\subsection{Heavy-Ball JSR Upper Bound}
\label{app:hb-jsr-upper-bound}

\begin{proposition}
\label{prop:hb-jsr-upper-bound}
For $\eta\ge0$, we have
\begin{align*}
\rho(\mathcal A^{\rm HB})
&\le \rho(C_\eta^+)\\
&=\frac{1-\alpha(1-\gamma)+\eta}{2} + \frac{\sqrt{\bigl(1-\alpha(1-\gamma)+\eta\bigr)^2+4\eta}}{2},
\end{align*}
where
\begin{align*}
  C_\eta^+&:=
  \begin{bmatrix}
    \bigl(1-\alpha(1-\gamma)\bigr)+\eta&\eta\\
    1&0
  \end{bmatrix}.
\end{align*}
Consequently, the sufficient ambient stability condition
\begin{equation}
  0\le \eta<\frac{\alpha(1-\gamma)}{2}
  \label{eq:hb-simple-stability-sufficient}
\end{equation}
implies $\rho(\mathcal A^{\rm HB})<1$.
\end{proposition}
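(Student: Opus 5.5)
The plan is to dominate every heavy-ball mode, in a suitable block norm, by the nonnegative $2\times 2$ matrix $C_\eta^+$. Write $c:=1-\alpha(1-\gamma)+\eta$ and, for a stochastic policy $\mu$, let $M_\mu:=(1-\alpha+\eta)I+\alpha\gamma P\Pi^\mu$ be the upper-left block of $A_\mu^{\rm HB}$. By \Cref{ass:qvi-alpha-stability} we have $1-\alpha\ge0$, and $\eta\ge0$, so $M_\mu$ is entrywise nonnegative. Since $P\Pi^\mu$ is row-stochastic (\Cref{lem:bellman_difference_selector}), every row of $M_\mu$ sums to $c$. Hence $\|M_\mu\|_\infty=c$.

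For a block vector $x=(u,v)\in\R^{2n}$, I would track the norm pair $w(x):=(\|u\|_\infty,\|v\|_\infty)^\top\in\R^2_{\ge0}$. Applying $A_\mu^{\rm HB}$ gives $(M_\mu u-\eta v,\ u)$. The triangle inequality then yields the entrywise inequality
\begin{align*}
  w\bigl(A_\mu^{\rm HB}x\bigr)\le
  \begin{bmatrix} c&\eta\\ 1&0\end{bmatrix} w(x)=C_\eta^+ w(x).
\end{align*}
This holds for every mode, including stochastic $\mu$, so it covers $\co(\mathcal A^{\rm HB})$ as well. Because $C_\eta^+$ is entrywise nonnegative, it preserves entrywise inequalities between nonnegative vectors. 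Induction along any word $(\sigma_1,\ldots,\sigma_k)$ therefore gives $w(A_{\sigma_k}\cdots A_{\sigma_1}x)\le (C_\eta^+)^k w(x)$.

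Now use the norm $\|x\|:=\max\{\|u\|_\infty,\|v\|_\infty\}=\|w(x)\|_\infty$. The previous bound gives $\|A_{\sigma_k}\cdots A_{\sigma_1}\|\le\|(C_\eta^+)^k\|_\infty$ uniformly over words. Taking $k$-th roots and letting $k\to\infty$, Gelfand's formula and the norm-independence of the JSR (\Cref{def:jsr}) give $\rho(\mathcal A^{\rm HB})\le\rho(C_\eta^+)$. The characteristic polynomial of $C_\eta^+$ is $p(\lambda)=\lambda^2-c\lambda-\eta$. Its roots are real, and since $c\ge0$ the larger root dominates in modulus, which gives the displayed closed form $\bigl(c+\sqrt{c^2+4\eta}\bigr)/2$.

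For the stability consequence, note that $p$ is an upward parabola with $p(0)=-\eta\le0$. Hence its largest root lies below $1$ if and only if $p(1)>0$. We compute $p(1)=1-c-\eta=\alpha(1-\gamma)-2\eta$, which is positive exactly under \Cref{eq:hb-simple-stability-sufficient}. The negative root has modulus at most the positive one, so then $\rho(C_\eta^+)<1$ and hence $\rho(\mathcal A^{\rm HB})<1$.

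The only delicate point is the nonnegativity of $M_\mu$, which requires $1-\alpha+\eta\ge0$. This is where $\alpha\le1$ is used; without it the identity $\|M_\mu\|_\infty=c$ would fail. Everything else is routine monotonicity of nonnegative matrices.
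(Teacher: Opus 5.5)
Your proposal is correct and follows essentially the same route as the paper: you dominate the pair of block $\infty$-norms entrywise by $C_\eta^+$, propagate this by induction using nonnegativity of $C_\eta^+$, and pass to $\rho(C_\eta^+)$ via Gelfand's formula. The differences are cosmetic; you avoid the paper's harmless factor $2$ by using $\|w(x)\|_\infty=\|x\|_\infty$, and your sign-of-$p(1)$ argument for $\rho(C_\eta^+)<1$ replaces the paper's squaring step and does not need to check that $1+\alpha(1-\gamma)-\eta>0$.
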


\subsection{Jury Stability Criterion}
\label{app:jury-stability-criterion}

The last auxiliary fact is the scalar quadratic stability test used for the
common-eigenvector companion polynomial.
\begin{lemma}
\label{lem:jury-quadratic}
For the real quadratic
\begin{align*}
  p(z)=z^2-az+b,
\end{align*}
all roots of $p$ belong to $\left\{z\in\mathbb C:|z|<1\right\}$ if and only if
\begin{equation*}
  1-a+b>0,
  \qquad
  1+a+b>0,
  \qquad
  1-b>0.
\end{equation*}
This is the second-order Jury stability criterion of~\citet{jury1962simplified}.
\end{lemma}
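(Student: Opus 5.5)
The plan is to argue directly through Vieta's relations, with no appeal to general stability theory. If $z_1,z_2$ are the roots of $p(z)=z^2-az+b$, then $z_1+z_2=a$ and $z_1z_2=b$. Also
\begin{equation*}
p(1)=1-a+b=(1-z_1)(1-z_2),\qquad p(-1)=1+a+b=(1+z_1)(1+z_2).
\end{equation*}
So the three conditions are $p(1)>0$, $p(-1)>0$ and $b<1$. I would split according to the sign of the discriminant $a^2-4b$.

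\emph{Complex-conjugate roots} ($a^2<4b$). Here $z_2=\bar z_1$, so $|z_1|^2=|z_2|^2=z_1\bar z_1=b$. Both roots lie in the open unit disk if and only if $b<1$. It remains to check that $1\mp a+b>0$ holds automatically in this case. Since $a^2<4b$ forces $b>0$, we get $|a|<2\sqrt b\le 1+b$ by AM--GM. This gives both directions of the equivalence in this case.

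\emph{Real roots} ($a^2\ge4b$). For necessity, suppose $z_1,z_2\in(-1,1)$. Each factor $1-z_i$ and $1+z_i$ is then positive, so $p(1)>0$ and $p(-1)>0$. Moreover $|b|=|z_1||z_2|<1$, so $1-b>0$.

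For sufficiency, suppose the three conditions hold. Since $p(1)=(1-z_1)(1-z_2)>0$, neither root equals $1$, and the two roots lie on the same side of $1$. Likewise $p(-1)>0$ puts both roots on the same side of $-1$. Hence either both roots lie in $(-1,1)$, or both exceed $1$, or both are below $-1$. In the last two cases $b=z_1z_2>1$, which contradicts $1-b>0$. Therefore both roots lie in $(-1,1)$.

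The main obstacle I anticipate is organising this last trichotomy cleanly, together with the boundary bookkeeping. A double root at $\pm1$ is excluded by the strict inequalities. The degenerate case $a^2=4b$ is harmless because it is absorbed into the real case. Everything else is routine, and the stated criterion is exactly the second-order Jury test of~\citet{jury1962simplified}.
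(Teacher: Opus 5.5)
Your proof is correct and complete, and it takes a different route from the paper, which gives no proof at all. The paper states the lemma as the second-order case of Jury's criterion and defers entirely to \citet{jury1962simplified}, whose tabular test applies to real polynomials of any degree.

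You instead give a self-contained verification specialised to degree two. It rests on the factorisations $p(\pm1)=(1\mp z_1)(1\mp z_2)$, the relation $z_1z_2=b$, and a case split on the sign of $a^2-4b$. In the complex-conjugate case, $|z_1|^2=b$ reduces stability to $b<1$, and the AM--GM bound $|a|<2\sqrt b\le 1+b$ shows that both $p(\pm1)>0$ conditions hold automatically, so the equivalence holds in both directions. In the real case, the signs of $p(1)$ and $p(-1)$ place both roots together in $(-1,1)$, $(1,\infty)$, or $(-\infty,-1)$. The condition $b<1$ rules out the two outer intervals, since there $z_1z_2>1$. The double root at $\pm1$ and the boundary $a^2=4b$ are handled correctly, as you note.

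Your route makes the paper's only use of the lemma fully verifiable in a few lines. That use is in the proof of \Cref{prop:common_eigenvector_acceleration}, with $a=1+\eta/\rho(\mathcal A^{\rm QVI})$ and $b=\eta/\rho(\mathcal A^{\rm QVI})^2$. The citation buys generality to higher-degree polynomials, which the paper never needs.
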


The proof is omitted; see~\citet{jury1962simplified}.

\section{Proofs of Main Results}
\label{app:proofs-main-results}

\subsection{Proof of Lemma~\ref{lem:convex-hull-jsr}}
\label{app:proof-lem-convex-hull-jsr}
\begin{proof}
This is the standard convex-hull invariance property of the JSR for finite matrix
families. Since every product whose factors are drawn from $\mathcal H$ is also a
product whose factors are drawn from $\co(\mathcal H)$, one has
$\rho(\mathcal H)\leq \rho(\co(\mathcal H))$. Conversely, for each fixed product
length, multilinearity of matrix products and convexity of norms imply that the
largest norm among products with factors in $\co(\mathcal H)$ is attained no
larger than the largest norm among products with factors in $\mathcal H$. Taking
$k$th roots and then the limit gives the reverse inequality. This is the usual
convex-hull invariance argument for the JSR~\citep[Chapter~1]{jungers2009joint}.
\end{proof}

\subsection{Proof of Lemma~\ref{lem:block-triangular-jsr}}
\label{app:proof-lem-block-triangular-jsr}

\begin{proof}
This is the block-triangular decomposition property of the JSR; see \citep[Proposition~1.5]{jungers2009joint}. Products of matrices in
$\mathcal T$ remain block upper triangular, and their diagonal blocks are exactly
the corresponding products of the $B_i$ blocks and the $D_i$ blocks. The
extra-diagonal blocks do not change the exponential growth rate that defines the
JSR, so the JSR is the maximum of the JSRs of the diagonal block families.
\end{proof}

\subsection{Proof of Lemma~\ref{lem:common_lyapunov_construction}}
\label{app:proof-lem-common-lyapunov-construction}

\begin{proof}
The construction and proof are given by~\citet{lee2026lyapunovcertified,hushenzhang2010generating}; we
omit the proof here.
\end{proof}

\subsection{Proof of Lemma~\ref{lem:standard_jsr_convergence}}
\label{app:proof-lem-standard-jsr-convergence}

\begin{proof}
By~\Cref{lem:common_lyapunov_construction}, $p_\epsilon(A_i x)\leq\beta_\epsilon p_\epsilon(x)$ for every $A_i\in\mathcal H$. If
$A_k=\sum_i\lambda_{k,i}A_i$ is a convex combination, then the triangle inequality
and homogeneity of the norm give $p_\epsilon(A_kx)
\leq
\sum_i\lambda_{k,i}p_\epsilon(A_i x)
\leq
\beta_\epsilon p_\epsilon(x)$. Applying this with $x=x_k$ gives the one-step norm contraction, and squaring it
gives the stated decrease of $V_\epsilon^\infty$. Iteration gives the
$p_\epsilon$ estimate, and the Euclidean bound follows from
$\|x\|_2\leq p_\epsilon(x)\leq\sqrt{C_\epsilon}\|x\|_2$.
\end{proof}

\subsection{Proof of Lemma~\ref{lem:bellman_difference_selector}}
\label{app:proof-lem-bellman-difference-selector}

\begin{proof}
The state-wise convex-interpolation argument is given in \Cref{lem:bellman_difference_selector} in Appendix.
\end{proof}

\subsection{Proof of Lemma~\ref{lem:bellman_difference_fixed_point_selector}}
\label{app:proof-lem-bellman-difference-fixed-point-selector}

\begin{proof}
Apply~\Cref{lem:bellman_difference_selector} with $Q=Q_k$ and
$\bar Q=Q^\star$.
\end{proof}

\subsection{Proof of Lemma~\ref{lem:standard-q-error-system}}
\label{app:proof-lem-standard-q-error-system}

\begin{proof}
Subtracting $Q^\star=F(Q^\star)$ from~\Cref{eq:standard-q-update} gives
\begin{align*}
  Q_{k+1}-Q^\star
  &=(1-\alpha)(Q_k-Q^\star)
    +\alpha\{F(Q_k)-F(Q^\star)\}.
\end{align*}
By~\Cref{lem:bellman_difference_fixed_point_selector}, we have $F(Q_k)-F(Q^\star) =\gamma P\Pi^{\mu_k}(Q_k-Q^\star)$. Therefore, we obtain the SLS representation
$Q_{k+1}-Q^\star =\{(1-\alpha)I+\alpha\gamma P\Pi^{\mu_k}\}(Q_k-Q^\star)=A_{\mu_k}^{\rm QVI}(Q_k-Q^\star)$, which completes the proof.
\end{proof}

\subsection{Proof of Proposition~\ref{prop:standard_q_jsr}}
\label{app:proof-prop-standard-q-jsr}

\begin{proof}
For every deterministic policy $\pi$, the matrix $P\Pi^\pi$ is row-stochastic.
Thus $A_\pi^{\rm QVI}$ is nonnegative under $0<\alpha\le1$, and every row of
$A_\pi^{\rm QVI}$ sums to
\begin{align*}
  (1-\alpha)+\alpha\gamma=1-\alpha(1-\gamma).
\end{align*}
Therefore
\begin{align*}
  \left\lVert A_\pi^{\rm QVI}\right\rVert_\infty
  =1-\alpha(1-\gamma),
  \qquad \pi\in\Theta.
\end{align*}
Moreover, for any word $(\pi_0,\ldots,\pi_{k-1})$, the product
$A_{\pi_{k-1}}^{\rm QVI}\cdots A_{\pi_0}^{\rm QVI}$ is again nonnegative and its
rows sum to $\bigl(1-\alpha(1-\gamma)\bigr)^k$. Hence
\begin{align*}
  \left\lVert A_{\pi_{k-1}}^{\rm QVI}\cdots A_{\pi_0}^{\rm QVI}\right\rVert_\infty
  =\bigl(1-\alpha(1-\gamma)\bigr)^k.
\end{align*}
Taking the maximum over all words of length $k$, taking $k$th roots, and passing
to the JSR limit gives
\begin{align*}
  \rho(\mathcal A^{\rm QVI})=1-\alpha(1-\gamma).
\end{align*}
The convex-hull equality follows from~\Cref{lem:convex-hull-jsr}.
\end{proof}

\subsection{Proof of Lemma~\ref{lem:standard-qvi-common-eigenvector}}
\label{app:proof-lem-standard-qvi-common-eigenvector}

\begin{proof}
Since $P\Pi^\mu\mathbf{1}=\mathbf{1}$ for every stochastic policy $\mu$, it follows that
\begin{align*}
  A_\mu^{\rm QVI}\mathbf{1}
  &=\bigl((1-\alpha)I+\alpha\gamma P\Pi^\mu\bigr)\mathbf{1}\\
  &=\bigl(1-\alpha+\alpha\gamma\bigr)\mathbf{1}
  =\bigl(1-\alpha(1-\gamma)\bigr)\mathbf{1}.
\end{align*}
Therefore, the common eigenvector direction is invariant for every standard QVI mode. If
$Q_k-Q^\star=a_k\mathbf{1}$, then the standard error recursion in
\Cref{lem:standard-q-error-system} gives
\begin{align*}
  Q_{k+1}-Q^\star
  &=A_{\mu_k}^{\rm QVI}(Q_k-Q^\star)\\
  &=a_k A_{\mu_k}^{\rm QVI}\mathbf{1}
  =\bigl(1-\alpha(1-\gamma)\bigr)a_k\mathbf{1},
\end{align*}
which is the stated scalar update.
\end{proof}

\subsection{Proof of Lemma~\ref{lem:projected-error-systems}}
\label{app:proof-lem-projected-error-systems}

\begin{proof}
Since $A_{\mu_k}^{\rm QVI}\mathbf{1}=(1-\alpha(1-\gamma))\mathbf{1}$ and
$U^\top\mathbf{1}=0$,
\begin{align*}
  U^\top A_{\mu_k}^{\rm QVI}(I-UU^\top)=0.
\end{align*}
Subtracting the fixed point $Q^\star=F(Q^\star)$ and projecting the affine
Bellman selector recursion therefore gives
\begin{align*}
  U^\top(Q_{k+1}-Q^\star)
  &=U^\top A_{\mu_k}^{\rm QVI}(Q_k-Q^\star)\\
  &=U^\top A_{\mu_k}^{\rm QVI}UU^\top(Q_k-Q^\star)\\
  &=\bar A_{\mu_k}^{\rm QVI}z_k.
\end{align*}
\end{proof}

\subsection{Proof of Lemma~\ref{lem:qvi-common-eigenvector-projected-jsr-decomposition}}
\label{app:proof-lem-qvi-common-eigenvector-projected-jsr-decomposition}

\begin{proof}
Let $q:=\mathbf{1}/\sqrt n$ and $S:=[q\ U]$. By
\Cref{lem:standard-constant-projected-block-form}, for every deterministic
policy $\pi$,
\begin{align*}
  S^\top A_\pi^{\rm QVI}S
  =
  \begin{bmatrix}
    1-\alpha(1-\gamma)&q^\top A_\pi^{\rm QVI}U\\
    0&\bar A_\pi^{\rm QVI}
  \end{bmatrix}.
\end{align*}
Products of these transformed matrices remain block upper triangular. For every
word $(\pi_0,\ldots,\pi_{k-1})$, the two diagonal blocks are
\begin{align*}
  \bigl(1-\alpha(1-\gamma)\bigr)^k
  \quad\text{and}\quad
  \bar A_{\pi_{k-1}}^{\rm QVI}\cdots\bar A_{\pi_0}^{\rm QVI}.
\end{align*}
The block triangular JSR decomposition in~\Cref{lem:block-triangular-jsr}
therefore gives the maximum of the scalar-block JSR and the projected-block JSR,
which proves the claim.
\end{proof}

\subsection{Proof of Lemma~\ref{lem:projected-ql-weak-bound}}
\label{app:proof-lem-projected-ql-weak-bound}

\begin{proof}
By~\Cref{lem:qvi-common-eigenvector-projected-jsr-decomposition},
\begin{align*}
  \rho(\mathcal A^{\rm QVI})
  =\max\left\{1-\alpha(1-\gamma),\rho(\bar{\mathcal A}^{\rm QVI})\right\}.
\end{align*}
Hence $\rho(\bar{\mathcal A}^{\rm QVI})\le\rho(\mathcal A^{\rm QVI})$.
The equality
$\rho(\mathcal A^{\rm QVI})=1-\alpha(1-\gamma)$ follows from
\Cref{prop:standard_q_jsr}.
\end{proof}

\subsection{Proof of Lemma~\ref{lem:hb-fixed-point-unchanged}}
\label{app:proof-lem-hb-fixed-point-unchanged}

\begin{proof}
By definition of $g$, the heavy-ball recursion is
\begin{align*}
  \begin{bmatrix}Q_{k+1}\\Q_k\end{bmatrix}=g(Q_k,Q_{k-1}).
\end{align*}
Let an augmented fixed point be denoted by $\begin{bmatrix}\bar Q\\\tilde Q\end{bmatrix}$. Then, it must satisfy
\begin{align*}
  \begin{bmatrix}\bar Q\\\tilde Q\end{bmatrix}
  =g(\bar Q,\tilde Q)
  =
  \begin{bmatrix}
    (1-\alpha+\eta)\bar Q-\eta\tilde Q+\alpha F(\bar Q)\\
    \bar Q
  \end{bmatrix}.
\end{align*}
The second block gives $\tilde Q=\bar Q$. Substituting this identity into the
first block yields
\begin{align*}
  \bar Q
  &=(1-\alpha+\eta)\bar Q-\eta\bar Q+\alpha F(\bar Q)\\
  &=(1-\alpha)\bar Q+\alpha F(\bar Q).
\end{align*}
Hence, $\alpha\{F(\bar Q)-\bar Q\}=0$. Since $\alpha>0$, $F(\bar Q)=\bar Q$. The discounted Bellman optimality operator
has the unique fixed point $Q^\star$, so $\bar Q=Q^\star$ and therefore also
$\tilde Q=Q^\star$. Conversely, because $F(Q^\star)=Q^\star$,
\begin{align*}
g(Q^\star,Q^\star)
  &=
  \begin{bmatrix}
    (1-\alpha+\eta)Q^\star-\eta Q^\star+\alpha F(Q^\star)\\
    Q^\star
  \end{bmatrix}\\
  &=
  \begin{bmatrix}Q^\star\\Q^\star\end{bmatrix}.
\end{align*}
Therefore, $(Q^\star,Q^\star)$ is indeed an augmented fixed point.
\end{proof}

\subsection{Proof of Lemma~\ref{lem:hb-exact-sls}}
\label{app:proof-lem-hb-exact-sls}

\begin{proof}
Subtracting $Q^\star=F(Q^\star)$ from~\Cref{eq:simple-heavy-ball} gives
\begin{align*}
Q_{k+1}-Q^\star
  &=(1-\alpha+\eta)(Q_k-Q^\star)
    -\eta(Q_{k-1}-Q^\star)\\
  &\quad+\alpha\{F(Q_k)-F(Q^\star)\}.
\end{align*}
Using~\Cref{lem:bellman_difference_fixed_point_selector}, we have
\begin{align*}
  F(Q_k)-F(Q^\star)=\gamma P\Pi^{\mu_k}(Q_k-Q^\star),
\end{align*}
and hence
\begin{align*}
Q_{k+1}-Q^\star
  &=\left[(1-\alpha+\eta)I+\alpha\gamma P\Pi^{\mu_k}\right]
    (Q_k-Q^\star)\\
  &\quad-\eta(Q_{k-1}-Q^\star).
\end{align*}
Stacking this identity with the lag identity
$Q_k-Q^\star=I(Q_k-Q^\star)+0\cdot(Q_{k-1}-Q^\star)$ gives
\Cref{eq:simple-error}. The convex-hull statement follows from the linearity of
$A_\mu^{\rm HB}$ in $P\Pi^\mu$ and from~\Cref{eq:policy-convex-hull}.
\end{proof}

\subsection{Proof of Lemma~\ref{lem:hb-common-eigenvector-recursion}}
\label{app:proof-lem-hb-common-eigenvector-recursion}

\begin{proof}
Taking $x=a_k$ and $y=a_{k-1}$ in the invariant-subspace calculation above gives
\begin{align*}
  Q_{k+1}-Q^\star
  &=\{\bigl(1-\alpha(1-\gamma)\bigr)+\eta\}a_k\mathbf{1}
    -\eta a_{k-1}\mathbf{1},\\
  Q_k-Q^\star&=a_k\mathbf{1}.
\end{align*}
Thus $Q_{k+1}-Q^\star=a_{k+1}\mathbf{1}$ with $a_{k+1}$ given by
\Cref{eq:common-eigenvector-scalar-recursion}. Stacking the two scalar coordinates gives the companion recursion.
\end{proof}

\subsection{Proof of Lemma~\ref{lem:hb-common-eigenvector-spectral-replacement}}
\label{app:proof-lem-hb-common-eigenvector-spectral-replacement}

\begin{proof}
For every deterministic policy $\pi\in\Theta$,
\begin{align*}
  A_\pi^{\rm QVI}\mathbf{1}
  &=\bigl(1-\alpha(1-\gamma)\bigr)\mathbf{1},\\
  A_\pi^{\rm HB}
  \begin{bmatrix}x\mathbf{1}\\y\mathbf{1}\end{bmatrix}
  &=
  \begin{bmatrix}
    \{\bigl(1-\alpha(1-\gamma)\bigr)+\eta\}x\mathbf{1}-\eta y\mathbf{1}\\
    x\mathbf{1}
  \end{bmatrix}.
\end{align*}
Thus the heavy-ball common-eigenvector coordinates $(x,y)$ are multiplied by
$C_\eta^-$, and
\begin{align*}
  \det(\lambda I-C_\eta^-)
  &=\det
  \begin{bmatrix}
    \lambda-\bigl(1-\alpha(1-\gamma)\bigr)-\eta&\eta\\
    -1&\lambda
  \end{bmatrix}\\
  &=\lambda^2-\bigl(\bigl(1-\alpha(1-\gamma)\bigr)+\eta\bigr)\lambda+
  \eta.
\end{align*}
For every word $(\pi_0,\ldots,\pi_{k-1})$, we can write
\begin{align*}
  A_{\pi_{k-1}}^{\rm HB}\cdots A_{\pi_0}^{\rm HB}
  \begin{bmatrix}x\mathbf{1}\\y\mathbf{1}\end{bmatrix}
  =
  \begin{bmatrix}x_k\mathbf{1}\\y_k\mathbf{1}\end{bmatrix},
  \qquad
  \begin{bmatrix}x_k\\y_k\end{bmatrix}=(C_\eta^-)^k\begin{bmatrix}x\\y\end{bmatrix}.
\end{align*}
Therefore the product growth of $\mathcal A^{\rm HB}$ is at least the product
growth of $(C_\eta^-)^k$, and
\begin{align*}
  \rho(\mathcal A^{\rm HB})
  &\ge
  \limsup_{k\to\infty}\rho((C_\eta^-)^k)^{1/k}
  =\rho(C_\eta^-).
\end{align*}
This completes the proof.
\end{proof}

\subsection{Proof of Proposition~\ref{prop:common_eigenvector_acceleration}}
\label{app:proof-prop-common-eigenvector-acceleration}

\begin{proof}
By~\Cref{prop:standard_q_jsr}, the characteristic polynomial in~\Cref{eq:hb-common-eigenvector-poly} can be written as
\begin{align}
  \lambda^2-\bigl(\rho(\mathcal A^{\rm QVI})+\eta\bigr)\lambda+\eta=0.\label{eq:hb-scaled-characteristic}
\end{align}
The inequality $\rho(C_\eta^-)<\rho(\mathcal A^{\rm QVI})$ is equivalent to the
condition that both roots of this polynomial satisfy
$|\lambda|<\rho(\mathcal A^{\rm QVI})$. With
$\lambda=\rho(\mathcal A^{\rm QVI})z$,~\Cref{eq:hb-scaled-characteristic} can be written as
\begin{align*}
0=z^2- \left(1+\frac{\eta}{\rho(\mathcal A^{\rm QVI})}\right)z
    +\frac{\eta}{\rho(\mathcal A^{\rm QVI})^2}.
\end{align*}
We now derive a condition under which $|z|<1$.
To this end, we apply~\Cref{lem:jury-quadratic} with
\begin{align*}
  a=1+\frac{\eta}{\rho(\mathcal A^{\rm QVI})},
  \qquad
  b=\frac{\eta}{\rho(\mathcal A^{\rm QVI})^2}.
\end{align*}
In particular, the three Jury inequalities from~\Cref{lem:jury-quadratic} are
\begin{align*}
  1-a+b
  &=1-
    \left(1+\frac{\eta}{\rho(\mathcal A^{\rm QVI})}\right)
    +\frac{\eta}{\rho(\mathcal A^{\rm QVI})^2}\\
  &=\frac{\eta\bigl(1-\rho(\mathcal A^{\rm QVI})\bigr)}
    {\rho(\mathcal A^{\rm QVI})^2}>0,\\
  1+a+b
  &=1+
    \left(1+\frac{\eta}{\rho(\mathcal A^{\rm QVI})}\right)
    +\frac{\eta}{\rho(\mathcal A^{\rm QVI})^2}\\
  &=2+\frac{\eta}{\rho(\mathcal A^{\rm QVI})}
    +\frac{\eta}{\rho(\mathcal A^{\rm QVI})^2}>0,\\
  1-b
  &=1-\frac{\eta}{\rho(\mathcal A^{\rm QVI})^2}>0.
\end{align*}
Under the standing discounted and relaxation assumptions, these conditions are equivalent to
\begin{align*}
  \eta>0,
  \qquad
  \eta<\rho(\mathcal A^{\rm QVI})^2.
\end{align*}
Therefore
\begin{align*}
  \rho(C_\eta^-)<\rho(\mathcal A^{\rm QVI})
  &\Longleftrightarrow
  0<\eta<\rho(\mathcal A^{\rm QVI})^2.
\end{align*}
Finally,
\begin{align*}
\rho(\mathcal A^{\rm HB})&<\rho(\mathcal A^{\rm QVI})\\
&\Longrightarrow
  \rho(C_\eta^-)\le\rho(\mathcal A^{\rm HB})<\rho(\mathcal A^{\rm QVI}),
\end{align*}
by~\Cref{lem:hb-common-eigenvector-spectral-replacement}. Hence the strict
standard-family improvement requires $0<\eta<\rho(\mathcal A^{\rm QVI})^2$.
\end{proof}

\subsection{Proof of Proposition~\ref{prop:hb-jsr-upper-bound}}
\label{app:proof-prop-hb-jsr-upper-bound}

\begin{proof}
Let
\begin{align*}
  \begin{bmatrix}x_{j+1}\\x_j\end{bmatrix}
  =A_{\mu_j}^{\rm HB}\begin{bmatrix}x_j\\x_{j-1}\end{bmatrix},
  \qquad
  \mu_j\text{ is any stochastic policy}.
\end{align*}
For every $j\ge0$,
\begin{align*}
x_{j+1}
  &=\{(1-\alpha+\eta)I+\alpha\gamma P\Pi^{\mu_j}\}x_j
    -\eta x_{j-1},\\
\|x_{j+1}\|_\infty
  &\le(1-\alpha+\eta)\|x_j\|_\infty
     +\alpha\gamma\|P\Pi^{\mu_j}x_j\|_\infty\\
  &\quad+
     \eta\|x_{j-1}\|_\infty\\
  &\le(1-\alpha+\eta+\alpha\gamma)\|x_j\|_\infty
     +\eta\|x_{j-1}\|_\infty\\
  &=\{1-\alpha(1-\gamma)+\eta\}\|x_j\|_\infty
     +\eta\|x_{j-1}\|_\infty,\\
\|x_j\|_\infty&=\|x_j\|_\infty.
\end{align*}
Hence, componentwise in $\R^2_+$,
\begin{align*}
  \begin{bmatrix}\|x_{j+1}\|_\infty\\\|x_j\|_\infty\end{bmatrix}
  \le
  C_\eta^+
  \begin{bmatrix}\|x_j\|_\infty\\\|x_{j-1}\|_\infty\end{bmatrix},
\end{align*}
and induction gives
\begin{align*}
  \begin{bmatrix}\|x_k\|_\infty\\\|x_{k-1}\|_\infty\end{bmatrix}
  \le
  (C_\eta^+)^k
  \begin{bmatrix}\|x_0\|_\infty\\\|x_{-1}\|_\infty\end{bmatrix}.
\end{align*}
Now fix an arbitrary initial augmented vector
\begin{align*}
  w_0:=\begin{bmatrix}x_0\\x_{-1}\end{bmatrix}
\end{align*}
and let
\begin{align*}
  w_k:=A_{\mu_{k-1}}^{\rm HB}\cdots A_{\mu_0}^{\rm HB}w_0
  =\begin{bmatrix}x_k\\x_{k-1}\end{bmatrix}.
\end{align*}
The preceding componentwise inequality gives
\begin{align*}
  \begin{bmatrix}\|x_k\|_\infty\\\|x_{k-1}\|_\infty\end{bmatrix}
  &\le
  (C_\eta^+)^k
  \begin{bmatrix}\|x_0\|_\infty\\\|x_{-1}\|_\infty\end{bmatrix}.
\end{align*}
Since $C_\eta^+\ge0$, also $(C_\eta^+)^k\ge0$. Hence, for each row $i=1,2$,
\begin{align*}
&\left[(C_\eta^+)^k
  \begin{bmatrix}\|x_0\|_\infty\\\|x_{-1}\|_\infty\end{bmatrix}\right]_i\\
&\quad=
  \sum_{j=1}^2\left[(C_\eta^+)^k\right]_{ij}
  \begin{bmatrix}\|x_0\|_\infty\\\|x_{-1}\|_\infty\end{bmatrix}_j\\
&\quad\le
  \sum_{j=1}^2\left[(C_\eta^+)^k\right]_{ij}
  \bigl(\|x_0\|_\infty+\|x_{-1}\|_\infty\bigr)\\
&\quad\le
  2\left\|(C_\eta^+)^k\right\|_\infty
  \left\|\begin{bmatrix}x_0\\x_{-1}\end{bmatrix}\right\|_\infty.
\end{align*}
Therefore
\begin{align*}
  \left\|A_{\mu_{k-1}}^{\rm HB}\cdots A_{\mu_0}^{\rm HB}w_0\right\|_\infty
  &=\left\|\begin{bmatrix}x_k\\x_{k-1}\end{bmatrix}\right\|_\infty\\
  &=\max\{\|x_k\|_\infty,\|x_{k-1}\|_\infty\}\\
  &\le
  2\left\|(C_\eta^+)^k\right\|_\infty
  \left\|w_0\right\|_\infty.
\end{align*}
Taking the supremum over all $w_0\ne0$ gives, for any product
$A_{\mu_{k-1}}^{\rm HB}\cdots A_{\mu_0}^{\rm HB}$,
\begin{align*}
  \left\|A_{\mu_{k-1}}^{\rm HB}\cdots A_{\mu_0}^{\rm HB}\right\|_\infty
  \le
  2\left\|(C_\eta^+)^k\right\|_\infty.
\end{align*}
Therefore
\begin{align*}
  \rho(\mathcal A^{\rm HB})
  &\le
  \lim_{k\to\infty}
  \left(2\left\|(C_\eta^+)^k\right\|_\infty\right)^{1/k}
  =\rho(C_\eta^+).
\end{align*}
Moreover,
\begin{align*}
  \det(\lambda I-C_\eta^+)
  &=
  \det\begin{bmatrix}
    \lambda-\bigl(1-\alpha(1-\gamma)\bigr)-\eta&-\eta\\
    -1&\lambda
  \end{bmatrix}\\
  &=\lambda^2-\bigl(\bigl(1-\alpha(1-\gamma)\bigr)+\eta\bigr)\lambda-
  \eta,
\end{align*}
so, since $C_\eta^+\ge0$,
\begin{align*}
\rho(C_\eta^+)
  &=\frac{1-\alpha(1-\gamma)+\eta}{2}\\
  &\quad+
  \frac{\sqrt{\bigl(1-\alpha(1-\gamma)+\eta\bigr)^2+4\eta}}{2}.
\end{align*}
Finally,
\begin{align*}
\rho(C_\eta^+)<1
  &\Longleftrightarrow
  1-\alpha(1-\gamma)+\eta\\
  &\quad+
  \sqrt{\bigl(1-\alpha(1-\gamma)+\eta\bigr)^2+4\eta}<2\\
  &\Longleftrightarrow
  \sqrt{\bigl(1-\alpha(1-\gamma)+\eta\bigr)^2+4\eta}\\
  &\quad<1+\alpha(1-\gamma)-\eta\\
  &\Longleftrightarrow
  \bigl(1-\alpha(1-\gamma)+\eta\bigr)^2+4\eta\\
  &\quad<\bigl(1+\alpha(1-\gamma)-\eta\bigr)^2\\
  &\Longleftrightarrow 2\eta<\alpha(1-\gamma),
\end{align*}
and~\Cref{eq:hb-simple-stability-sufficient} gives
$\rho(\mathcal A^{\rm HB})\le\rho(C_\eta^+)<1$.
\end{proof}

\subsection{Proof of Lemma~\ref{lem:projected-hb-error-system}}
\label{app:proof-lem-projected-hb-error-system}

\begin{proof}
Write the unprojected heavy-ball error as
\begin{align*}
  Q_{k+1}-Q^\star
  &=\left(A_{\mu_k}^{\rm QVI}+\eta I\right)(Q_k-Q^\star)
    -\eta(Q_{k-1}-Q^\star).
\end{align*}
Applying $U^\top$, using the standard projection identity from
\Cref{lem:projected-error-systems}, and using $z_j=U^\top(Q_j-Q^\star)$ gives
\begin{align*}
  z_{k+1}
  &=\bar A_{\mu_k}^{\rm QVI}z_k+\eta z_k-\eta z_{k-1}\\
  &=\left(\bar A_{\mu_k}^{\rm QVI}+\eta I_{n-1}\right)z_k
    -\eta z_{k-1}.
\end{align*}
Stacking this identity with $z_k=I_{n-1}z_k+0\cdot z_{k-1}$ gives the
projected heavy-ball system.
\end{proof}

\subsection{Proof of Lemma~\ref{lem:hb-common-eigenvector-projected-jsr-decomposition}}
\label{app:proof-lem-hb-common-eigenvector-projected-jsr-decomposition}

\begin{proof}
Let
\begin{align*}
  q:=\frac{\mathbf{1}}{\sqrt{n}},
  \qquad
  S:=[q\ U],
\end{align*}
where $U$ is the matrix fixed in~\Cref{eq:orthogonal-projection,eq:projected-ql-family}; equivalently,
\begin{align*}
  U^\top U=I_{n-1},
  \qquad
  U^\top q=0,
  \qquad
  UU^\top=I-qq^\top.
\end{align*}
By~\Cref{lem:standard-constant-projected-block-form}, for every deterministic
policy $\pi$,
\begin{align*}
  S^\top A_\pi^{\rm QVI}S
  =
  \begin{bmatrix}
    \bigl(1-\alpha(1-\gamma)\bigr)&q^\top A_\pi^{\rm QVI}U\\
    0&\bar A_\pi^{\rm QVI}
  \end{bmatrix}.
\end{align*}
Since
\begin{align*}
  A_\pi^{\rm HB}
  &=
  \begin{bmatrix}
    A_\pi^{\rm QVI}+\eta I&-\eta I\\
    I&0
  \end{bmatrix},
\end{align*}
one obtains the exact augmented coordinate identity
\begin{equation*}
\begin{aligned}
&\begin{bmatrix}
    q^\top&0\\
    0&q^\top\\
    U^\top&0\\
    0&U^\top
  \end{bmatrix}
  A_\pi^{\rm HB}
  \begin{bmatrix}
    q&0&U&0\\
    0&q&0&U
  \end{bmatrix}\\
  &\quad=
  {\setlength{\arraycolsep}{3pt}
  \begin{bmatrix}
    1-\alpha(1-\gamma)+\eta&-\eta&q^\top A_\pi^{\rm QVI}U&0\\
    1&0&0&0\\
    0&0&\bar A_\pi^{\rm QVI}+\eta I_{n-1}
      &-\eta I_{n-1}\\
    0&0&I_{n-1}&0
  \end{bmatrix}}\\
  &\quad=
  \begin{bmatrix}
    C_\eta^-&B_\pi\\
    0&\bar A_\pi^{\rm HB}
  \end{bmatrix}.
\end{aligned}
\end{equation*}
Here
\begin{equation*}
  B_\pi:=
  \begin{bmatrix}
    q^\top A_\pi^{\rm QVI}U&0\\
    0&0
  \end{bmatrix}.
\end{equation*}
Consequently, for every sequence of policies $(\pi_0,\ldots,\pi_{k-1})$, we have
\begin{equation*}
\begin{aligned}
&\begin{bmatrix}
    C_\eta^-&B_{\pi_{k-1}}\\
    0&\bar A_{\pi_{k-1}}^{\rm HB}
  \end{bmatrix}
  \times \cdots \times
  \begin{bmatrix}
    C_\eta^-&B_{\pi_0}\\
    0&\bar A_{\pi_0}^{\rm HB}
  \end{bmatrix} =  \begin{bmatrix}
    (C_\eta^-)^k&*\\
    0&\bar A_{\pi_{k-1}}^{\rm HB}\cdots
      \bar A_{\pi_0}^{\rm HB}
  \end{bmatrix}.
\end{aligned}
\end{equation*}
Here $*$ denotes an unspecified block of conforming dimension whose value is not
used in the JSR computation.
By the block triangular JSR decomposition in~\Cref{lem:block-triangular-jsr},
\begin{align*}
  \rho(\mathcal A^{\rm HB})=
  \max\left\{\rho(C_\eta^-),\rho(\bar{\mathcal A}^{\rm HB})\right\}.
\end{align*}
By~\Cref{lem:convex-hull-jsr}, the same equality also holds with
$\mathcal A^{\rm HB}$ replaced by $\co(\mathcal A^{\rm HB})$.
\end{proof}

\subsection{Proof of Lemma~\ref{lem:projected-hb-weak-bound}}
\label{app:proof-lem-projected-hb-weak-bound}

\begin{proof}
This follows immediately from the exact decomposition in
\Cref{lem:hb-common-eigenvector-projected-jsr-decomposition}: $\rho(\mathcal A^{\rm HB})
  =\max\left\{\rho(C_\eta^-),\rho(\bar{\mathcal A}^{\rm HB})\right\}$.
\end{proof}

\subsection{Proof of Lemma~\ref{lem:full-hb-zero-momentum}}
\label{app:proof-lem-full-hb-zero-momentum}

\begin{proof}
When $\eta=0$, every full heavy-ball matrix has the form
\begin{align*}
  A_\pi^{\rm HB}
  =
  \begin{bmatrix}
    A_\pi^{\rm QVI}&0\\
    I&0
  \end{bmatrix}.
\end{align*}
For every sequence of deterministic policies $(\pi_0,\ldots,\pi_{k-1})$ with $k\ge2$, we obtain
\begin{align}
  A_{\pi_{k-1}}^{\rm HB}\cdots A_{\pi_0}^{\rm HB}
  =
  \begin{bmatrix}
    A_{\pi_{k-1}}^{\rm QVI}\cdots A_{\pi_0}^{\rm QVI}&0\\
    A_{\pi_{k-2}}^{\rm QVI}\cdots A_{\pi_0}^{\rm QVI}&0
  \end{bmatrix}.
  \label{eq:zero-momentum-full-product}
\end{align}
The upper-left block gives the lower bound in the JSR sense. Indeed, using the
block projection onto the first component, for every such word we have
\begin{align*}
  \left\lVert
    A_{\pi_{k-1}}^{\rm QVI}\cdots A_{\pi_0}^{\rm QVI}
  \right\rVert_2
  &=
  \left\lVert
  \begin{bmatrix}I&0\end{bmatrix}
  A_{\pi_{k-1}}^{\rm HB}\cdots A_{\pi_0}^{\rm HB}
  \begin{bmatrix}I\\0\end{bmatrix}
  \right\rVert_2 \\
  &\le
  \left\lVert
    A_{\pi_{k-1}}^{\rm HB}\cdots A_{\pi_0}^{\rm HB}
  \right\rVert_2.
\end{align*}
After maximizing over all words of length $k$, taking $k$th roots, and passing
to the JSR limit in~\Cref{def:jsr}, this gives
$\rho(\mathcal A^{\rm QVI})\le\rho(\mathcal A^{\rm HB})$. Conversely, the block product in~\Cref{eq:zero-momentum-full-product} contains only two nonzero blocks. By the block-column norm bound in~\Cref{lem:block-column-norm-bound}, applying this inequality to the zero-momentum product gives, for every $k\ge2$,
\begin{align*}
&\left\lVert
    A_{\pi_{k-1}}^{\rm HB}\cdots A_{\pi_0}^{\rm HB}
  \right\rVert_2 \le
  \left\lVert
    A_{\pi_{k-1}}^{\rm QVI}\cdots A_{\pi_0}^{\rm QVI}
  \right\rVert_2
  +
  \left\lVert
    A_{\pi_{k-2}}^{\rm QVI}\cdots A_{\pi_0}^{\rm QVI}
  \right\rVert_2.
\end{align*}
Therefore, we obtain
\begin{align*}
&\max_{\pi_0,\ldots,\pi_{k-1}}
  \left\|A_{\pi_{k-1}}^{\rm HB}\cdots A_{\pi_0}^{\rm HB}\right\|_2 \\
&\quad\le
  \max_{\pi_0,\ldots,\pi_{k-1}}
  \left\|A_{\pi_{k-1}}^{\rm QVI}\cdots A_{\pi_0}^{\rm QVI}\right\|_2
  +
  \max_{\pi_0,\ldots,\pi_{k-2}}
  \left\|A_{\pi_{k-2}}^{\rm QVI}\cdots A_{\pi_0}^{\rm QVI}\right\|_2.
\end{align*}
The two terms on the right do introduce a factor of $2$ after we bound their
sum by twice their maximum. More explicitly,
\begin{align*}
&\left(
  \max_{\pi_0,\ldots,\pi_{k-1}}
  \left\|A_{\pi_{k-1}}^{\rm HB}\cdots A_{\pi_0}^{\rm HB}\right\|_2
  \right)^{1/k} \\
&\quad\le 2^{1/k}\max\left\{
  \left(
  \max_{\pi_0,\ldots,\pi_{k-1}}
  \left\|A_{\pi_{k-1}}^{\rm QVI}\cdots A_{\pi_0}^{\rm QVI}\right\|_2
  \right)^{1/k},
  \left(
  \max_{\pi_0,\ldots,\pi_{k-2}}
  \left\|A_{\pi_{k-2}}^{\rm QVI}\cdots A_{\pi_0}^{\rm QVI}\right\|_2
  \right)^{1/k}
  \right\}.
\end{align*}
Here $2^{1/k}\to1$. The first term inside the maximum converges to
$\rho(\mathcal A^{\rm QVI})$ by the JSR definition. The second term has the same
limit because
\begin{align*}
&\left(
  \max_{\pi_0,\ldots,\pi_{k-2}}
  \left\|A_{\pi_{k-2}}^{\rm QVI}\cdots A_{\pi_0}^{\rm QVI}\right\|_2
  \right)^{1/k} \\
&\quad=
  \left[
  \left(
  \max_{\pi_0,\ldots,\pi_{k-2}}
  \left\|A_{\pi_{k-2}}^{\rm QVI}\cdots A_{\pi_0}^{\rm QVI}\right\|_2
  \right)^{1/(k-1)}
  \right]^{(k-1)/k}
  \to\rho(\mathcal A^{\rm QVI}),
\end{align*}
since $(k-1)/k\to1$. Thus the extra factor coming from the two-term sum
vanishes in the JSR limit, and we obtain
$\rho(\mathcal A^{\rm HB})\le\rho(\mathcal A^{\rm QVI})$. Combining the two inequalities proves
\begin{align*}
  \rho(\mathcal A^{\rm HB})
  =
  \rho(\mathcal A^{\rm QVI})
\end{align*}
at $\eta=0$.
\end{proof}

\subsection{Proof of Lemma~\ref{lem:projected-hb-zero-momentum}}
\label{app:proof-lem-projected-hb-zero-momentum}

\begin{proof}
The proof is analogous to that of~\Cref{lem:full-hb-zero-momentum} and is omitted.
\end{proof}

\subsection{Proof of Theorem~\ref{thm:hb-global-jsr-improvement}}
\label{app:proof-thm-hb-global-jsr-improvement}

\begin{proof}
By~\Cref{prop:standard_q_jsr}, the standard benchmark is fixed by the
common eigenvector direction. At $\eta=0$, \Cref{lem:projected-hb-zero-momentum} gives
\begin{align*}
  \rho(\bar{\mathcal A}^{\rm HB})
  &=\rho(\bar{\mathcal A}^{\rm QVI}).
\end{align*}
Using the strict gap in~\Cref{eq:projected-ql-gap},
\begin{align*}
  \rho(\bar{\mathcal A}^{\rm HB})
  <\rho(\mathcal A^{\rm QVI}).
\end{align*}
By the finite-family JSR continuity stated in
\Cref{lem:projected-hb-jsr-continuity}, and by the standard continuity of the
JSR for finite matrix families~\citep[Chapter~1]{jungers2009joint},
\begin{align*}
  \exists\eta_0>0\quad
  \forall\eta\in[0,\eta_0):
  \qquad
  \rho(\bar{\mathcal A}^{\rm HB})<\rho(\mathcal A^{\rm QVI}).
\end{align*}
Thus, whenever $0<\eta<\eta_0$,
\begin{equation}
  \rho(\bar{\mathcal A}^{\rm HB})<\rho(\mathcal A^{\rm QVI}).
  \label{eq:projected-jsr-condition}
\end{equation}
Here the left-hand side is the JSR of the momentum-augmented projected
heavy-ball family in~\Cref{eq:projected-hb-family}; equivalently, it is the JSR
of the projected lag system
\begin{align*}
  \begin{bmatrix}z_{k+1}\\z_k\end{bmatrix}
  =
  \begin{bmatrix}
    \bar A_{\mu_k}^{\rm QVI}+\eta I&-\eta I\\
    I&0
  \end{bmatrix}
  \begin{bmatrix}z_k\\z_{k-1}\end{bmatrix},
\end{align*}
whose first block contains both the projected standard term
$\bar A_{\mu_k}^{\rm QVI}z_k$ and the momentum contribution
$+\eta z_k-\eta z_{k-1}$.
If in addition
\begin{align*}
  0<\eta<\rho(\mathcal A^{\rm QVI})^2,
\end{align*}
then~\Cref{prop:common_eigenvector_acceleration} gives
\begin{align*}
  \rho(C_\eta^-)<\rho(\mathcal A^{\rm QVI}).
\end{align*}
Combining this inequality with~\Cref{eq:projected-jsr-condition} and
\Cref{lem:hb-common-eigenvector-projected-jsr-decomposition} yields
\begin{align*}
  \rho(\mathcal A^{\rm HB}) =\max\left\{\rho(C_\eta^-),\rho(\bar{\mathcal A}^{\rm HB})\right\}<\rho(\mathcal A^{\rm QVI}).
\end{align*}
Therefore, it follows that $\rho(\mathcal A^{\rm HB}) <\rho(\mathcal A^{\rm QVI})$.
For every admissible stochastic selector $\mu_k$, one gets
\begin{align*}
  A_{\mu_k}^{\rm HB}\in\co(\mathcal A^{\rm HB}),
  \qquad
  \rho(\co(\mathcal A^{\rm HB}))=\rho(\mathcal A^{\rm HB})<1,
\end{align*}
and~\Cref{lem:basic_jsr_convergence} gives uniform exponential convergence.
\end{proof}

\section{Heavy-Ball Q-Learning with Linear Function Approximation}
\label{app:hb-lfa}

\subsection{Linear-Approximation Setup}
\label{app:lfa-setup}

This subsection collects the linear function approximation (LFA) notation used in the projected Q-value iteration (PQVI) extension.
Let $\Phi\in\R^{n\times m}$ be a feature matrix. Its
row corresponding to $(s,a)$ is $\phi(s,a)^\top$, where
$\phi(s,a)\in\R^m$. The LFA representation of the Q-function is $Q_\theta:=\Phi\theta$. For an LFA parameter $\theta$, define the corresponding greedy value vector by
\begin{align*}
V_\theta(s)&:=\max_{a\in\mathcal A}\phi(s,a)^\top\theta,\\
V_\theta&:=(V_\theta(1),\ldots,V_\theta(|\mathcal S|))^\top.
\end{align*}
We use $d$ to denote a state-action sampling distribution on
$\mathcal S\times\mathcal A$. In the i.i.d.\ observation model, $d$ is the
sampling distribution of $(s_k,a_k)$; in the Markovian observation model, $d$ is
the stationary state-action distribution of the behavior-induced chain. Define $D:=\diag(d(s,a))_{(s,a)\in\mathcal S\times\mathcal A}$. The feature and sampling conditions used in Appendix~\ref{app:pqvi-lfa} are
collected in the following standing assumption. These conditions ensure that the
projected residual and the Gram matrix used later are well defined in the stated
coordinates.
\begin{assumption}
\label{ass:feature_sampling}
The feature matrix $\Phi$ has full column rank, and the sampling distribution
has full support: $d(s,a)>0$ for every
$(s,a)\in\mathcal S\times\mathcal A$. Equivalently, the diagonal sampling
matrix satisfies $D\succ0$.
\end{assumption}
This assumption also gives $\Phi^\top D\Phi\succ0$.
The SLS model above will be used to analyze Bellman updates by
viewing each realization of the maximum operator as a policy-selected mode. To
make this passage from the Bellman maximum to a switching mode precise, we use
one final setup fact. It says that the difference between two greedy value
vectors is itself a value vector generated by a suitable stochastic policy
applied to the parameter difference. This is the device that converts the
Bellman maximum into a switching mode without changing the parameter space.
\begin{lemma}
\label{lem:stochastic_policy_linearization}
For every pair $\theta,\bar\theta\in\R^m$, there exists a stochastic policy
$\mu_{\theta,\bar\theta}$ such that
\begin{align*}
V_\theta-V_{\bar\theta}
=
\Pi^{\mu_{\theta,\bar\theta}}\Phi(\theta-\bar\theta).
\end{align*}
\end{lemma}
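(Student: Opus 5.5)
The plan is to reuse, state by state, the same convex-interpolation argument as in \Cref{lem:bellman_difference_selector}. The only change is that the Q-vectors $Q$ and $\bar Q$ are replaced by the linearly parametrized vectors $\Phi\theta$ and $\Phi\bar\theta$. By definition, $V_\theta(s)=\max_{a}(\Phi\theta)(s,a)$, that is, $V_\theta=V_{Q}$ with $Q:=\Phi\theta$. Likewise $V_{\bar\theta}=V_{\bar Q}$ with $\bar Q:=\Phi\bar\theta$. So the statement follows by applying \Cref{lem:bellman_difference_selector} to the pair $(Q,\bar Q)=(\Phi\theta,\Phi\bar\theta)$ and setting $\mu_{\theta,\bar\theta}:=\mu_{\Phi\theta,\Phi\bar\theta}$. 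This gives
\begin{align*}
V_\theta-V_{\bar\theta}=\Pi^{\mu_{\Phi\theta,\Phi\bar\theta}}(\Phi\theta-\Phi\bar\theta)=\Pi^{\mu_{\theta,\bar\theta}}\Phi(\theta-\bar\theta),
\end{align*}
where the last step is linearity of matrix multiplication.

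For a self-contained version, I would redo the key inequality directly. Fix $s$ and let $u_a:=\phi(s,a)^\top(\theta-\bar\theta)$. Let $a^\star$ be a maximizer for $\theta$ and $\bar a$ a maximizer for $\bar\theta$. Then
\begin{align*}
u_{\bar a}\le V_\theta(s)-V_{\bar\theta}(s)\le u_{a^\star}.
\end{align*}
The upper bound holds because $V_{\bar\theta}(s)\ge\phi(s,a^\star)^\top\bar\theta$, and the lower bound holds symmetrically. Hence $V_\theta(s)-V_{\bar\theta}(s)$ lies in $[\min_a u_a,\max_a u_a]$. It is therefore a convex combination of the $u_a$, and in fact one supported on the two actions $a^\star$ and $\bar a$. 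I would take those convex weights as $\mu_{\theta,\bar\theta}(\cdot\mid s)\in\Delta_{|\mathcal A|}$.

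Finally, I would stack over states. By the definition of $\Pi^\mu$, the $s$th row of $\Pi^\mu x$ equals $\sum_a\mu(a\mid s)x(s,a)$. With $x=\Phi(\theta-\bar\theta)$, this reproduces each row identity.

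I do not expect a real obstacle. The only point needing care is the sandwich inequality that places the value difference between two coordinate differences, together with index bookkeeping for the action-block ordering in $\Pi^\mu$. No feature assumption, such as \Cref{ass:feature_sampling}, is needed here.
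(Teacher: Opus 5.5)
Your proposal is correct and follows the paper's proof. The paper also fixes a state, sets $u_a=\phi(s,a)^\top(\theta-\bar\theta)$, notes that the value difference lies between $\min_a u_a$ and $\max_a u_a$, takes the resulting convex weights as $\mu_{\theta,\bar\theta}(s)$, and stacks over states; your explicit sandwich $u_{\bar a}\le V_\theta(s)-V_{\bar\theta}(s)\le u_{a^\star}$ (and the equivalent reduction to \Cref{lem:bellman_difference_selector} with $(Q,\bar Q)=(\Phi\theta,\Phi\bar\theta)$) only justifies a step the paper asserts without proof.
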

\begin{proof}
Fix a state $s$ and define
\begin{align*}
  u_a:=\phi(s,a)^\top(\theta-\bar\theta),
  \qquad a\in\mathcal A.
\end{align*}
The scalar difference
\begin{align*}
  \max_{a\in\mathcal A}\phi(s,a)^\top\theta
  -\max_{a\in\mathcal A}\phi(s,a)^\top\bar\theta
\end{align*}
lies between $\min_a u_a$ and $\max_a u_a$. Hence it can be written as a convex
combination of the numbers $\{u_a:a\in\mathcal A\}$. Choose one such convex
combination and denote its weights by $\mu_{\theta,\bar\theta}(s)\in\Delta_{|\mathcal A|}$.
Doing this independently for each state gives a stochastic policy
$\mu_{\theta,\bar\theta}$ satisfying
\begin{align*}
  V_\theta(s)-V_{\bar\theta}(s)
  =\sum_{a\in\mathcal A}\mu_{\theta,\bar\theta}(a\mid s)\,
  \phi(s,a)^\top(\theta-\bar\theta),
\end{align*}
for every $s\in\mathcal S$. Stacking the state-wise identities gives
$V_\theta-V_{\bar\theta}=\Pi^{\mu_{\theta,\bar\theta}}\Phi(\theta-\bar\theta)$.
\end{proof}

\subsection{Projected Q-Value Iteration with Linear Function Approximation}
\label{sec:linear_fa_projected_error_system}
\label{app:pqvi-lfa}

This subsection considers projected Q-value iteration (PQVI) with LFA, following the classical projected-equation viewpoint in approximate dynamic programming~\citep{bertsekas1996neuro,tsitsiklis1997analysis}. We add a heavy-ball momentum term to PQVI and study its convergence and acceleration through the same projected-error logic introduced in~\Cref{sec:standard_q_baseline}: the PQVI SLS is identified first, and its JSR is used as the benchmark before the heavy-ball lag is added. To this end, let
\begin{align*}
  Q_\theta:=\Phi\theta,
  \qquad
  \Phi\in\R^{n\times m},
\end{align*}
where $\Phi$ has full column rank. Let $D\succ0$ be a diagonal weighting matrix
and define the $D$-weighted projection
\begin{align*}
  \Pi_D:=\Phi(\Phi^\top D\Phi)^{-1}\Phi^\top D.
\end{align*}
The standard PQVI recursion is written as
\begin{align*}
  \theta_{k+1}
  =\theta_k
  +\alpha(\Phi^\top D\Phi)^{-1}\Phi^\top D\{F(\Phi\theta_k)-\Phi\theta_k\}.
\end{align*}
\begin{lemma}
\label{lem:linear-fa-switching-systems}
Let $\theta^\star$ be a projected Bellman fixed point, i.e.,
\begin{align*}
  \Phi\theta^\star=\Pi_D F(\Phi\theta^\star).
\end{align*}
For any stochastic policy $\mu$, let us define the PQVI mode
\begin{align*}
  A_\mu^{\rm LQVI}:=(1-\alpha)I_m
  +\alpha\gamma(\Phi^\top D\Phi)^{-1}\Phi^\top D P\Pi^\mu\Phi.
\end{align*}
Then, the PQVI error system can be written as
\begin{align*}
  \theta_{k+1}-\theta^\star
  =A_{\mu_k}^{\rm LQVI}(\theta_k-\theta^\star),
\end{align*}
where $\mu_k$ is a stochastic Bellman-difference selector from~\Cref{eq:bellman-difference-selector-general}. The corresponding SLS family is
\begin{align*}
  \mathcal A^{\rm LQVI}:=\left\{A_\pi^{\rm LQVI}:\pi\in\Theta\right\}.
\end{align*}
For every stochastic selector $\mu_k$, one has
$A_{\mu_k}^{\rm LQVI}\in\co(\mathcal A^{\rm LQVI})$.
\end{lemma}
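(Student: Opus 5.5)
The plan is to mirror the tabular argument of \Cref{lem:standard-q-error-system} in parameter space: subtract the fixed-point identity from the PQVI update, linearize the Bellman maximum with a stochastic selector, and read off the mode. First, since $\Phi$ has full column rank and $\Phi^\top D\Phi\succ0$, the condition $\Phi\theta^\star=\Pi_D F(\Phi\theta^\star)$ means $\Phi\theta^\star=\Phi(\Phi^\top D\Phi)^{-1}\Phi^\top D F(\Phi\theta^\star)$. Multiplying on the left by $(\Phi^\top D\Phi)^{-1}\Phi^\top D$ and using $(\Phi^\top D\Phi)^{-1}\Phi^\top D\Phi=I_m$ then gives
\begin{align*}
  (\Phi^\top D\Phi)^{-1}\Phi^\top D\{F(\Phi\theta^\star)-\Phi\theta^\star\}=0 .
\end{align*}
In words, $\theta^\star$ is a fixed point of the PQVI map. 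Subtracting this zero from the update, and using $(\Phi^\top D\Phi)^{-1}\Phi^\top D\Phi(\theta_k-\theta^\star)=\theta_k-\theta^\star$, yields
\begin{align*}
  \theta_{k+1}-\theta^\star
  =(1-\alpha)(\theta_k-\theta^\star)
  +\alpha(\Phi^\top D\Phi)^{-1}\Phi^\top D\{F(\Phi\theta_k)-F(\Phi\theta^\star)\}.
\end{align*}

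Second, I would linearize the Bellman difference by applying \Cref{lem:bellman_difference_selector} with $Q=\Phi\theta_k$ and $\bar Q=\Phi\theta^\star$. Equivalently, \Cref{lem:stochastic_policy_linearization} gives $V_{\theta_k}-V_{\theta^\star}=\Pi^{\mu_k}\Phi(\theta_k-\theta^\star)$. This produces a stochastic policy $\mu_k$ with
\begin{align*}
  F(\Phi\theta_k)-F(\Phi\theta^\star)=\gamma P\Pi^{\mu_k}\Phi(\theta_k-\theta^\star).
\end{align*}
Substituting this into the previous display and collecting terms gives exactly $\theta_{k+1}-\theta^\star=A_{\mu_k}^{\rm LQVI}(\theta_k-\theta^\star)$.

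Third, for the convex-hull claim, I would use \Cref{eq:policy-convex-hull}, or more precisely its underlying statement $\Pi^\mu=\sum_\pi\lambda_\pi\Pi^\pi$ with product weights $\lambda_\pi$. The map $\mu\mapsto A_\mu^{\rm LQVI}$ is affine in $\Pi^\mu$, and the weights $\lambda_\pi$ sum to one, so the constant part $(1-\alpha)I_m$ distributes across the sum. Hence $A_\mu^{\rm LQVI}=\sum_\pi\lambda_\pi A_\pi^{\rm LQVI}\in\co(\mathcal A^{\rm LQVI})$.

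There is no real obstacle here. The only point needing care is the first step: turning the projected fixed-point equation into a zero of the parameter residual, which requires the full-column-rank and $D\succ0$ assumptions so that the left inverse of $\Phi$ is valid.
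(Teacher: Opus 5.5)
Your proposal is correct and follows essentially the same route as the paper's proof. Both recover $\theta^\star=(\Phi^\top D\Phi)^{-1}\Phi^\top D F(\Phi\theta^\star)$ from the projected fixed-point equation (you via the left inverse $(\Phi^\top D\Phi)^{-1}\Phi^\top D$, the paper via injectivity of $\Phi$), subtract it from the update, linearize the Bellman difference with the stochastic selector, and obtain convex-hull membership from affineness in $P\Pi^\mu$ together with \Cref{eq:policy-convex-hull}.
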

\begin{proof}
Because $\Phi$ has full column rank, the projected fixed-point identity implies
\begin{align*}
  \theta^\star=(\Phi^\top D\Phi)^{-1}\Phi^\top DF(\Phi\theta^\star).
\end{align*}
For each $k$, the selector identity~\Cref{eq:bellman-difference-selector-general}
gives a stochastic policy $\mu_k$ such that
\begin{align*}
  F(\Phi\theta_k)-F(\Phi\theta^\star)
  =\gamma P\Pi^{\mu_k}\Phi(\theta_k-\theta^\star).
\end{align*}
Subtracting the projected fixed-point identity from the standard coefficient
recursion yields
\begin{align*}
  \theta_{k+1}-\theta^\star
  &=(1-\alpha)(\theta_k-\theta^\star)\\
  &\quad+\alpha(\Phi^\top D\Phi)^{-1}\Phi^\top D
    \{F(\Phi\theta_k)-F(\Phi\theta^\star)\}\\
  &=A_{\mu_k}^{\rm LQVI}(\theta_k-\theta^\star).
\end{align*}
The convex-hull statement follows from the linearity of $A_\mu^{\rm LQVI}$ in
$P\Pi^\mu$ and from~\Cref{eq:policy-convex-hull}.
\end{proof}

By adding the momentum term, the corresponding heavy-ball PQVI is
\begin{equation}
\begin{aligned}
\theta_{k+1}
&=\theta_k
  +\alpha(\Phi^\top D\Phi)^{-1}\Phi^\top D
    \{F(\Phi\theta_k)-\Phi\theta_k\}\\
&\quad+\eta(\theta_k-\theta_{k-1}).
\end{aligned}
\label{eq:linear-fa-hb-update}
\end{equation}
When $\Phi=I$, the factor $(\Phi^\top D\Phi)^{-1}\Phi^\top D$ becomes $I$, so
\Cref{eq:linear-fa-hb-update} recovers the tabular heavy-ball recursion in
\Cref{eq:simple-heavy-ball}. We next introduce the corresponding SLS model for the heavy-ball recursion.

\begin{lemma}
\label{lem:linear-fa-heavy-ball-switching-system}
Let $\theta^\star$ be a projected Bellman fixed point,
\begin{align*}
  \Phi\theta^\star=\Pi_D F(\Phi\theta^\star).
\end{align*}
For the heavy-ball recursion~\Cref{eq:linear-fa-hb-update}, define
\begin{equation*}
A_\mu^{\rm LHB}:=
  \begin{bmatrix}
    (1-\alpha+\eta)I_m
    +\alpha\gamma(\Phi^\top D\Phi)^{-1}\Phi^\top D P\Pi^\mu\Phi
    & -\eta I_m\\
    I_m&0
  \end{bmatrix}.
\end{equation*}
Then the heavy-ball PQVI error system can be written as
\begin{align*}
  \begin{bmatrix}\theta_{k+1}-\theta^\star\\\theta_k-\theta^\star\end{bmatrix}
  =A_{\mu_k}^{\rm LHB}
  \begin{bmatrix}\theta_k-\theta^\star\\\theta_{k-1}-\theta^\star\end{bmatrix},
\end{align*}
where $\mu_k$ is a stochastic Bellman-difference selector from~\Cref{eq:bellman-difference-selector-general}. The corresponding heavy-ball SLS family is
\begin{align*}
  \mathcal A^{\rm LHB}:=\left\{A_\pi^{\rm LHB}:\pi\in\Theta\right\}.
\end{align*}
For every stochastic selector $\mu_k$, one has
$A_{\mu_k}^{\rm LHB}\in\co(\mathcal A^{\rm LHB})$.
\end{lemma}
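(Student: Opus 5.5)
The plan is to mirror the proof of \Cref{lem:linear-fa-switching-systems}, adding the lag term. First I would use full column rank of $\Phi$ and $\Phi^\top D\Phi\succ0$ (\Cref{ass:feature_sampling}) to rewrite the projected fixed-point identity $\Phi\theta^\star=\Pi_D F(\Phi\theta^\star)$ in coefficient form. Multiplying on the left by $(\Phi^\top D\Phi)^{-1}\Phi^\top D$ and using $(\Phi^\top D\Phi)^{-1}\Phi^\top D\Phi=I_m$ gives
\begin{align*}
  \theta^\star=(\Phi^\top D\Phi)^{-1}\Phi^\top DF(\Phi\theta^\star).
\end{align*}
Equivalently, $\alpha(\Phi^\top D\Phi)^{-1}\Phi^\top D\{F(\Phi\theta^\star)-\Phi\theta^\star\}=0$. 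Consequently the pair $(\theta^\star,\theta^\star)$ is a fixed point of the heavy-ball recursion~\Cref{eq:linear-fa-hb-update}, because the momentum term $\eta(\theta^\star-\theta^\star)$ vanishes.

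Next I would subtract this fixed-point identity from~\Cref{eq:linear-fa-hb-update}. Writing $\theta_k=\theta^\star+(\theta_k-\theta^\star)$ and $\theta_k-\theta_{k-1}=(\theta_k-\theta^\star)-(\theta_{k-1}-\theta^\star)$ gives
\begin{align*}
\theta_{k+1}-\theta^\star
&=(1-\alpha+\eta)(\theta_k-\theta^\star)-\eta(\theta_{k-1}-\theta^\star)\\
&\quad+\alpha(\Phi^\top D\Phi)^{-1}\Phi^\top D\{F(\Phi\theta_k)-F(\Phi\theta^\star)\}.
\end{align*}
I would then invoke the selector identity~\Cref{eq:bellman-difference-selector-general} with $Q=\Phi\theta_k$ and $\bar Q=\Phi\theta^\star$. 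This yields a stochastic policy $\mu_k$ with $F(\Phi\theta_k)-F(\Phi\theta^\star)=\gamma P\Pi^{\mu_k}\Phi(\theta_k-\theta^\star)$; \Cref{lem:stochastic_policy_linearization} gives the same thing. Substituting produces exactly the first block row of $A_{\mu_k}^{\rm LHB}$. Stacking with the trivial lag identity $\theta_k-\theta^\star=I_m(\theta_k-\theta^\star)+0$ gives the claimed augmented recursion.

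For the convex-hull claim, note that $A_\mu^{\rm LHB}$ is affine in $P\Pi^\mu$ with all other blocks independent of $\mu$. By~\Cref{eq:policy-convex-hull}, $P\Pi^{\mu}=\sum_\pi\lambda_\pi P\Pi^\pi$ with $\lambda_\pi\ge0$ and $\sum_\pi\lambda_\pi=1$. Because the weights sum to one, the constant blocks are reproduced, and therefore $A_{\mu}^{\rm LHB}=\sum_\pi\lambda_\pi A_\pi^{\rm LHB}$.

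The only mildly delicate step is the first one: the fixed point $\theta^\star$ is defined through the projected equation rather than through $F$ itself, so one must check that the residual term vanishes in coefficient space. This is where full column rank is used. No uniqueness or existence of $\theta^\star$ is needed, since it is assumed in the statement.
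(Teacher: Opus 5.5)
Your proposal is correct and follows essentially the same route as the paper's proof: reduce the projected fixed-point identity to $\theta^\star=(\Phi^\top D\Phi)^{-1}\Phi^\top DF(\Phi\theta^\star)$ using full column rank, subtract it from the heavy-ball update, linearize $F(\Phi\theta_k)-F(\Phi\theta^\star)$ with the stochastic Bellman-difference selector, stack with the lag identity, and get convex-hull membership from the policy convex-hull relation. Your remark that $A_\mu^{\rm LHB}$ is affine in $P\Pi^\mu$, so that weights summing to one reproduce the constant blocks, states this step more precisely than the paper's appeal to ``linearity.''
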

\begin{proof}
Because $\Phi$ has full column rank, the projected fixed-point identity implies
\begin{align*}
  \theta^\star=(\Phi^\top D\Phi)^{-1}\Phi^\top DF(\Phi\theta^\star).
\end{align*}
For each $k$, the selector identity~\Cref{eq:bellman-difference-selector-general}
gives a stochastic policy $\mu_k$ such that
\begin{align*}
  F(\Phi\theta_k)-F(\Phi\theta^\star)
  =\gamma P\Pi^{\mu_k}\Phi(\theta_k-\theta^\star).
\end{align*}
Subtracting the projected fixed-point identity from
\Cref{eq:linear-fa-hb-update} gives
\begin{align*}
\theta_{k+1}-\theta^\star
  &=(1-\alpha+\eta)(\theta_k-\theta^\star)\\
  &\quad+\alpha\gamma(\Phi^\top D\Phi)^{-1}\Phi^\top D\\
  &\qquad\times P\Pi^{\mu_k}\Phi(\theta_k-\theta^\star)\\
  &\quad-\eta(\theta_{k-1}-\theta^\star).
\end{align*}
Stacking this identity with the lag identity gives the stated heavy-ball error
system. The convex-hull statement follows from the linearity of $A_\mu^{\rm LHB}$
in $P\Pi^\mu$ and from~\Cref{eq:policy-convex-hull}.
\end{proof}

Unlike in the tabular case, when linear function approximation is used, it is difficult to apply the common-eigenvector argument or a linear-system recursion argument directly on an invariant subspace. Therefore, we introduce the following additional assumption.

\begin{assumption}
\label{ass:constant-feature}
The feature space contains the constant vector: there exists
$c\in\R^m$ such that
\begin{equation*}
  \Phi c=\mathbf{1}.
\end{equation*}
\end{assumption}
This assumption is needed because, unlike the tabular case, the constant vector need not belong to the span of the chosen features. The assumption is not overly restrictive in many approximate value-function models, because a constant feature is commonly included to represent offsets or baselines; if it is absent, it can be appended as one additional feature without changing the projected-error construction used below. Under this condition, the feature-space coefficient $c$ represents the common constant direction.
Under~\Cref{ass:constant-feature}, for every stochastic policy $\mu$,
\begin{align}
  &(\Phi^\top D\Phi)^{-1}\Phi^\top D P\Pi^\mu\Phi c \notag\\
  &\qquad=(\Phi^\top D\Phi)^{-1}\Phi^\top D P\Pi^\mu\mathbf{1} \notag\\
  &\qquad=(\Phi^\top D\Phi)^{-1}\Phi^\top D\mathbf{1} \notag\\
  &\qquad=(\Phi^\top D\Phi)^{-1}\Phi^\top D\Phi c
  =c.
  \label{eq:feature-constant-invariant}
\end{align}
Hence $\operatorname{span}\left\{c\right\}$ is a common invariant subspace of the induced feature-space
family. The role of the tabular constant direction is now played by the
coefficient direction $c$.

Let $U_c\in\R^{m\times(m-1)}$ have orthonormal columns spanning
$\operatorname{span}\left\{c\right\}^\perp$, so that $U_c^\top U_c=I_{m-1}$ and $U_c^\top c=0$.
Define the projected standard feature-space family
\begin{align*}
\bar A_\pi^{\rm LQVI}&:=U_c^\top A_\pi^{\rm LQVI}U_c,\\
\bar{\mathcal A}^{\rm LQVI}&:=
  \left\{\bar A_\pi^{\rm LQVI}:\pi\in\Theta\right\}.
\end{align*}
and the projected heavy-ball feature-space family
\begin{align*}
\bar A_\pi^{\rm LHB}&:=
  \begin{bmatrix}
    \bar A_\pi^{\rm LQVI}+\eta I_{m-1}&-\eta I_{m-1}\\
    I_{m-1}&0
  \end{bmatrix},\\
\bar{\mathcal A}^{\rm LHB}&:=
  \left\{\bar A_\pi^{\rm LHB}:\pi\in\Theta\right\}.
\end{align*}
With these projected feature-space matrices fixed, the next lemma gives the
standard projected coefficient-error recursion.
\begin{lemma}
\label{lem:linear-fa-standard-projected-error-system}
Assume~\Cref{ass:constant-feature} holds. For the standard projected coefficient
recursion in~\Cref{lem:linear-fa-switching-systems}, the projected coefficient
error satisfies
\begin{align*}
  U_c^\top(\theta_{k+1}-\theta^\star)
  =\bar A_{\mu_k}^{\rm LQVI}U_c^\top(\theta_k-\theta^\star),
\end{align*}
where $\mu_k$ is a stochastic Bellman-difference selector.
\end{lemma}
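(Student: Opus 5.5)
The plan mirrors the tabular argument of \Cref{lem:projected-error-systems}, with the coefficient vector $c$ from \Cref{ass:constant-feature} playing the role of $\mathbf{1}$. I start from the exact coefficient-error recursion of \Cref{lem:linear-fa-switching-systems},
\begin{align*}
  \theta_{k+1}-\theta^\star=A_{\mu_k}^{\rm LQVI}(\theta_k-\theta^\star),
\end{align*}
which holds with a stochastic Bellman-difference selector $\mu_k$. Multiplying on the left by $U_c^\top$ then gives $U_c^\top(\theta_{k+1}-\theta^\star)=U_c^\top A_{\mu_k}^{\rm LQVI}(\theta_k-\theta^\star)$. What remains is to insert the projector $U_cU_c^\top$ between $A_{\mu_k}^{\rm LQVI}$ and the error.

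First, I note that $c\neq0$, since $\Phi c=\mathbf{1}\neq0$. Because $U_c$ has orthonormal columns spanning $\operatorname{span}\{c\}^\perp$, we have
\begin{align*}
  U_cU_c^\top=I_m-\frac{cc^\top}{\|c\|_2^2}.
\end{align*}
Second, I show that $c$ is a common eigenvector of every mode $A_\mu^{\rm LQVI}$. By \Cref{eq:feature-constant-invariant}, which holds for every stochastic policy $\mu$ and hence for the selector $\mu_k$,
\begin{align*}
  A_\mu^{\rm LQVI}c=(1-\alpha)c+\alpha\gamma c=\bigl(1-\alpha(1-\gamma)\bigr)c.
\end{align*}
Third, I combine these two facts:
\begin{align*}
  U_c^\top A_{\mu_k}^{\rm LQVI}(I_m-U_cU_c^\top)
  =\frac{1-\alpha(1-\gamma)}{\|c\|_2^2}\,U_c^\top c\,c^\top=0,
\end{align*}
using $U_c^\top c=0$. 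It follows that
\begin{align*}
  U_c^\top A_{\mu_k}^{\rm LQVI}(\theta_k-\theta^\star)
  =U_c^\top A_{\mu_k}^{\rm LQVI}U_cU_c^\top(\theta_k-\theta^\star)
  =\bar A_{\mu_k}^{\rm LQVI}U_c^\top(\theta_k-\theta^\star),
\end{align*}
where the last equality uses the definition of $\bar A_\mu^{\rm LQVI}$, which extends to stochastic $\mu$ by the same formula.

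No step is a genuine obstacle; the argument is a short algebraic check. The only point requiring care is that the invariance identity \Cref{eq:feature-constant-invariant} is needed for the \emph{stochastic} selector $\mu_k$, not just for deterministic policies. This is fine because its derivation uses only $P\Pi^\mu\mathbf{1}=\mathbf{1}$, which \Cref{lem:bellman_difference_selector} guarantees for every stochastic $\mu$. I would also remark that $c$ need not be normalized, which is why the projector formula carries the factor $\|c\|_2^{-2}$.
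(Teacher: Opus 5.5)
Your proposal is correct and follows essentially the same route as the paper, which likewise uses \Cref{eq:feature-constant-invariant} to get $A_\mu^{\rm LQVI}c=\bigl(1-\alpha(1-\gamma)\bigr)c$, writes $I-U_cU_c^\top=cc^\top/\|c\|_2^2$, deduces $U_c^\top A_{\mu_k}^{\rm LQVI}(I-U_cU_c^\top)=0$, and inserts $U_cU_c^\top$ into the coefficient recursion of \Cref{lem:linear-fa-switching-systems}. Your side remarks, that $c\neq0$ and that the invariance identity holds for stochastic selectors, are correct small clarifications that the paper leaves implicit.
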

\begin{proof}
For every stochastic policy $\mu$, \Cref{eq:feature-constant-invariant} gives
\begin{align*}
  A_\mu^{\rm LQVI}c=\bigl(1-\alpha(1-\gamma)\bigr)c.
\end{align*}
Since the columns of $U_c$ form an orthonormal basis of
$\operatorname{span}\{c\}^\perp$,
\begin{align*}
  I-U_cU_c^\top=\frac{cc^\top}{\left\lVert c\right\rVert_2^2}.
\end{align*}
Hence
\begin{align*}
  U_c^\top A_{\mu_k}^{\rm LQVI}(I-U_cU_c^\top)
  &=U_c^\top A_{\mu_k}^{\rm LQVI}
    \frac{cc^\top}{\left\lVert c\right\rVert_2^2}\\
  &=\bigl(1-\alpha(1-\gamma)\bigr)
    U_c^\top c\frac{c^\top}{\left\lVert c\right\rVert_2^2}=0.
\end{align*}
Using the coefficient error system from~\Cref{lem:linear-fa-switching-systems},
we obtain
\begin{align*}
  U_c^\top(\theta_{k+1}-\theta^\star)
  &=U_c^\top A_{\mu_k}^{\rm LQVI}(\theta_k-\theta^\star)\\
  &=U_c^\top A_{\mu_k}^{\rm LQVI}U_cU_c^\top(\theta_k-\theta^\star)\\
  &=\bar A_{\mu_k}^{\rm LQVI}U_c^\top(\theta_k-\theta^\star).
\end{align*}
\end{proof}

The heavy-ball analogue follows by applying the same projection after augmenting
with one lag.
\begin{lemma}
\label{lem:linear-fa-heavy-ball-projected-error-system}
Assume~\Cref{ass:constant-feature} holds. For the heavy-ball coefficient
recursion in~\Cref{lem:linear-fa-heavy-ball-switching-system}, the projected
heavy-ball error system is
\begin{equation*}
  \begin{bmatrix}
    U_c^\top(\theta_{k+1}-\theta^\star)\\
    U_c^\top(\theta_k-\theta^\star)
  \end{bmatrix}
  =\bar A_{\mu_k}^{\rm LHB}
  \begin{bmatrix}
    U_c^\top(\theta_k-\theta^\star)\\
    U_c^\top(\theta_{k-1}-\theta^\star)
  \end{bmatrix},
\end{equation*}
where $\mu_k$ is a stochastic Bellman-difference selector.
\end{lemma}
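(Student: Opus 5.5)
The plan mirrors the tabular argument in \Cref{lem:projected-hb-error-system}: first write the unprojected heavy-ball coefficient error in terms of the standard PQVI mode, then project each block with $U_c^\top$, and finally use the invariance of $\operatorname{span}\{c\}$ to replace the ambient mode by its projected counterpart. Set $e_j:=\theta_j-\theta^\star$ and $w_j:=U_c^\top e_j$.

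\emph{Step 1 (unprojected recursion).} From \Cref{lem:linear-fa-heavy-ball-switching-system}, the first block of the heavy-ball error system reads
\begin{align*}
  e_{k+1}=\bigl(A_{\mu_k}^{\rm LQVI}+\eta I_m\bigr)e_k-\eta e_{k-1}.
\end{align*}
This holds because $(1-\alpha+\eta)I_m+\alpha\gamma(\Phi^\top D\Phi)^{-1}\Phi^\top DP\Pi^{\mu_k}\Phi$ equals $A_{\mu_k}^{\rm LQVI}+\eta I_m$ by the definition of $A_\mu^{\rm LQVI}$ in \Cref{lem:linear-fa-switching-systems}.

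\emph{Step 2 (projection of the mode).} Apply $U_c^\top$ to this recursion. The key identity is $U_c^\top A_{\mu_k}^{\rm LQVI}e_k=\bar A_{\mu_k}^{\rm LQVI}w_k$. It comes from the computation in the proof of \Cref{lem:linear-fa-standard-projected-error-system}: by \Cref{eq:feature-constant-invariant} under \Cref{ass:constant-feature}, $A_\mu^{\rm LQVI}c=(1-\alpha(1-\gamma))c$. Together with $I-U_cU_c^\top=cc^\top/\|c\|_2^2$ and $U_c^\top c=0$, this gives $U_c^\top A_{\mu_k}^{\rm LQVI}(I-U_cU_c^\top)=0$. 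Hence $U_c^\top A_{\mu_k}^{\rm LQVI}=U_c^\top A_{\mu_k}^{\rm LQVI}U_cU_c^\top$.

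\emph{Step 3 (momentum terms).} The remaining terms are handled by linearity: $U_c^\top(\eta e_k-\eta e_{k-1})=\eta w_k-\eta w_{k-1}$. Combining with Step 2 yields
\begin{align*}
  w_{k+1}=\bigl(\bar A_{\mu_k}^{\rm LQVI}+\eta I_{m-1}\bigr)w_k-\eta w_{k-1}.
\end{align*}
Stacking this with the trivial lag identity $w_k=I_{m-1}w_k+0\cdot w_{k-1}$ gives exactly $\bar A_{\mu_k}^{\rm LHB}$ acting on $(w_k,w_{k-1})$. For stochastic $\mu_k$ we use the same formula, noting that $\bar A_\mu^{\rm LQVI}:=U_c^\top A_\mu^{\rm LQVI}U_c$.

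\emph{Main obstacle.} There is no real difficulty; the only delicate point is Step 2. There one must confirm that the invariance \Cref{eq:feature-constant-invariant} holds for stochastic $\mu$ and not only deterministic $\pi$. It does, because $P\Pi^\mu\mathbf{1}=\mathbf{1}$ by \Cref{lem:bellman_difference_selector}. One must also check that $U_c$ has orthonormal columns, which is what makes $U_cU_c^\top$ the orthogonal projector onto $\operatorname{span}\{c\}^\perp$.
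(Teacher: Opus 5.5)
Your proposal is correct and matches the paper's proof step for step. You rewrite the first block as $(A_{\mu_k}^{\rm LQVI}+\eta I_m)e_k-\eta e_{k-1}$, apply $U_c^\top$ using the identity $U_c^\top A_{\mu_k}^{\rm LQVI}(I-U_cU_c^\top)=0$ from \Cref{lem:linear-fa-standard-projected-error-system}, and stack the result with the lag identity. Your remark that \Cref{eq:feature-constant-invariant} holds for stochastic $\mu$, because $P\Pi^\mu\mathbf{1}=\mathbf{1}$, is a correct detail that the paper uses only implicitly.
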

\begin{proof}
The heavy-ball coefficient error system can be written as
\begin{align*}
  \theta_{k+1}-\theta^\star
  &=\left(A_{\mu_k}^{\rm LQVI}+\eta I_m\right)(\theta_k-\theta^\star)
    -\eta(\theta_{k-1}-\theta^\star).
\end{align*}
Applying $U_c^\top$ and using the projection identity proved in
\Cref{lem:linear-fa-standard-projected-error-system} yields
\begin{equation*}
\begin{aligned}
&U_c^\top(\theta_{k+1}-\theta^\star)\\
&\quad=U_c^\top A_{\mu_k}^{\rm LQVI}(\theta_k-\theta^\star)\\
&\qquad+\eta U_c^\top(\theta_k-\theta^\star)
  -\eta U_c^\top(\theta_{k-1}-\theta^\star)\\
&\quad=\bar A_{\mu_k}^{\rm LQVI}U_c^\top(\theta_k-\theta^\star)\\
&\qquad+\eta U_c^\top(\theta_k-\theta^\star)
  -\eta U_c^\top(\theta_{k-1}-\theta^\star)\\
&\quad=\left(\bar A_{\mu_k}^{\rm LQVI}+\eta I_{m-1}\right)
  U_c^\top(\theta_k-\theta^\star)\\
&\qquad-\eta U_c^\top(\theta_{k-1}-\theta^\star).
\end{aligned}
\end{equation*}
Stacking this identity with the lag identity gives the system with
$\bar A_{\mu_k}^{\rm LHB}$.
\end{proof}

As in the tabular case, stochastic-policy selectors are contained in the convex
hull of the deterministic projected families, and the JSR is unchanged by taking
this convex hull.

\begin{lemma}
\label{lem:linear-fa-standard-jsr-decomposition}
Suppose~\Cref{ass:constant-feature} holds. In the orthonormal basis
$[c/\left\lVert c \right\rVert_2\ \ U_c]$, every $A_\pi^{\rm LQVI}$ is block upper triangular:
\begin{equation}
\begin{aligned}
&\begin{bmatrix}
    (c/\|c\|_2)^\top\\
    U_c^\top
  \end{bmatrix}
  A_\pi^{\rm LQVI}
  \begin{bmatrix}
    c/\|c\|_2&U_c
  \end{bmatrix}\\
&\quad=
  \begin{bmatrix}
    1-\alpha(1-\gamma)&\|c\|_2^{-1}c^\top A_\pi^{\rm LQVI}U_c\\
    0&\bar A_\pi^{\rm LQVI}
  \end{bmatrix}.
\end{aligned}
\label{eq:linear-fa-standard-coordinate-block}
\end{equation}
Consequently,
\begin{align*}
  \rho(\mathcal A^{\rm LQVI})
  =\max\left\{\bigl(1-\alpha(1-\gamma)\bigr),\rho(\bar{\mathcal A}^{\rm LQVI})\right\}.
\end{align*}
\end{lemma}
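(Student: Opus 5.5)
The plan is to mirror the tabular argument of \Cref{lem:standard-constant-projected-block-form} and \Cref{lem:qvi-common-eigenvector-projected-jsr-decomposition}, with $q=\mathbf{1}/\sqrt n$ replaced by $\hat c:=c/\|c\|_2$ and $U$ replaced by $U_c$. First I would note that $c\neq0$, since $\Phi c=\mathbf{1}\neq0$, so $\hat c$ is well defined. Then $S_c:=[\hat c\ U_c]$ is an orthogonal $m\times m$ matrix, because $U_c$ has orthonormal columns spanning $\operatorname{span}\{c\}^\perp$.

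Next I would compute the blocks of $S_c^\top A_\pi^{\rm LQVI}S_c$. The key input is the invariance identity \Cref{eq:feature-constant-invariant}, which gives
\begin{align*}
  A_\pi^{\rm LQVI}c=(1-\alpha)c+\alpha\gamma c=\bigl(1-\alpha(1-\gamma)\bigr)c.
\end{align*}
Since $\hat c^\top\hat c=1$, the $(1,1)$ block is $1-\alpha(1-\gamma)$. Since $U_c^\top c=0$, the $(2,1)$ block $U_c^\top A_\pi^{\rm LQVI}\hat c$ vanishes. The $(1,2)$ block is $\|c\|_2^{-1}c^\top A_\pi^{\rm LQVI}U_c$. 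The $(2,2)$ block is $\bar A_\pi^{\rm LQVI}$ by definition. Together these give \Cref{eq:linear-fa-standard-coordinate-block}.

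For the JSR identity I would use that the JSR is invariant under a fixed similarity. Indeed, $S_c^\top(\prod_j A_{\pi_j}^{\rm LQVI})S_c=\prod_j(S_c^\top A_{\pi_j}^{\rm LQVI}S_c)$ because $S_cS_c^\top=I$, and the spectral norm is unchanged by orthogonal conjugation. So $\rho(\mathcal A^{\rm LQVI})$ equals the JSR of the transformed family. This family is block upper triangular with constant scalar diagonal block $1-\alpha(1-\gamma)$, and the JSR of that singleton family is its absolute value, which equals $1-\alpha(1-\gamma)$ since $0<\alpha\le1$ (\Cref{ass:qvi-alpha-stability}). Its other diagonal block family is $\bar{\mathcal A}^{\rm LQVI}$. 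Applying \Cref{lem:block-triangular-jsr} then gives the stated maximum.

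No step is a real obstacle. The only point needing care is the orthogonality of $S_c$: $U_c$ was chosen orthonormal but $c$ was not normalized, so the normalization by $\|c\|_2$ must be tracked in the off-diagonal block, and $c\neq0$ must be checked. Positivity of $1-\alpha(1-\gamma)$ is needed only to drop the absolute value.
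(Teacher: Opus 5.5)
Your proposal is correct and follows essentially the same route as the paper. Both arguments use \Cref{eq:feature-constant-invariant} to get $A_\pi^{\rm LQVI}c=\bigl(1-\alpha(1-\gamma)\bigr)c$, read off the four blocks using $U_c^\top c=0$, and then apply \Cref{lem:block-triangular-jsr}, with the scalar block's JSR equal to $1-\alpha(1-\gamma)>0$. Your explicit checks (that $c\neq0$, that $[\hat c\ U_c]$ is orthogonal, and that the JSR is invariant under this fixed similarity) are details the paper leaves implicit; they do not change the argument.
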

\begin{proof}
By~\Cref{eq:feature-constant-invariant}, for every deterministic policy $\pi$,
\begin{align*}
  (\Phi^\top D\Phi)^{-1}\Phi^\top D P\Pi^\pi\Phi c=c.
\end{align*}
Therefore
\begin{align*}
  A_\pi^{\rm LQVI}c
  &=(1-\alpha)c
    +\alpha\gamma(\Phi^\top D\Phi)^{-1}\Phi^\top D P\Pi^\pi\Phi c\\
  &=\bigl(1-\alpha(1-\gamma)\bigr)c.
\end{align*}
Dividing by $\left\lVert c \right\rVert_2$ gives
\begin{align*}
  A_\pi^{\rm LQVI}\frac{c}{\left\lVert c \right\rVert_2}
  =\bigl(1-\alpha(1-\gamma)\bigr)\frac{c}{\left\lVert c \right\rVert_2}.
\end{align*}
Since $U_c^\top c=0$, the lower-left block in the coordinate matrix in~\Cref{eq:linear-fa-standard-coordinate-block} is
\begin{align*}
  U_c^\top A_\pi^{\rm LQVI}\frac{c}{\left\lVert c \right\rVert_2}
  =\bigl(1-\alpha(1-\gamma)\bigr)U_c^\top\frac{c}{\left\lVert c \right\rVert_2}=0.
\end{align*}
The upper-left block is
\begin{align*}
  \left(\frac{c}{\left\lVert c \right\rVert_2}\right)^\top
  A_\pi^{\rm LQVI}\frac{c}{\left\lVert c \right\rVert_2}
  =1-\alpha(1-\gamma),
\end{align*}
and the lower-right block is exactly
\begin{align*}
  U_c^\top A_\pi^{\rm LQVI}U_c=\bar A_\pi^{\rm LQVI}.
\end{align*}
This proves the block upper triangular form. Products of the transformed matrices
remain block upper triangular. For any word $(\pi_0,\ldots,\pi_{k-1})$, the two
diagonal blocks are
\begin{align*}
  \bigl(1-\alpha(1-\gamma)\bigr)^k
  \quad\text{and}\quad
  \bar A_{\pi_{k-1}}^{\rm LQVI}\cdots\bar A_{\pi_0}^{\rm LQVI}.
\end{align*}
Thus the block triangular JSR decomposition in~\Cref{lem:block-triangular-jsr}
gives the maximum of the scalar-block JSR and the projected-block JSR. The
standing discounted and relaxation assumptions make the scalar-block JSR equal to
$1-\alpha(1-\gamma)$, and the claimed equality follows.
\end{proof}

This decomposition gives an exact criterion for when the full standard
feature-space JSR is determined by the constant block.

\begin{lemma}
\label{lem:linear-fa-standard-jsr-equality}
Suppose~\Cref{ass:constant-feature} holds. Then
\begin{align*}
  \rho(\mathcal A^{\rm LQVI})=1-\alpha(1-\gamma)
\end{align*}
if and only if
\begin{align*}
  \rho(\bar{\mathcal A}^{\rm LQVI})\le 1-\alpha(1-\gamma).
\end{align*}
\end{lemma}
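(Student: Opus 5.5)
The plan is to read the equivalence off the exact decomposition in \Cref{lem:linear-fa-standard-jsr-decomposition}. Under \Cref{ass:constant-feature} that lemma already gives
\begin{align*}
  \rho(\mathcal A^{\rm LQVI})
  =\max\left\{1-\alpha(1-\gamma),\rho(\bar{\mathcal A}^{\rm LQVI})\right\}.
\end{align*}
So both directions reduce to an elementary property of the maximum of two nonnegative reals: for $a,b\ge0$, $\max\{a,b\}=a$ holds if and only if $b\le a$. Here $a=1-\alpha(1-\gamma)$ and $b=\rho(\bar{\mathcal A}^{\rm LQVI})$.

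For the ``if'' direction, assume $\rho(\bar{\mathcal A}^{\rm LQVI})\le 1-\alpha(1-\gamma)$. The maximum is then attained by the scalar constant-block term, and the displayed identity yields $\rho(\mathcal A^{\rm LQVI})=1-\alpha(1-\gamma)$.

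For the ``only if'' direction, assume $\rho(\mathcal A^{\rm LQVI})=1-\alpha(1-\gamma)$. The displayed identity gives
\begin{align*}
  \rho(\bar{\mathcal A}^{\rm LQVI})
  \le\max\left\{1-\alpha(1-\gamma),\rho(\bar{\mathcal A}^{\rm LQVI})\right\}
  =\rho(\mathcal A^{\rm LQVI})
  =1-\alpha(1-\gamma),
\end{align*}
which is the claimed inequality.

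There is no real obstacle, since the substantive work is in \Cref{lem:linear-fa-standard-jsr-decomposition}. One point should be made explicit there: the scalar block contributes exactly $1-\alpha(1-\gamma)$ to the JSR. This holds because every word produces the upper-left diagonal entry $\bigl(1-\alpha(1-\gamma)\bigr)^k$, and $1-\alpha(1-\gamma)>0$ by \Cref{ass:qvi-alpha-stability} together with $\gamma\in(0,1)$.

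We should also remark that, unlike the tabular case (\Cref{prop:standard_q_jsr}), there is no nonnegativity or row-stochastic argument in feature space. Hence the inequality $\rho(\bar{\mathcal A}^{\rm LQVI})\le 1-\alpha(1-\gamma)$ need not hold automatically, and that is why the statement is an equivalence rather than an identity.
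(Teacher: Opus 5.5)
Your proposal is correct and follows the paper's own proof: both read the equivalence directly off the decomposition $\rho(\mathcal A^{\rm LQVI})=\max\{1-\alpha(1-\gamma),\rho(\bar{\mathcal A}^{\rm LQVI})\}$ from \Cref{lem:linear-fa-standard-jsr-decomposition}, using the elementary fact that $\max\{a,b\}=a$ if and only if $b\le a$. Your added remark that the scalar block has JSR exactly $1-\alpha(1-\gamma)>0$ makes explicit a point the paper only states in passing.
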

\begin{proof}
By~\Cref{lem:linear-fa-standard-jsr-decomposition},
\begin{align*}
  \rho(\mathcal A^{\rm LQVI})
  =\max\left\{1-\alpha(1-\gamma),\rho(\bar{\mathcal A}^{\rm LQVI})\right\}.
\end{align*}
The equality above holds exactly when the projected block is no larger than
the scalar constant block, which gives the stated equivalence.
\end{proof}

The next lemma states the strict-gap version needed in the final approximation
certificate.

\begin{lemma}
\label{lem:linear-fa-strict-gap-implies-standard-jsr}
Suppose~\Cref{ass:constant-feature} holds. If
\begin{align*}
  \rho(\bar{\mathcal A}^{\rm LQVI})<\rho(\mathcal A^{\rm LQVI}),
\end{align*}
then
\begin{align*}
  \rho(\mathcal A^{\rm LQVI})=1-\alpha(1-\gamma).
\end{align*}
\end{lemma}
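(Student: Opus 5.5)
The plan is a short case analysis built on the exact decomposition in \Cref{lem:linear-fa-standard-jsr-decomposition}. Under \Cref{ass:constant-feature}, that lemma gives
\begin{align*}
  \rho(\mathcal A^{\rm LQVI})
  =\max\left\{1-\alpha(1-\gamma),\rho(\bar{\mathcal A}^{\rm LQVI})\right\}.
\end{align*}
The maximum is attained by one of its two arguments, so I would split according to which one is the larger.

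In the first case, $\rho(\bar{\mathcal A}^{\rm LQVI})\le 1-\alpha(1-\gamma)$. The maximum then equals $1-\alpha(1-\gamma)$, and the claim follows at once; this is just the ``if'' direction of \Cref{lem:linear-fa-standard-jsr-equality}.

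In the second case, $\rho(\bar{\mathcal A}^{\rm LQVI})>1-\alpha(1-\gamma)$. The decomposition then forces $\rho(\mathcal A^{\rm LQVI})=\rho(\bar{\mathcal A}^{\rm LQVI})$. This contradicts the hypothesis $\rho(\bar{\mathcal A}^{\rm LQVI})<\rho(\mathcal A^{\rm LQVI})$, so the second case cannot occur.

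Alternatively, and more directly: the strict inequality $\rho(\bar{\mathcal A}^{\rm LQVI})<\rho(\mathcal A^{\rm LQVI})$ says that the projected term is not the one attaining the maximum. Hence the maximum must be attained by the scalar block, which gives $\rho(\mathcal A^{\rm LQVI})=1-\alpha(1-\gamma)$. The same argument also shows $\rho(\bar{\mathcal A}^{\rm LQVI})<1-\alpha(1-\gamma)$.

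I do not expect a genuine obstacle. The only care needed is to confirm that the block-triangular decomposition of \Cref{lem:linear-fa-standard-jsr-decomposition} is exact, that is, an equality and not merely an upper bound. This holds by \Cref{lem:block-triangular-jsr} applied in the orthonormal basis $[c/\|c\|_2\ \ U_c]$: a similarity transformation by an orthogonal matrix does not change the JSR, and the scalar diagonal block is the constant $1-\alpha(1-\gamma)$ for every mode.
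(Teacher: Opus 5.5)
Your proof is correct and is essentially the paper's argument: the paper also invokes the exact decomposition of \Cref{lem:linear-fa-standard-jsr-decomposition} and observes that the strict inequality rules out the projected term as the one attaining the maximum. Your case split and your remark that the orthogonal change of basis $[c/\|c\|_2\ \ U_c]$ leaves the JSR unchanged only make explicit what the paper uses implicitly.
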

\begin{proof}
By~\Cref{lem:linear-fa-standard-jsr-decomposition},
\begin{align*}
  \rho(\mathcal A^{\rm LQVI})
  =\max\left\{1-\alpha(1-\gamma),\rho(\bar{\mathcal A}^{\rm LQVI})\right\}.
\end{align*}
If $\rho(\bar{\mathcal A}^{\rm LQVI})<\rho(\mathcal A^{\rm LQVI})$, the maximum
cannot be attained by the projected term. Therefore it must be attained by
$1-\alpha(1-\gamma)$.
\end{proof}

The final theorem transfers the tabular constant/projection argument to the
projected LFA setting.

\begin{theorem}
\label{thm:linear-fa-jsr-improvement}
Suppose~\Cref{ass:constant-feature} holds. If
\begin{equation}
  \rho(\bar{\mathcal A}^{\rm LQVI})<\rho(\mathcal A^{\rm LQVI}),
  \label{eq:linear-fa-projected-gap}
\end{equation}
then there exists $\eta_0>0$ such that every
\begin{equation}
  0<\eta<\min\left\{\bigl(1-\alpha(1-\gamma)\bigr)^2,\eta_0\right\}
  \label{eq:linear-fa-eta-condition}
\end{equation}
satisfies
\begin{equation}
  \rho(\bar{\mathcal A}^{\rm LHB})<1-\alpha(1-\gamma)
  \label{eq:linear-fa-projected-jsr-less}
\end{equation}
and
\begin{equation}
  \rho(\mathcal A^{\rm LHB})
  <\rho(\mathcal A^{\rm LQVI})=\bigl(1-\alpha(1-\gamma)\bigr).
  \label{eq:linear-fa-jsr-less}
\end{equation}
\end{theorem}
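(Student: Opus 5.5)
The argument mirrors the proof of \Cref{thm:hb-global-jsr-improvement}, carried out in feature coordinates. First, \Cref{lem:linear-fa-strict-gap-implies-standard-jsr} turns the hypothesis \Cref{eq:linear-fa-projected-gap} into $\rho(\mathcal A^{\rm LQVI})=1-\alpha(1-\gamma)$. This is the equality asserted in \Cref{eq:linear-fa-jsr-less}. The hypothesis then reads $\rho(\bar{\mathcal A}^{\rm LQVI})<1-\alpha(1-\gamma)$.

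Next, I establish the block-triangular decomposition for the heavy-ball family. Let $\hat c:=c/\|c\|_2$. I use the augmented orthogonal change of coordinates with columns $(\hat c,0)$, $(0,\hat c)$, $(U_c,0)$, $(0,U_c)$, exactly as in the proof of \Cref{lem:hb-common-eigenvector-projected-jsr-decomposition}. Write $A_\pi^{\rm LHB}=\begin{bmatrix}A_\pi^{\rm LQVI}+\eta I_m&-\eta I_m\\ I_m&0\end{bmatrix}$. The eigen-relation $A_\pi^{\rm LQVI}c=(1-\alpha(1-\gamma))c$ from \Cref{eq:feature-constant-invariant}, together with $U_c^\top c=0$, shows that each $A_\pi^{\rm LHB}$ becomes block upper triangular. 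The diagonal blocks are $C_\eta^-$ and $\bar A_\pi^{\rm LHB}$. The off-diagonal block is $\begin{bmatrix}\hat c^\top A_\pi^{\rm LQVI}U_c&0\\0&0\end{bmatrix}$. \Cref{lem:block-triangular-jsr} then gives
\begin{align*}
\rho(\mathcal A^{\rm LHB})=\max\left\{\rho(C_\eta^-),\rho(\bar{\mathcal A}^{\rm LHB})\right\}.
\end{align*}

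With the decomposition in hand, the two blocks are handled separately. For the companion block, $C_\eta^-$ is the same $2\times2$ matrix as in the tabular case, since it depends only on $1-\alpha(1-\gamma)$ and $\eta$. The Jury argument of \Cref{prop:common_eigenvector_acceleration} therefore applies verbatim, with $1-\alpha(1-\gamma)$ in place of $\rho(\mathcal A^{\rm QVI})$. It gives $\rho(C_\eta^-)<1-\alpha(1-\gamma)$ whenever $0<\eta<(1-\alpha(1-\gamma))^2$. For the projected block, I need the analogue of \Cref{lem:projected-hb-zero-momentum}. At $\eta=0$ each $\bar A_\pi^{\rm LHB}$ equals $\begin{bmatrix}\bar A_\pi^{\rm LQVI}&0\\ I&0\end{bmatrix}$. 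The product argument of \Cref{lem:full-hb-zero-momentum} then goes through unchanged: the lower bound comes from compressing to the first block, and the upper bound from \Cref{lem:block-column-norm-bound} plus the $2^{1/k}$ and $(k-1)/k$ limits. It shows that $\rho(\bar{\mathcal A}^{\rm LHB})|_{\eta=0}=\rho(\bar{\mathcal A}^{\rm LQVI})<1-\alpha(1-\gamma)$.

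The entries of the finite family $\bar{\mathcal A}^{\rm LHB}$ are affine in $\eta$. Continuity of the JSR of finite families, as used in \Cref{lem:projected-hb-jsr-continuity}, therefore gives $\eta_0>0$ such that $\rho(\bar{\mathcal A}^{\rm LHB})<1-\alpha(1-\gamma)$ for all $\eta\in[0,\eta_0)$. This is \Cref{eq:linear-fa-projected-jsr-less}. Combining it with the companion bound in the decomposition yields \Cref{eq:linear-fa-jsr-less} under \Cref{eq:linear-fa-eta-condition}.

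The main obstacle is verifying the augmented block-triangular form, specifically that the lower-left blocks vanish. This vanishing depends on $c$ being a genuine common eigenvector of every feature-space mode, which is exactly what \Cref{ass:constant-feature} and \Cref{eq:feature-constant-invariant} supply. Orthogonality of $[\hat c\ U_c]$ is automatic from the construction, so the change of coordinates causes no difficulty. The remaining steps reuse the tabular lemmas.
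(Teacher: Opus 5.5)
Your proposal is correct and follows essentially the same route as the paper. Both arguments use the strict-gap lemma to fix $\rho(\mathcal A^{\rm LQVI})=1-\alpha(1-\gamma)$, then obtain $\eta_0$ from the identity $\rho(\bar{\mathcal A}^{\rm LHB})=\rho(\bar{\mathcal A}^{\rm LQVI})$ at $\eta=0$ together with JSR continuity. They conclude through the augmented block-triangular form with diagonal blocks $C_\eta^-$ and $\bar A_\pi^{\rm LHB}$, combined with the Jury bound. The only difference is that you write out two steps the paper merely asserts: the vanishing of the lower-left blocks in the $[\hat c\ U_c]$ coordinates, and the zero-momentum JSR identity for the projected family.
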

\begin{proof}
By~\Cref{lem:linear-fa-strict-gap-implies-standard-jsr},
\begin{align*}
  \rho(\mathcal A^{\rm LQVI})=\bigl(1-\alpha(1-\gamma)\bigr).
\end{align*}
At $\eta=0$, the projected heavy-ball matrices are
\begin{align*}
  \bar A_\pi^{\rm LHB}=
  \begin{bmatrix}
    \bar A_\pi^{\rm LQVI}&0\\
    I_{m-1}&0
  \end{bmatrix},
\end{align*}
and the JSR equals $\rho(\bar{\mathcal A}^{\rm LQVI})$. Since the JSR of a finite
family depends continuously on the matrices, the strict gap in
\Cref{eq:linear-fa-projected-gap} persists for all sufficiently small positive
$\eta$. Thus there exists $\eta_0>0$ such that
\Cref{eq:linear-fa-projected-jsr-less} holds whenever $0<\eta<\eta_0$.

In the augmented basis induced by $[c/\left\lVert c \right\rVert_2\ \ U_c]$, each
$A_\pi^{\rm LHB}$ is block upper triangular. One diagonal block is the same
common-eigenvector companion matrix $C_\eta^-$ from~\Cref{eq:common-eigenvector-companion}; the other
diagonal block belongs to $\bar{\mathcal A}^{\rm LHB}$. The interval
\Cref{eq:linear-fa-eta-condition} gives
$\rho(C_\eta^-)<1-\alpha(1-\gamma)$ by~\Cref{prop:common_eigenvector_acceleration},
while~\Cref{eq:linear-fa-projected-jsr-less} gives the same strict bound on the
projected feature-space block. By the block triangular JSR decomposition in~\Cref{lem:block-triangular-jsr},
the JSR of this block upper triangular SLS family is the maximum of the
JSRs of its diagonal block families. Therefore \Cref{eq:linear-fa-jsr-less}
follows.
\end{proof}

\end{document}